\newtheorem{theorem}{Theorem}[section]
\newtheorem{lemma}[theorem]{Lemma}
\newtheorem{proposition}[theorem]{Proposition}
\newtheorem{corollary}[theorem]{Corollary}
\newtheorem{conjecture}[theorem]{Conjecture}
\theoremstyle{definition}
\newtheorem{remark}[theorem]{Remark}
\newtheorem{definition}[theorem]{Definition}
\numberwithin{equation}{section} \numberwithin{figure}{section}
\DeclareMathOperator{\Aut}{Aut}
\DeclareMathOperator{\Hol}{Hol}
\DeclareMathOperator{\Com}{Com}
\DeclareMathOperator{\eucl}{eucl}
\DeclareMathOperator{\codim}{codim}
\DeclareMathOperator{\Id}{Id}
\DeclareMathOperator{\Degen}{Degen}
\DeclareMathOperator{\Sur}{Sur}
\newcommand\BB{\mathbb{B}}
\newcommand\NN{\mathbb{N}}
\newcommand\CC{\mathbb{C}}
\def\P{\mathbb{P}}
\def\bydef{:=}
\definecolor{orange}{rgb}{1,0.5,0}
\title{Complex-analytic intermediate hyperbolicity, and finiteness properties}
 \author{Antoine Etesse}
\address{Aix Marseille Univ, CNRS, Centrale Marseille, I2M, Marseille, France}
\email{antoine.etesse@univ-amu.fr}
\subjclass[2010]
{14G99 
(11G35,  
14G05,  
32Q45)} 
\keywords{Hyperbolicity, Moduli spaces of maps, Ampleness, Positivity}
\begin{document}

\maketitle

\thispagestyle{empty}

\begin{abstract}
Motivated by the finiteness of the set of automorphisms $\Aut(X)$ of a projective manifold $X$, and by Kobayashi-Ochiai's conjecture that a projective manifold $\dim(X)$-analytically hyperbolic (also known as strongly measure hyperbolic) should be of general type, we investigate the finiteness properties of $\Aut(X)$ for a complex manifold satisfying a (pseudo-) intermediate hyperbolicity property. We first show that a complex manifold $X$ which is $(\dim(X)-1)$-analytically hyperbolic has indeed finite automorphisms group. We then obtain a similar statement for a pseudo-$(\dim(X)-1)$-analytically hyperbolic, strongly measure hyperbolic projective manifold $X$, under an additional hypothesis on the size of the degeneracy set. Some of the properties used during the proofs lead us to introduce a notion of intermediate Picard hyperbolicity, which we last discuss.
\end{abstract}

\renewcommand{\abstractname}{Acknowledgements}
\begin{abstract}
 I would like to thank my supervisor Erwan Rousseau as well as Ariyan Javanpeykar for their useful remarks, advices, and corrections. Specifically, the notion of \emph{intermediate} Picard hyperbolicity emerged from various discussions we had together.
\end{abstract}

 \section{Introduction} 
 It is a standard result that a hyperbolic (in any sense of the term) compact manifold $X$ satisfies the following strong finiteness result: for any compact manifold $Y$, the set of surjective holomorphic maps $\Sur(Y,X)$ is finite. (See \cite{Kobayashi}[Chapter 6, Section 6]; the result is actually true for $X, Y$ complex spaces). In particular, the set of automorphisms $\Aut(X)$ is finite. In a different direction, a compact complex hyperbolic manifold $X$ satisfies the following remarkable extension property: for any compact complex manifold $Y$, and any $A \subsetneq Y$ closed complex subspace of $Y$, every holomorphic map
 \[
 f: Y \setminus A \rightarrow X
 \]
 extends holomorphically through $A$. (See \cite{Kobayashi}[Chapter 6, Section 3]; the result is actually valid for $X$ a complex space).
 
 In \cite{eisenman1970intrinsic}, Eisenmann introduced the so-called \emph{Kobayashi-Einsenmann pseudo-metrics}, leading to weaker forms of hyperbolicity, namely \emph{$\ell$- analytic hyperbolicity}, $1 \leq \ell \leq \dim(X)$, that we sometimes refer to as ``intermediate hyperbolicity``. The case $\ell=1$ leads to the usual notion of hyperbolicity, and  a special focus was first given to the weakest form (i.e. for $\ell=\dim(X)$), often called \emph{(strong)-measure hyperbolicity} (see Section \ref{section: basics} for definitions). Restricting ourselves to the realm of projective manifolds, it was proved by Kobayashi and Ochiai that a projective manifold $X$ of general type is (strongly-)measure hyperbolic (see e.g. \cite{Kobayashi}[Chapter 7, Section 4]), and they further conjectured that the converse should be true. 
 
 The finiteness property  that we described above, valid for hyperbolic manifolds, turns out to be satisfied by projective manifolds of general type as well (See \cite{Kobayashi}[Chapter 7, Section 6]); as for the property of extension, the following weaker form is valid for projective manifolds of general type: 
 \begin{center}
(*) \ \ \ \ 
if $\dim(Y) \geq \dim(X)$, then any \emph{non-degenerate} holomorphic map $f: Y \setminus A \rightarrow X$ extends \emph{meromorphically} through $A$,
 \end{center}
 where $f$ is said to be non-degenerate if its differential is of a maximal rank at some point (see Section \ref{subs: pseudo} where we recall the definition of a meromorphic map).
 
 In light of the conjecture we mentioned above, it is natural to expect that such finiteness and extension properties should be satisfied by measure hyperbolic projective manifold. As such kind of properties are actually satisfied by hyperbolic manifold, not necessarily projective, we may even be tempted to drop the projective hypothesis. In any case, this leads naturally to the following two conjectures. (See also \cite{graham1985}).
 \begin{conjecture}[Finiteness property]
 \label{conjecture: finiteness}
 Let $X$ be a projective manifold $\dim(X)$-analytically hyperbolic. Then $\Aut(X)$ is finite.
 \end{conjecture}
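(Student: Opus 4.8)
The plan is to attack the statement through the structure of the automorphism group together with the functoriality of the Kobayashi--Eisenman intrinsic measure, and to isolate the precise point at which $\ell=\dim(X)$ (rather than $\ell=\dim(X)-1$) makes the argument break down. Write $n=\dim(X)$. Since $X$ is projective, hence compact, $\Aut(X)$ is a complex Lie group, and by Matsusaka's theorem its identity component $\Aut^0(X)$ is a connected algebraic group. Finiteness of $\Aut(X)$ is thus equivalent to the conjunction of two assertions: (i) $\Aut^0(X)$ is trivial, i.e. $X$ carries no nonzero global holomorphic vector field; and (ii) the discrete group of components $\Aut(X)/\Aut^0(X)$ is finite. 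I would treat these separately, with (i) as the heart of the matter.

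For (i), suppose $\Aut^0(X)\neq\{1\}$. By Chevalley's theorem there is an extension $1\to L\to\Aut^0(X)\to B\to 1$ with $L$ linear algebraic and $B$ an abelian variety, and at least one factor is nontrivial. If $L\neq\{1\}$ then $L$ contains a copy of $\Gm$ or $\Ga$, whose nontrivial orbits have rational closures and produce a nonconstant $\PP^1\to X$; if $B\neq\{1\}$ then the translation orbits are positive-dimensional subtori, each carrying nonconstant entire curves $\CC\to X$ arising from one-parameter subgroups. In both cases the action sweeps out, over a positive-dimensional base, a subvariety foliated by images of $\Ga=\CC$ or of complex tori. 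The governing principle one would like to invoke is that such a foliated family degenerates the intrinsic Kobayashi--Eisenman pseudometric in the corresponding dimension.

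Here lies the main obstacle, and the reason the statement is only a conjecture. Measure hyperbolicity is $n$-analytic hyperbolicity: it controls only the top-dimensional intrinsic measure $\Psi_X$, asserting $\Psi_X>0$. Biholomorphisms preserve $\Psi_X$ exactly (apply the measure-decreasing functoriality to $f$ and to $f^{-1}$), so $\Aut(X)$ embeds into the $\Psi_X$-preserving transformations of the finite-measure space $(X,\Psi_X)$; but the degenerate subvarieties produced above have dimension $e\le n-1$, and the $n$-dimensional measure is simply blind to degeneracy of the $e$-dimensional pseudometric when $e<n$. This is exactly where the argument that succeeds for $(\dim(X)-1)$-analytic hyperbolicity --- in which one genuinely controls $(n-1)$-dimensional infinitesimal volumes and can thereby contradict the existence of such orbits --- has no counterpart. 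Bridging the gap would require either propagating positivity from the top-dimensional measure down to lower-dimensional pseudometrics, or a global sweeping argument showing that a positive-dimensional orbit foliation forces an honest $n$-dimensional volume degeneration (for instance, exhibiting a dominant map from $\CC^{e}\times Z$ that collapses $\Psi_X$ on an open set). I do not expect (i) to follow formally from $n$-hyperbolicity alone; this is the crux.

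For (ii), granting (i) so that $\Aut(X)$ is discrete, the volume-decreasing inequality $f^{*}\Psi_X\le\Psi_X$ together with $0<\int_X\Psi_X<\infty$ already forces every surjective self-map to have degree one, whence $\Sur(X,X)=\Aut(X)$; finiteness of this discrete group is nevertheless a separate matter. One would bound the components through the linear action of $\Aut(X)$ on $\HH^2(X,\ZZ)$ (or on the Néron--Severi lattice), whose image is finite once a polarization is preserved and whose kernel is finite under the triviality of $\Aut^0(X)$. Supplying both finitenesses from measure hyperbolicity alone, rather than from ampleness of $K_X$ as in the general-type case, is the residual difficulty. In sum, the natural strategy reduces the conjecture to the single structural input (i), and it is precisely the passage from $n$ to $n-1$ in the hyperbolicity order that my plan cannot currently supply.
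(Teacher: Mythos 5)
The statement you were asked about is Conjecture~\ref{conjecture: finiteness} of the paper: it is left \emph{open} there, and the paper only proves weaker forms of it (Theorem~\ref{thm: mainthm1} under $(\dim X-1)$-analytic hyperbolicity, and Theorems~\ref{thm: mainthm2}, \ref{thm: mainthm3} under $\dim X$-analytic hyperbolicity plus auxiliary pseudo-$(\dim X-1)$ or Picard hypotheses). So you were right not to claim a complete proof. However, your diagnosis of where the difficulty sits is inverted, and this matters. Your step (i) --- triviality of $\Aut^0(X)$, equivalently the non-existence of a nonzero holomorphic vector field --- is precisely the part that \emph{does} follow from $\dim(X)$-analytic hyperbolicity, and by an argument far more direct than Chevalley decompositions: if $v$ is a nonzero holomorphic vector field on the compact manifold $X$, its flow, together with a transverse $(\dim X-1)$-dimensional ball at a point where $v\neq 0$, produces a non-degenerate holomorphic map $F\colon \CC\times\BB^{\dim X-1}\to X$; the pseudo-metric $e_{\dim X}$ degenerates along the image of every non-degenerate point of such a map, and that image has nonempty interior, contradicting even pseudo$^{top}$-$\dim X$-analytic hyperbolicity. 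This is exactly Lemma~\ref{lem:discreteness_measure} of the paper, stated for arbitrary compact complex manifolds and using nothing about dimension $n-1$. In particular your assertion that ``the $n$-dimensional measure is simply blind'' to the orbits of a one-parameter group is wrong: the flow does not merely trace out an $e$-dimensional orbit with $e<n$; pushed around a transverse ball it fills an open set, which $e_n$ does see.

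The genuinely open half of the conjecture is your step (ii), which you propose to dispatch by letting $\Aut(X)$ act on $\HH^2(X,\ZZ)$ or on the N\'eron--Severi lattice. That route has a hole: an automorphism has no reason to preserve a polarization, and the image of $\Aut(X)$ in $\mathrm{GL}(\NS(X))$ can a priori be infinite (this is what happens for K3 surfaces and abelian varieties; those are of course not measure hyperbolic, but your argument never invokes the hyperbolicity hypothesis at this point, so it cannot exclude analogous behaviour). What the paper actually does, and what genuinely requires $(\dim X-1)$-analytic hyperbolicity, is to prove \emph{compactness} of the discrete group $\Aut(X)$ by an Arzel\`a--Ascoli argument: one must bound $\max_{x\in X}\|(df_k)_x\|$ for a sequence $f_k\in\Aut(X)$, and the key estimates (Lemmas~\ref{lemma: basic lemma1}, \ref{lemma: basic linear algebra} and \ref{lemma: basic lemma2}) convert an exploding first derivative into an exploding $(n-1)\times(n-1)$ minor, i.e.\ into a degeneration of $e_{n-1}$ --- not of $e_n$, because $|\det(df_k)|$ stays bounded above and below for automorphisms and so $e_n$ gives no contradiction. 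Thus the passage from $n$ to $n-1$ obstructs the compactness half, not the discreteness half; any serious attack on the conjecture must extract the equicontinuity bound (equivalently, finiteness of the component group) from $e_n$ alone, and that is the missing idea your plan does not supply.
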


\begin{conjecture}[Extension property]
\label{conjecture: extension}
Let $X$ be a projective $\dim(X)$-analytically hyperbolic manifold,  let $Y$ be a projective manifold of dimension $\dim(Y)\geq \dim(X)$, and let $H \subset Y$ be an hypersurface. Then any non-degenerate holomorphic map
\[
f: Y \setminus H \rightarrow X
\]
extends to a meromorphic map $f: Y \dashrightarrow X$.
\end{conjecture}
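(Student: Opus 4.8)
The plan is to realize the extension by extending the closure of the graph $\Graph(f)\subset (Y\setminus H)\times X$ across $H$. Set $n=\dim(X)$ and $m=\dim(Y)\ge n$, and equip $Y\times X$ with $\omega=\pi_Y^*\omega_Y+\pi_X^*\omega_X$ for Kähler forms $\omega_X,\omega_Y$. If the closure $\overline{\Graph(f)}$ in $Y\times X$ is analytic, then its first projection is automatically proper (because $X$ is projective) and an isomorphism over $Y\setminus H$, hence birational; so $\overline{\Graph(f)}$ is the graph of a meromorphic map $Y\dashrightarrow X$, which is exactly what is wanted. Since $H\times X$ has codimension one in $Y\times X$, the Remmert--Stein theorem does not apply, and the analyticity of the closure must instead come from Bishop's extension theorem, whose hypothesis is that $\Graph(f)$ has locally finite volume near $H\times X$.

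Everything therefore reduces to a volume estimate. The graph volume is $\int_{Y\setminus H}(\omega_Y+f^*\omega_X)^m/m!$, and since $f^*\omega_X$ has rank at most $n$ the expansion only involves the mixed integrals $\int(f^*\omega_X)^k\wedge\omega_Y^{m-k}$ for $0\le k\le n$. The extreme terms are harmless: $k=0$ gives $\int_Y\omega_Y^m<\infty$, while the top term $k=n$ is controlled by hyperbolicity. Indeed, $n$-analytic hyperbolicity means precisely that the Kobayashi--Eisenman pseudo-volume form is non-degenerate, $\Psi_X\ge c\,\omega_X^n$ on relatively compact sets; restricting $f$ to generic $n$-dimensional slices $S$ of $Y\setminus H$ and using the volume-decreasing property $(f|_S)^*\Psi_X\le\Psi_S$ bounds $\int_S(f|_S)^*\omega_X^n$ by $C\int_S\Psi_S$. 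Here the non-degeneracy of $f$ is what makes this genuinely constrain $f$ (a degenerate map factors through a possibly non-hyperbolic subvariety, where no such control survives and extension can indeed fail). The slice measures $\int_S\Psi_S$ are finite near $H$: locally $Y\setminus H\cong\Delta^*\times\Delta^{m-1}$, and by the monotonicity $\Psi_{Y\setminus H}|_U\le\Psi_U$ together with the classical finiteness of the Poincaré area of the punctured disc near its puncture. Integrating the resulting slice bounds over the family of slices then controls the $k=n$ term.

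Granting control of every mixed term, Bishop's theorem extends $\overline{\Graph(f)}$ analytically over the smooth locus of $H$; across the singular locus $H_{\mathrm{sing}}$, which is of codimension $\ge 2$ in $Y$, one applies the extension theorem a second time (the ambient volume bound being inherited from the previous step), and the birationality of the first projection finishes the argument as explained above.

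The hard part is precisely hidden in the phrase ``granting control of every mixed term''. Measure hyperbolicity controls only the \emph{top} slice integral, namely the $k=n$ term; the intermediate terms $\int(f^*\omega_X)^k\wedge\omega_Y^{m-k}$ with $0<k<n$ are \emph{not} bounded by it, since pointwise a semi-positive form can have a large $k$-th elementary symmetric function of its eigenvalues while its $n$-fold product stays moderate. Bounding these would require upper control of $f^*$ of the intermediate $k$-dimensional Kobayashi--Eisenman measures by the corresponding intrinsic measures on $Y\setminus H$, which is available only under the stronger assumption that $X$ is $k$-analytically hyperbolic for $k<n$, that is, under genuine \emph{intermediate} hyperbolicity rather than mere measure hyperbolicity. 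This is, I believe, exactly the gap that keeps the conjecture open in the stated generality, and it is consistent with the paper proving its theorems under $(\dim(X)-1)$-analytic hyperbolicity (and, in the pseudo-setting, under an additional hypothesis on the degeneracy set) rather than under measure hyperbolicity alone.
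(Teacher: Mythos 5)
You were asked to prove Conjecture~\ref{conjecture: extension}, and the first thing to be clear about is that the paper does not prove this statement: it is stated and left as an open conjecture, so there is no proof of it in the paper to compare yours against. What the paper proves instead is a substitute result, Proposition~\ref{prop: Picard and extension}, which establishes exactly this extension property but under the hypothesis that $X$ is $\ell$-Picard hyperbolic --- a notion introduced in Section~\ref{section: Picard} precisely because $\dim(X)$-analytic hyperbolicity is not known to suffice --- and Theorem~\ref{thm: mainthm3} accordingly takes $\dim(X)$-Picard hyperbolicity as an extra explicit hypothesis rather than deriving the extension property from measure hyperbolicity.

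Your attempt is, to its credit, honest about its own status. The strategy you set up is the classical one (bound the volume of $\Graph(f)$ near $H\times X$ with respect to $\pi_Y^*\omega_Y+\pi_X^*\omega_X$, apply Bishop's theorem to extend the closure of the graph as an analytic set, then read off a meromorphic map from the birationality of the first projection), and your diagnosis of where it breaks is correct and correctly located: $\dim(X)$-analytic hyperbolicity gives, via the measure-decreasing property of the Kobayashi--Eisenman pseudo-volume form and its non-degeneracy on the compact manifold $X$, control only of the top mixed term $\int (f^*\omega_X)^n\wedge\omega_Y^{m-n}$, whereas Bishop's theorem needs all the terms $\int (f^*\omega_X)^k\wedge\omega_Y^{m-k}$ with $0<k<n$. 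Controlling those would require non-degeneracy of the $k$-dimensional intrinsic measures, i.e.\ $k$-analytic hyperbolicity for $k<n$, and this does not follow from $n$-analytic hyperbolicity: the paper's Proposition~\ref{prop:l_to_lplusone} propagates hyperbolicity \emph{upward} in $\ell$, never downward, and your pointwise remark (a semi-positive form can have a large $k$-th elementary symmetric function of eigenvalues while the full product stays small) is exactly why no pointwise inequality rescues the intermediate terms. So what you have written is not a proof and must not be presented as one; it is a correct identification of the obstruction, and it is consistent with the paper's own way around it, namely replacing the analytic hypothesis by intermediate Picard hyperbolicity (Lemma~\ref{lemma: Picard}, Proposition~\ref{Picard: ell to ell plus one}, Proposition~\ref{prop: Picard and extension}), with the relation between the two notions left as a conjecture at the end of the paper.
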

It is worth noting here that the usual notion of (pseudo-)hyperbolicity, defined for complex spaces, has its analogues in the algebraic and arithmetic worlds, and that such finiteness properties were investigated in these settings: see \cite{BKV} and \cite{JKa} for \emph{algebraic hyperbolicity}, and \cite{JAut, JXie} for \emph{pseudo algebraic hyperbolicity} and \emph{arithmetic hyperbolicity}.
\newline

In this paper, we investigate the finiteness properties for intermediate (analytic) hyperbolicities, and prove first the following theorem.
 \begin{theorem}
 \label{thm: mainthm1}
 Let $X$ be a compact complex manifold which is $(\dim(X)-1)$-analytically hyperbolic. Then $\Aut(X)$ is finite.
 \end{theorem}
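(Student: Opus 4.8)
The plan is to realize $\Aut(X)$ as a \emph{compact} and \emph{discrete} group, so that it is finite. Write $n=\dim(X)$, and recall that since $X$ is compact, $\Aut(X)$ is a complex Lie group (Bochner--Montgomery) whose Lie algebra is $H^0(X,T_X)$; thus it suffices to prove that (i) $\Aut(X)$ is compact, and (ii) $H^0(X,T_X)=0$, so that $\Aut(X)$ is discrete. Throughout I would use two facts. First, $(n-1)$-analytic hyperbolicity forces $n$-analytic (i.e.\ measure) hyperbolicity, so both the $(n-1)$-st and the $n$-th Kobayashi--Eisenman pseudometrics $E_{n-1}$ and $E_n$ are positive on nonzero decomposable vectors. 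Second, these pseudometrics are invariant under $\Aut(X)$: the distance-decreasing property applied to $g$ and to $g^{-1}$ gives $E_\bullet(dg\cdot\xi)=E_\bullet(\xi)$. Together with compactness of $X$ (and the relevant semicontinuity, which is where hyperbolicity is really used), this yields a two-sided comparison $E_{n-1}\asymp\|\cdot\|^2$ and $E_n\asymp\|\cdot\|^2$ with a fixed smooth Hermitian metric on the bundles of decomposable $(n-1)$- and $n$-vectors.

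For (i), I would translate invariance into uniform control of differentials. Fixing $x$ and writing $A=dg_x$ with singular values $\sigma_1\ge\cdots\ge\sigma_n>0$ (for the chosen smooth metric), the comparison and the invariance of $E_{n-1}$ give $\|\Lambda^{n-1}A\,\xi\|\asymp\|\xi\|$ \emph{uniformly in $g$ and $x$} on decomposable $(n-1)$-vectors; since the extreme singular values of $\Lambda^{n-1}A$ are attained on decomposables and equal $\sigma_1\cdots\sigma_{n-1}$ and $\sigma_2\cdots\sigma_n$, this reads $\sigma_1\cdots\sigma_{n-1}\le C$ and $\sigma_2\cdots\sigma_n\ge c$. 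The invariance of $E_n$ gives in addition $\sigma_1\cdots\sigma_n=|\det A|\asymp 1$. Dividing, I conclude $\sigma_n\ge c_0/C$ and $\sigma_1\le C_0/c$, i.e.\ \emph{all} singular values of every $dg_x$ lie in a fixed compact subinterval of $(0,\infty)$. Hence $\Aut(X)$ is a uniformly bi-Lipschitz, equicontinuous family, so it is relatively compact in $C^0(X,X)$ by Arzel\`a--Ascoli; a limit of such maps is holomorphic, and the uniform lower bound on singular values (which also controls the inverses) ensures the limit is again a biholomorphism. Thus $\Aut(X)$ is compact.

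For (ii), suppose $v\in H^0(X,T_X)$ is nonzero, and let $\varphi\colon\CC\times X\to X$ be the induced holomorphic $\CC$-action (the flow is complete since $X$ is compact). I would choose $p$ with $v(p)\ne 0$ and an $(n-2)$-dimensional holomorphic disk $\Sigma\ni p$ with $T_p\Sigma\cap\CC\,v(p)=0$, and set $F(t,\sigma)=\varphi_t(\sigma)$ on $\CC\times\Sigma$. Since $F(0,\sigma)=\sigma$ and $\partial_t F(0,p)=v(p)$, the differential $dF_{(0,p)}$ has rank $n-1$, so $\eta:=dF_{(0,p)}\big(\partial_t\wedge\Lambda^{n-2}T_p\Sigma\big)$ is a nonzero decomposable $(n-1)$-vector. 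On the source $\CC\times\Sigma\cong\CC\times\mathbb{D}^{n-2}$, however, precomposing with the dilation $(w_1,\dots,w_{n-1})\mapsto(Rw_1,w_2,\dots,w_{n-1})$ and letting $R\to\infty$ shows that $E_{n-1}$ vanishes on $\partial_t\wedge\Lambda^{n-2}T_p\Sigma$; by the distance-decreasing property $E_{n-1}(\eta)=0$, contradicting $(n-1)$-analytic hyperbolicity. Hence $H^0(X,T_X)=0$ and $\Aut(X)$ is discrete.

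Combining (i) and (ii), $\Aut(X)$ is compact and discrete, hence finite. I expect the genuine technical points to be, first, the precise semicontinuity and compactness input needed for the two-sided comparison of $E_{n-1}$ (the lower bound is exactly the content of hyperbolicity on the compact unit-sphere bundle of decomposable vectors), and second, the verification that $C^0$-limits of automorphisms remain automorphisms; the linear-algebra passage from $\Lambda^{n-1}$-control to control of $dg$ itself, via the determinant normalization coming from measure hyperbolicity, is the conceptual core of the compactness step.
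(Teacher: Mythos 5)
Your proposal is correct, and it follows the paper's overall strategy---$\Aut(X)$ is finite because it is discrete and compact---but both halves are executed by a genuinely different route, and in the compactness step your route is cleaner. For compactness, the paper argues by contradiction: assuming $\max_x\|(df_n)_x\|\to\infty$, it bounds $|\det(df_n)|$ above and below via $\dim(X)$-analytic hyperbolicity (Lemma \ref{lemma: basic lemma1}, applied to $f_n$ and $f_n^{-1}$), produces unit vectors $v_n$ with $A_nv_n\to 0$ via the polar-decomposition estimate of Lemma \ref{lemma: basic lemma2}, and then a cofactor expansion of a column-modified matrix forces some $(p-1)\times(p-1)$ minor to blow up, contradicting $(p-1)$-analytic hyperbolicity. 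You instead extract uniform two-sided bounds directly: since in dimension $n$ every $(n-1)$-vector is decomposable, invariance of $E_{n-1}$ and $E_n$ under $\Aut(X)$ together with the two-sided comparison against a smooth metric (lower bound = hyperbolicity plus compactness of the unit sphere bundle; upper bound = local boundedness of the Eisenman pseudometrics, valid on any compact manifold via charts) yield $\sigma_1\cdots\sigma_{n-1}\le C$, $\sigma_2\cdots\sigma_n\ge c$ and $|\det dg_x|\asymp 1$, whence \emph{all} singular values of \emph{all} differentials $dg_x$, $g\in\Aut(X)$, lie in a fixed compact subinterval of $(0,\infty)$. This is the same underlying mechanism---$(n-1)$- and $n$-hyperbolicity jointly control $dg$---but packaged as a direct uniform bi-Lipschitz estimate for the whole group rather than a contradiction along a sequence; it buys a cleaner Arzel\`a--Ascoli limit argument and dispenses with the paper's Lemma \ref{lemma: basic lemma2} and the cofactor bookkeeping. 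For discreteness, the paper (Lemma \ref{lem:discreteness_measure}, via Proposition \ref{prop:l_to_lplusone}) contradicts pseudo-$\dim(X)$-analytic hyperbolicity with a non-degenerate map $\CC\times\BB^{p-1}\to X$; you contradict $(n-1)$-analytic hyperbolicity directly, using the flow over an $(n-2)$-dimensional transverse slice and the dilation trick---equivalent, and marginally more self-contained. In a full write-up the two points to make explicit are exactly the ones you flag: the upper bound $E_\ell\le C\,\|\cdot\|$ on compact manifolds, and the fact that uniform convergence of holomorphic maps upgrades, by Cauchy estimates, to local uniform convergence of differentials, so a limit of automorphisms inherits the singular-value bounds and is itself an automorphism.
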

 Note that $X$ is not supposed projective, and that unfortunately we were not able to obtain the suspected optimal hypothesis, i.e. $\dim(X)$-analytic hyperbolicity.
 
Following Lang's terminology, each intermediate hyperbolicity has its \emph{pseudo}-version, where a closed degeneracy set is allowed for the Kobayashi-Eisenmann (pseudo-)metrics (see Section \ref{section: basics} for definitions). Depending on whether we want the degeneracy set to be analytically closed, or simply topologically closed, we will write \emph{pseudo$^{an}$} or \emph{pseudo$^{top}$}.
In view of Lang's conjecture that a projective manifold of general type should be pseudo$^{an}$-hyperbolic, it is expected that, in the realm of projective manifolds, the intermediate pseudo$^{an}$-hyperbolicities should be equivalent. For more details on Lang's conjectures, and their analogues in the algebraic and arithmetic settings, see e.g. \cite{JBook}.

We were then naturally lead to investigate the question  of finiteness of $\Aut(X)$ under the weaker assumptions of $\dim(X)$-analytic hyperbolicity and pseudo$^{top}$ / pseudo$^{an}$-$(\dim(X)-1)$-analytic hyperbolicity, and were able to prove the following
\begin{theorem}
\label{thm: mainthm2}
Let $X$ be \emph{projective} manifold, $\dim(X)$-analytically hyperbolic and $(\dim(X)-1)$-analytically hyperbolic modulo a \emph{topologically} closed subset $\Delta=\Delta_{X,\ell}$ of zero $(2\dim(X)-3)$ Haussdorf dimension. Then $\Aut(X)$ is finite.
\end{theorem}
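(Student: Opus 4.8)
The plan is to show that $\Aut(X)$ is both \emph{discrete} (as a complex Lie group) and \emph{finite}, following the two-step mechanism underlying Theorem \ref{thm: mainthm1} but now carefully handling the degeneracy locus. Write $n:=\dim(X)$ and $\Delta=\Delta_{X,n-1}$. Since $\Delta$ is the degeneracy set of the intrinsically defined Kobayashi--Eisenmann $(n-1)$-pseudometric, it is preserved by every automorphism, so $\Aut(X)$ acts on $X$ preserving both $\Delta$ and $X\setminus\Delta$. For the first step, suppose $\Aut^{0}(X)$ were positive-dimensional; then $X$ carries a nonzero holomorphic vector field $V$, which by compactness is complete and generates a one-parameter subgroup $\phi\colon\CC\to\Aut(X)$. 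Fix $x$ with $V(x)\neq 0$ and $x\notin\Delta$, which is possible because $\{V=0\}$ is a proper analytic subset while $\Delta$, having zero $(2n-3)$-Hausdorff measure, is in particular of measure zero. Choosing a local holomorphic $h\colon\mathbb{D}^{\,n-2}\to X$ with $h(0)=x$ whose differential spans an $(n-2)$-plane complementary to $V(x)$, the map $(u,s)\mapsto\phi_{Ru}(h(s))$ from $\mathbb{D}^{\,n-1}$ realizes the $(n-1)$-vector $R\cdot V(x)\wedge h_{*}(e_{1})\wedge\cdots\wedge h_{*}(e_{n-2})$; letting $R\to\infty$ shows this nonzero decomposable $(n-1)$-vector has vanishing length, contradicting pseudo-$(n-1)$-hyperbolicity at $x\notin\Delta$. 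Hence $\Aut^{0}(X)=\{e\}$ and $\Aut(X)$ is discrete for the compact-open topology.

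For the second step, assume for contradiction that $\Aut(X)$ is infinite and pick pairwise distinct $\phi_{k}\in\Aut(X)$. Over $X\setminus\Delta$ every $(n-1)$-vector is decomposable, so $E^{n-1}_{X}$ is there a genuine continuous positive Finsler metric on $\Lambda^{n-1}T_{X}$, preserved by each $\phi_{k}$. Through the cofactor identity $\Lambda^{n-1}(d\phi_{k})=\det(d\phi_{k})\cdot{}^{t}(d\phi_{k})^{-1}$, together with the two-sided control of $\det(d\phi_{k})$ coming from invariance of the positive $n$-volume $E^{n}_{X}$ (measure hyperbolicity), one bounds $d\phi_{k}$ above and below on compact subsets of $X\setminus\Delta$. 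By Arzel\`a--Ascoli and a diagonal argument over an exhaustion of $X\setminus\Delta$, a subsequence converges locally uniformly on $X\setminus\Delta$ to a holomorphic map $\phi_{\infty}\colon X\setminus\Delta\to X$.

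Finally, since $\Delta$ has zero $(2n-3)$-Hausdorff measure it is removable for bounded holomorphic functions, so (working in charts, the target being compact) $\phi_{\infty}$ extends to a holomorphic self-map $\phi_{\infty}\colon X\to X$, and a Montel argument shows $\phi_{k}\to\phi_{\infty}$ locally uniformly across $\Delta$ as well. Invariance of $E^{n}_{X}$ gives a pointwise lower bound $|\det d\phi_{\infty}(x)|>0$ for each $x\in X\setminus\Delta$, so the zero locus of the holomorphic function $\det d\phi_{\infty}$ is contained in $\Delta$; as this locus, if nonempty, would be a complex hypersurface of real dimension $2n-2$ while $\dim_{\mathcal H}\Delta\le 2n-3$, it must be empty. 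Thus $\phi_{\infty}$ is everywhere \'etale, and as a locally uniform limit of the degree-one maps $\phi_{k}$ it has topological degree one, hence is a biholomorphism, i.e. $\phi_{\infty}\in\Aut(X)$. Then the distinct $\phi_{k}$ converge in $\Aut(X)$, contradicting discreteness, so $\Aut(X)$ is finite.

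The delicate point is the behaviour near $\Delta$: the $(n-1)$-metric degenerates along $\Delta$, so the derivative bound of the second step is not uniform once the images $\phi_{k}(x)$ approach $\Delta$, and the limit is a priori defined only off $\Delta$. Overcoming this is exactly where the two extra hypotheses enter: measure hyperbolicity supplies the global two-sided Jacobian control needed to keep the limit non-degenerate, while the sharp $(2n-3)$-Hausdorff bound both makes $\Delta$ removable and forbids the ramification hypersurface of $\phi_{\infty}$ from hiding inside $\Delta$. Projectivity is used to guarantee that the measure-hyperbolic $n$-volume is comparable to a smooth volume form, so that these bounds are genuinely uniform; securing this comparison, and the attendant control of the automorphisms as their images crowd $\Delta$, is the main obstacle.
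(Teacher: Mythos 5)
Your discreteness step is fine (it is a variant of Lemma \ref{lem:discreteness_measure}), and your overall scheme --- discreteness plus sequential compactness of $\Aut(X)$, with the absolute $\dim(X)$-analytic hyperbolicity controlling Jacobians and the $(\dim(X)-1)$-analytic hyperbolicity modulo $\Delta$ controlling full derivatives --- is the paper's. But the two steps that carry all the difficulty both have genuine gaps. First, the Arzel\`a--Ascoli step: to bound $\|d\phi_k\|$ on a compact $K\subset X\setminus\Delta$ through invariance of $e_{n-1}$, you must bound the minorizing Finsler metric $\omega\le e_{n-1}$ from below against a smooth metric \emph{at the image points} $\phi_k(x)$, and this lower bound degenerates when those images approach $\Delta$ --- which nothing forbids. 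You flag this yourself as ``the main obstacle'' and leave it unresolved, so the limit $\phi_\infty$ is never actually constructed. The paper's argument (Lemma \ref{lemma: aut}, following the proof of Theorem \ref{thm: mainthm1}) never evaluates the degenerate metric at image points: the two-sided determinant bounds come from the absolute $\dim(X)$-analytic hyperbolicity, valid everywhere on $X$, and the decisive blow-up of an $(n-1)\times(n-1)$ minor is placed --- by passing to the inverse automorphisms and the adjugate identity --- at the source points $x_k\to x\notin\Delta$, which is exactly the situation in which Lemma \ref{lemma: basic lemma1} applies with exceptional set $\Delta$.

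Second, the extension across $\Delta$ is wrong as stated. Zero $(2n-3)$-Hausdorff measure makes $\Delta$ removable for bounded holomorphic \emph{functions}, but $\phi_\infty$ maps into a compact manifold: near a point of $\Delta$ its image need not lie in any single coordinate chart of the target (the cluster set can be positive-dimensional), so ``working in charts'' is unavailable, and in general such maps extend only \emph{meromorphically} --- this is Siu's theorem, and it is precisely why the Hausdorff-measure and K\"ahler/projectivity hypotheses appear in the statement. Everything after this point in your proof (Montel convergence across $\Delta$, nonvanishing of $\det d\phi_\infty$, the degree-one argument) rests on this unproved holomorphic extension. The corresponding step is the technical core of the paper: $f=\lim\phi_k$ and $g=\lim\phi_k^{-1}$ are extended meromorphically by Siu's theorem, one proves $I_g\subset f(I_f)$ and $I_f\subset g(I_g)$ for the indeterminacy loci, and one forces $I_f=I_g=\emptyset$ by mapping into the Stein complement of a suitable hypersurface in a projective neighborhood of $\Delta$ and invoking Andreotti's extension theorem across sets of $\codim\ge 2$; only then is the limit an element of $\Aut(X)$. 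Your proposal has no substitute for this argument. A smaller point: your final contradiction needs convergence in the topology in which $\Aut(X)$ is discrete (compact-open on all of $X$), while you only have convergence off $\Delta$; the paper sidesteps this by applying Bochner--Montgomery to the compact image of $\Aut(X)$ inside $\Aut(X\setminus\Delta)$ and using its countability.
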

Note that this time, the manifold is supposed to be projective, and that any Zariski closed proper subset $\Delta$ of codimension $\geq 2$ satisfies the assumptions of the theorem. The hypothesis on the degeneracy set may seem odd: this is a technical hypothesis necessary to apply a very general extension theorem due to Siu \cite{siu1975extension}. 

In the case of pseudo$^{an}$-$(\dim(X)-1)$-analytic hyperbolicity, i.e. when $\Delta$ is algebraic, the extension property needed in the course of the proof is in fact exactly the extension property (*) we saw above. Accordingly, and interestingly enough, the questions on finiteness are connected to the ones on extension. Whereas this seemed to us to be a much harder task to relate intermediate hyperbolicities to this extension property, in view of the recent papers \cite{JKuch} and \cite{deng2020big}, we realized that there is a natural notion of \emph{intermediate Picard hyperbolicity}  that precisely encapsulates these extension properties. In this framework, the scheme of proof of \ref{thm: mainthm2} allowed us to obtain the following theorem (see Section \ref{section: Picard} for the intermediate notions of $\ell$-Picard hyperbolicity)
\begin{theorem}
\label{thm: mainthm3}
Let $X$ be a projective manifold, $\dim(X)$-Picard hyperbolic, $\dim(X)$-analytically hyperbolic and pseudo$^{an}$-$(\dim(X)-1)$-analytically hyperbolic. Then $\Aut(X)$ is finite.
\end{theorem}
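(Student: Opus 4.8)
Write $n=\dim(X)$. Since $X$ is projective, $\Aut(X)$ is a group scheme locally of finite type over $\CC$, hence a complex Lie group whose identity component $\Aut^0(X)$ is a connected algebraic group and whose group of components is countable. The plan is to show that $\Aut(X)$ is simultaneously \emph{discrete} and \emph{compact} in the compact-open topology, which forces finiteness. I would derive discreteness (the triviality of $\Aut^0(X)$) from the $n$-analytic hyperbolicity, and compactness from the combination of pseudo$^{an}$-$(n-1)$-analytic hyperbolicity (providing normality away from $\Delta$) and $n$-Picard hyperbolicity (providing extension across $\Delta$), exactly mirroring the scheme of proof of Theorem \ref{thm: mainthm2}, with Picard hyperbolicity playing the role that Siu's extension theorem played there.

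First I would show $\Aut^0(X)=\{e\}$. Suppose not. A positive-dimensional connected algebraic group admits a non-trivial one-parameter subgroup $\Ga\to\Aut^0(X)$ or $\Gm\to\Aut^0(X)$ (from the affine part in Chevalley's decomposition, or an analytic one-parameter subgroup $\CC\to\Aut^0(X)$ coming from the abelian part), and since $\Aut^0(X)$ acts non-trivially, some such subgroup moves a general point $x$. This yields a non-constant holomorphic map from $\CC$ or $\CC^*$ onto the orbit curve $C_x$; as $\CC$ and $\CC^*$ carry identically-zero Kobayashi pseudometric, the Kobayashi--Eisenmann $1$-metric vanishes in the direction tangent to $C_x$ at $x$, and wedging that direction into a decomposable $n$-vector forces the top form $E_n$ to vanish at $x$. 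Such orbits sweep out a dense open subset of $X$, contradicting $n$-analytic hyperbolicity (strong measure hyperbolicity). Hence $\Aut^0(X)=\{e\}$ and $\Aut(X)$ is discrete.

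For compactness, let $(g_m)$ be a sequence in $\Aut(X)$. Automorphisms are isometries of the intrinsic $(n-1)$-Kobayashi--Eisenmann metric, which is non-degenerate off the \emph{algebraic} locus $\Delta$ by pseudo$^{an}$-$(n-1)$-analytic hyperbolicity; this makes $\Aut(X)$ equicontinuous on $X\setminus\Delta$, so after passing to a subsequence $g_m\to g$ locally uniformly on $X\setminus\Delta$ with $g\colon X\setminus\Delta\to X$ holomorphic, and likewise $g_m^{-1}\to h$. Granting that the limit $g$ is non-degenerate, the hypothesis that $\Delta$ is algebraic lets $n$-Picard hyperbolicity (equivalently the extension property $(*)$) extend $g$ to a meromorphic self-map $g\colon X\dashrightarrow X$, and similarly $h$; the relations $g\circ h=h\circ g=\Id$ (valid off the relevant analytic sets) exhibit $g\in\Bir(X)$, while its isometry property for $E_{n-1}$ together with non-degeneracy forbids any divisorial contraction, so $g\in\Aut(X)$ and $g_m\to g$ in $\Aut(X)$. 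Thus $\Aut(X)$ is compact, and being also discrete it is finite.

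The hard part will be the non-degeneracy of the limit $g$ inside the compactness argument: a priori a locally uniform limit of automorphisms could drop rank everywhere on $X\setminus\Delta$, which would both obstruct the application of the extension property $(*)$ and prevent identifying the limit with an automorphism. Controlling this is precisely where all three hypotheses must be played against one another — the intermediate hyperbolicity furnishing the normality, the $n$-analytic hyperbolicity keeping the $\Aut$-invariant Kobayashi--Eisenmann volume bounded away from zero along the sequence (so the limit cannot collapse), and the $n$-Picard hyperbolicity supplying the extension — and it is exactly the technical core already confronted in the proof of Theorem \ref{thm: mainthm2}.
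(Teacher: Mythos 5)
Your proposal follows essentially the same route as the paper: the paper proves Theorem \ref{thm: mainthm3} by rerunning the scheme of proof of Theorem \ref{thm: mainthm2} (discreteness via Lemma \ref{lem:discreteness_measure}, compactness via Lemma \ref{lemma: aut}), with the extension property $(\mathcal{P}_{\Delta})$ --- supplied there by Siu's theorem under the Hausdorff-measure hypothesis --- now supplied by $\dim(X)$-Picard hyperbolicity through Proposition \ref{prop: Picard and extension}, which applies precisely because $\Delta$ is algebraic and the limit maps are non-degenerate (the two points you correctly single out). One caveat: your standalone glosses of the borrowed steps are looser than the actual arguments you are invoking --- vanishing of $e_{1}$ in an orbit direction does not by itself force $e_{n}$ to vanish (one uses the flow map $\CC \times \BB^{n-1} \rightarrow X$, as in Lemma \ref{lem:discreteness_measure}), equicontinuity comes from the determinant/minor interplay of Lemmas \ref{lemma: basic lemma1} and \ref{lemma: basic lemma2} rather than from isometry of $e_{n-1}$ alone, and the limit is promoted to an automorphism by killing the indeterminacy loci via the Stein-neighborhood argument of Lemma \ref{lemma: aut}, not by excluding divisorial contractions --- but since all of this is already established in the proof of Theorem \ref{thm: mainthm2}, which your reduction explicitly mirrors, these imprecisions do not affect the validity of the proposal.
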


The paper is organized as follows. In the first part, we recall the notions of pseudo$^{top/an}$-intermediate hyperbolicities, and prove some basic results regarding them. In the second part, we start by proving the finiteness result of $\Aut(X)$ in the absolute case (Theorem \ref{thm: mainthm1}), and then move onto the proof in the pseudo case (Theorem \ref{thm: mainthm2}). In the third and last part, we introduce the notions of \emph{intermediate Picard hyperbolicities}, give some of their basic properties, and prove Theorem \ref{thm: mainthm3}.

\section{Automorphisms of $\ell$-analytically hyperbolic manifolds}
 \label{analytic hyperbolicity}

This section, motivated by Conjecture \ref{conjecture: finiteness}, is devoted to the proof of Theorem \ref{thm: mainthm1} and its pseudo-version Theorem \ref{thm: mainthm2}. We start by proving some basic properties on intermediate hyperbolicity, in particular that $\ell$-analytic hyperbolicity modulo $\Delta$ implies $(\ell+1)$-analytic hyperbolicity modulo $\Delta$ (See also \cite{graham1985}, where it is proved in the absolute case), and then move to the two proofs in question. We end this section by some conditions ensuring intermediate hyperbolicity, and hence the finiteness of $\Aut(X)$ in suitable cases.

\subsection{Basics on $\ell$-analytic hyperbolicity}
 \label{section: basics}
Throughout this section, we let $X$ be a  {compact} complex manifold of dimension $p$ and $1 \leq \ell \leq p$ be an integer. Following \cite{Demailly} and \cite{graham1985}, define the  \emph{Kobayashi-Eisenmann pseudo-metric $e_{\ell}$} on \emph{pure} multi-vectors of $\bigwedge^{\ell}TX$ by setting for $\xi=v_{1}\wedge \dotsc \wedge v_{\ell} \in \bigwedge^{\ell}T_{x}X, x \in X$:
\[
e_{\ell}(x, \xi)
=
\{
\inf \frac{1}{R} \ | \ \exists \ f: \mathbb{B}^{\ell} \rightarrow X \ \text{holomorphic}, \  f(0)=x, \ f_{*}\big(\frac{\partial}{\partial z_{1}} \wedge \dotsc \wedge \frac{\partial}{\partial z_{\ell}}\big)=R \xi \}.
\]
For $\Delta \subsetneq X$ a closed subset of $X$ (of empty interior), the compact complex manifold $X$ is  said  $\ell$-analytically hyperbolic modulo $\Delta$ if and only if there exists a continuous Finsler pseudo-metric $\omega$ defined on pure multi-vectors of $\bigwedge^{\ell}TX$, which is a metric on $X \setminus \Delta$ and such that
$e_{\ell} \geq \omega$
on $X$. We will denote $\Delta_{X, \ell}$ the smallest closed subset satisfying such a property, commonly referred to as the \emph{exceptional locus} (with respect to the pseudo-metric $e_{\ell}$).
\begin{remark}
\label{remark: smallest}
 Such a set is well defined since if $X$ is $\ell$-hyperbolic modulo $\Delta_{1}$ and $\ell$-hyperbolic modulo $\Delta_{2}$, then $X$ is $\ell$-hyperbolic modulo $\Delta_{1} \cap \Delta_{2}$: for $i=1,2$, take $\omega_{i}$ a continuous Finsler pseudo-metric on (pure multi-vectors of) $\bigwedge^{\ell}TX$, which is a metric on  $X\setminus \Delta_{i}$, and such that $e_{\ell} \geq \omega_{i}$ on $X$, and consider
\[
\omega= \frac{\omega_{1} + \omega_{2}}{2},
\]
which is a continuous Finsler pseudo-metric on (pure multi-vectors of) $\bigwedge^{\ell}TX$, a metric on $X \setminus (\Delta_{1} \cap \Delta_{2})$, and satisfies $e_{\ell} \geq \omega$ on $X$.
\end{remark}
We will say that $X$ \emph{pseudo$^{top}$} (resp. \emph{pseudo$^{an}$}) $\ell$-analytically hyperbolic if $\Delta_{X,\ell}$ is of empty interior (resp. if $\Delta_{X, \ell}$ is a proper analytically closed subset). Note that if $\ell=\dim X$ and $\Delta:=\emptyset$, such a manifold is called \emph{strongly measure hyperbolic} in \cite{Kobayashi} and more generally \emph{strongly $\ell$-measure hyperbolic} in \cite{graham1985} for $1\leq \ell\leq\dim X$ and $\Delta:=\emptyset$.

A first basic observation is that, for $f: \CC \times \mathbb{B}^{\ell-1} \rightarrow X$ a holomorphic map, the pseudo-metric $e_{\ell}$ degenerates along the image of every non-degenerate point of $f$. Note that we say that $f$ is   \emph{non-degenerate} if there exists at least one point (and thus a non-empty Zariski open subset as being degenerate for an holomorphic map is an analytically closed condition) at which $f$ is non-degenerate.
If $\ell=1$, by Brody's reparametrization lemma (see e.g. \cite{Kobayashi}[Lemma 3.6.2]), the existence of a degeneracy locus for $e_{1}$ implies the existence of a non-constant holomorphic map $f: \CC \rightarrow X$, commonly referred to as an \emph{entire curve} in $X$. Let us make two comments on that. First, the degeneracy locus of $e_{1}$ need not be equal to the union of the closure of the image of all entire curves. The reason for this lies in the way one constructs an entire curve via Brody's lemma: we have no control on the image of the entire curve, and in particular, we can not ensure that there exists an entire curve passing arbitrarly close to a point where $e_{1}$ degenerates. Second, we see that it implies that a compact complex manifold $X$ is $1$-analytically hyperbolic if and only if $X$ contains no entire curves ($X$ is commonly called \emph{Brody-hyperbolic}). However, in the case where we allow a degeneracy set for $e_{1}$,  there is to our knowledge no clean relationship between the geometry of entire curve and $1$-analytic hyperbolicity modulo $\Delta$. Even worse, in the case $p>1$, the non-existence of non-degenerate holomorphic maps $f: \CC\times \mathbb{B}^{\ell-1} \rightarrow X$ does not imply, a priori, that $X$ is $\ell$-analytically hyperbolic.
Also, note that, in the case where $p=1$, the notion of $1$-analytic hyperbolicity modulo $\Delta$ is the same as the notion of hyperbolicity modulo $\Delta$ (in the sense of Kobayashi), where we recall that $X$ is hyperbolic modulo $\Delta$ if and only if the Kobayashi metric $d_{X}$ satisfies 
\[
d_{X}(p,q)=0 \Longleftrightarrow p=q, \ \text{or} \ p, q \in \Delta.
\]
Indeed, one implication follows from a result of Royden (\cite{royden1971remarks}), stating the Kobayashi pseudo-distance $d_{X}$ is obtained as the integration of the infinitesimal pseudo-metric $e_{1}$, whereas the reverse implication follows for instance from \cite{Kobayashi}[Cor. (3.5.41)].

\subsection{From $\ell$ to $\ell+1$}
Equip $X$ with any smooth hermitian metric with its induced Riemaniann metric on $X$. Moreover, for $1\leq \ell \leq p$,   equip the vector bundles $\bigwedge^{\ell}TX$ with smooth (Finsler) metrics defined on pure multi-vectors that we denote $F_{\ell}$. Here,  $F_{1}$ is the hermitian metric.

Let $U \overset{\varphi}{\simeq} \mathbb{B}^{p}$ be any trivializing open set, and denote $(\frac{\partial}{\partial z_{i}})_{1 \leq i \leq p}$ the standard vector fields on $\mathbb{B}^{p}$. Let $1\leq \ell \leq p$ be an integer.
For $v_{1} \wedge \dotsc \wedge v_{\ell} \in \bigwedge^{\ell}T_{x}X$ and $x \in U$, denote 
\[
A(x; v_{1}, \dotsc, v_{\ell})
\]
the $(\ell\times p)$-matrix which represents $\big((\varphi)_{*}(v_{1}), \dotsc, (\varphi)_{*}(v_{\ell})\big)$ in the basis $(\frac{\partial}{\partial z_{1}}, \dotsc, \frac{\partial}{\partial z_{p}})$. We   define a smooth Finsler metric on pure multi-vectors of $\bigwedge^{\ell}TX_{| U}$ by setting
\[
h_{\ell,U}(x; v_{1} \wedge \dotsc \wedge v_{\ell})
\bydef
\max \{ |\det(M)| \ | \ \text{$M$ is a $\ell\times \ell$ extracted matrix of $A(x; v_{1}, \dotsc, v_{\ell})$} \}.
\]
Note that this metric is well-defined. Indeed, if $v_{1}'\wedge \dotsc \wedge v_{\ell}'=v_{1}\wedge \dotsc \wedge v_{\ell}$, then there exists a matrix $Q$ in $\mathrm{GL}_{\ell}(\CC)$ with $\det(Q)=1$  such that \[QA(x; v'_{1}, \dotsc, v'_{\ell})=A(x; v_{1}, \dotsc, v_{\ell}).\]
 Since both $F_{\ell}$ and $h_{\ell,U}$ are continuous on $U$, for any compact subset $K \subset U$,  there exist  real numbers $0<c\leq C $ such that, for all $x \in K$ and all $\xi \in \bigwedge^{\ell}T_{x}X$ pure multi-vector, the inequalities
\[
c\cdot \big|\xi\big|_{h_{\ell,U}(x)} 
\leq
\big|\xi\big|_{F_{\ell}(x)} 
\leq 
C \cdot \big|\xi\big|_{h_{\ell,U}(x)}
\] hold.
 Accordingly, when dealing with questions of convergence towards $0$ or divergence to $\infty$ with respect to the metric $F_{\ell}$ of sequences of pure multi-vectors, we can always use the local metric $h_{\ell,U}$.

To prove Proposition \ref{prop:l_to_lplusone} below, we will use the following  basic lemma, which is merely a reformulation of what $\ell$-analytic hyperbolicity modulo $\Delta$  means. 
 
\begin{lemma}
\label{lemma: basic lemma1}
Let $\Delta \subsetneq X$ be a closed set and let $1 \leq \ell \leq p$. If $X$ is not $\ell$-analytically hyperbolic modulo $\Delta$, then there exists $h_{n}: \mathbb{B}^{\ell} \rightarrow X$ holomorphic maps such that
\[
\big|(h_{n})_{*}(\frac{\partial}{\partial z_{1}} \wedge \dotsc \wedge \frac{\partial}{\partial z_{\ell}})\big|_{F_{\ell}(x_{n})} \rightarrow \infty,
\]
with $x_{n}=h_{n}(0) \rightarrow x \notin \Delta$.

Reciprocally, if there exists $h_{n}: \mathbb{B}^{\ell} \rightarrow X$ holomorphic maps, and $z_{n} \rightarrow 0$ such that
\[
\big|(h_{n})_{*}(\frac{\partial}{\partial z_{1}} \wedge \dotsc \wedge \frac{\partial}{\partial z_{\ell}})\big|_{F_{\ell}(x_{n})} \rightarrow \infty,
\]
with $x_{n}=h_{n}(z_{n}) \rightarrow x \notin \Delta$, then $X$ is not $\ell$-analytically hyperbolic modulo $\Delta$.
\end{lemma}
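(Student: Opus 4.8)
The plan is to translate both implications into statements about a single scalar attached to $e_\ell$. Write $e \bydef \partial/\partial z_1\wedge\dots\wedge\partial/\partial z_\ell$ for the standard decomposable $\ell$-vector on $\mathbb{B}^\ell$, and for $x\in X$ set
\[
\mu(x)\bydef\inf\{\, e_\ell(x,\xi)\ |\ \xi\in\textstyle\bigwedge^\ell T_xX\text{ pure},\ |\xi|_{F_\ell(x)}=1\,\}.
\]
Since $f_*(e)$ is automatically decomposable and $e_\ell$ is $1$-homogeneous in $\xi$, unwinding the definition of $e_\ell$ yields the dictionary
\[
\mu(x)=\Big(\sup\big\{\,|f_*(e)|_{F_\ell(x)}\ |\ f\colon\mathbb{B}^\ell\to X\text{ holomorphic},\ f(0)=x\,\big\}\Big)^{-1},
\]
so that having maps whose derivative-multivector blows up at base points $x_n$ is the same datum as $\mu(x_n)\to 0$. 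I would first record this identity (using, as permitted, the local comparison between $F_\ell$ and $h_{\ell,U}$ to carry out all norm estimates in a chart). The lemma then reduces to the assertion that $X$ fails to be $\ell$-analytically hyperbolic modulo $\Delta$ \emph{precisely} when $\mu$ is not bounded away from $0$ near some point of $X\setminus\Delta$.

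For the \emph{reciprocal} statement, suppose we are given $h_n$ and $z_n\to 0$ with the stated blow-up at $x_n=h_n(z_n)\to x\notin\Delta$. I would first reduce to base point $0$ by reparametrizing: choose automorphisms $\phi_n$ of $\mathbb{B}^\ell$ with $\phi_n(0)=z_n$; since $z_n\to 0$ one may take $\phi_n\to\mathrm{id}$, so $\det D\phi_n(0)\to 1$. Then $\tilde h_n\bydef h_n\circ\phi_n$ satisfies $\tilde h_n(0)=x_n$ and $(\tilde h_n)_*(e)|_0=\det(D\phi_n(0))\cdot(h_n)_*(e)|_{z_n}$, whose $F_\ell$-norm still tends to $\infty$; hence $\mu(x_n)\to 0$. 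If $X$ were $\ell$-analytically hyperbolic modulo $\Delta$ with witnessing metric $\omega$, then $\nu(y)\bydef\inf_{|\xi|_{F_\ell}=1}\omega(y,\xi)$ is continuous and, because $\omega$ is a genuine metric at $x\notin\Delta$, satisfies $\nu(x)>0$; from $e_\ell\geq\omega$ we get $\mu(x_n)\geq\nu(x_n)\to\nu(x)>0$, contradicting $\mu(x_n)\to 0$. This establishes the second implication.

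For the \emph{first} statement I would argue by contraposition: assume no sequence as in the conclusion exists. Unpacking the quantifiers, this says exactly that $\mu$ is locally bounded below by a positive constant at every point of the open set $\Omega\bydef X\setminus\Delta$ (were it to fail at some $x_0\in\Omega$, picking $y_k\to x_0$ with $\mu(y_k)\to 0$ and, via the dictionary, maps $h_k$ based at $y_k$ with derivative-norm $\to\infty$ would reproduce the forbidden sequence). It then remains to manufacture, out of this purely local lower bound, a \emph{global continuous} Finsler pseudo-metric. I would look for $\omega$ of the simple shape $\omega(x,\xi)=g(x)\,|\xi|_{F_\ell(x)}$, which is automatically a continuous Finsler pseudo-metric, a metric exactly where $g>0$, and which satisfies $e_\ell\geq\omega$ if and only if $g\leq\mu$ pointwise. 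So the task becomes: build a continuous $g\colon X\to[0,\infty)$ with $0\leq g\leq\mu$, $g>0$ on $\Omega$, and $g=0$ on $\Delta$. Using compactness of $X$, the exhaustion $\{\mathrm{dist}(\cdot,\Delta)\geq t\}$ of $\Omega$ by compacta, the resulting lower bounds $m(t)=\inf_{\{\mathrm{dist}\geq t\}}\mu>0$, and a continuous nondecreasing minorant $\tilde m$ of $m$, one sets e.g. $g(x)=\min(\mathrm{dist}(x,\Delta),\,\tilde m(\mathrm{dist}(x,\Delta)/2))$ and checks $g\leq\mu$ and continuity.

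I expect this last construction — producing a \emph{continuous} global minorant metric from the merely local lower bounds on $\mu$, while simultaneously forcing it to vanish on $\Delta$ and stay positive off $\Delta$ — to be the only genuinely delicate point; the dictionary and the reparametrization are formal. A secondary point to treat with care is that the infima defining $\mu$ and $\nu$ run over the \emph{compact} set of unit decomposable $\ell$-vectors, which is what makes $\nu$ continuous and the supremum defining $1/\mu$ behave well in the limit.
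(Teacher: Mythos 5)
Your proof is correct and follows essentially the same route as the paper: your reciprocal direction is the paper's argument almost verbatim (reparametrize so the base point is $0$, then contradict the witnessing metric $\omega$ on a relatively compact neighborhood of $x$ via compactness of the unit decomposable sphere bundle), and your contrapositive of the first direction—manufacturing a continuous pseudo-metric $g\,F_\ell$ vanishing exactly on $\Delta$ and minorizing $e_\ell$—is the same construction the paper carries out with an exhaustion of $X\setminus\Delta$ by relatively compact opens in place of the distance function. Your monotone-minorant implementation of that step is, if anything, slightly more careful than the paper's brief description of the function $\varphi$.
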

\begin{proof}
Suppose first that $X$ is not $\ell$-analytically hyperbolic modulo $\Delta$. Consider $(U_{n})_{n \in \NN}$ an exhaustion of relatively compact domains of $X \setminus \Delta$, i.e. $\overline{U_{n}} \subset U_{n+1} \subset X \setminus \Delta$ for all $n \in \NN$, and $\bigcup_{n \in \NN} U_{n}= X \setminus \Delta$. Observe then that the following property is not satisfied
\begin{equation}
\label{eq: l => l+1}
\begin{aligned}
\forall n \in \NN, \ \exists\delta_{n} > 0 \ \text{such that} \  e_{\ell} \geq \delta_{n} F_{\ell}\  \text{on} \  U_{n}.
\end{aligned}
\end{equation}
Indeed, otherwise by constructing a nonnegative continuous function $\varphi$ on $X$, which is equal to $0$ on $\Delta$, and satisfies $0 < \varphi \leq \delta_{n}$ on $U_{n}$, we deduce that
\[
\varphi F_{\ell} \leq e_{l}
\]
on $X$. As $\varphi F_{\ell}$ is a continuous Finsler pseudo-metric on (pure multi-vectors of) $\bigwedge^{\ell}TX$, which is a metric on $X\setminus \Delta$, this contradicts the fact is not $\ell$-analytically hyperbolic modulo $\Delta$.
Now, as \eqref{eq: l => l+1} is not satisfied, we deduce in particular the existence of a sequence of points $x_{n} \rightarrow x \in X \setminus \Delta$, aswell as a sequence of pure multi-vectors $\xi_{n} \in \bigwedge^{\ell} T_{x_{n}}X$, $|\xi_{n}|_{F_{\ell}(x_{n})}=1$, such that $e_{\ell}(x_{n}; \xi_{n}) \rightarrow 0$. By definition of $e_{\ell}$, this in turn gives a sequence of holomorphic maps $h_{n}: \mathbb{B}^{\ell} \rightarrow X$, $h_{n}(0)=x_{n}$, satisfying
\[
(h_{n})_{*}(\frac{\partial}{\partial z_{1}}\wedge \dotsc \wedge \frac{\partial}{\partial z_{\ell}})
=
R_{n} \xi_{n},
\]
with $R_{n} \rightarrow \infty$, so that the first implication is proved.

In the other direction, observe first that we can always suppose that $z_{n}=0$ for all $n \in \NN$: indeed, for all $n$, pick $\tau_{n} \in \Aut(\mathbb{B}^{\ell})$ such that $\tau_{n}(0)=z_{n}$, and consider instead
$\tilde{h_{n}}=h_{n}\circ \tau_{n}$; the announced fact then follows from the equality
\[
\big|(\tilde{h_{n}})_{*}(\frac{\partial}{\partial z_{1}} \wedge \dotsc \wedge \frac{\partial}{\partial z_{\ell}})\big|_{F_{\ell}(x_{n})}
=
\big|\det(d\tau_{n})_{0}\big| \cdot \big|({h_{n}})_{*}(\frac{\partial}{\partial z_{1}} \wedge \dotsc \wedge \frac{\partial}{\partial z_{\ell}})\big|_{F_{\ell}(x_{n})},
\]
and the fact that $\big|\det(d\tau_{n})_{0}\big|$ remains bounded (as $z_{n}$ stays away of $\partial \mathbb{B}^{\ell}$).

If $X$ were $\ell$-analytically hyperbolic modulo $\Delta$, then by definition there would exist a continuous Finsler metric $\omega$ on (pure multi-vectors of) $\bigwedge^{l} TX$, which is a metric on $X \setminus \Delta$, such that
\[
e_{\ell} \geq \omega
\]
on $X$. In particular, letting $U$ be a relatively compact neighborhood of $x$ in $X \setminus \Delta$, there exists $\delta>0$ such that for any $x \in U$ and $\xi=v_{1} \wedge \dotsc \wedge v_{\ell} \in \bigwedge^{\ell} T_{x}X$, we have the inequality
 \begin{equation}\begin{aligned}
\label{inequality1}
|\xi|_{e_{\ell}(x)} 
\geq 
|\xi|_{\omega(x)} 
\geq
\delta |\xi|_{F_{\ell}(x)}.
\end{aligned}\end{equation}
Now, the sequence
\[
\xi_{n}
=
\frac{(h_{n})_{*}(\frac{\partial}{\partial z_{1}} \wedge \dotsc \wedge \frac{\partial}{\partial z_{\ell}})}
{\big|(h_{n})_{*}(\frac{\partial}{\partial z_{1}} \wedge \dotsc \wedge \frac{\partial}{\partial z_{\ell}})\big|_{F_{\ell}(x_{n})}}
\in \bigwedge^{\ell}T_{x_{n}}X
\]
 satisfies $|\xi_{n}|_{F_{\ell}(z_{n})}=1$  by construction,
and by the very definition of the metric $e_{\ell}$, we have:
\[
\big|\xi_{n}\big|_{e_{\ell}(x_{n})} 
\leq 
\frac{1}{\big|(h_{n})_{*}(\frac{\partial}{\partial z_{1}} \wedge \dotsc \wedge \frac{\partial}{\partial z_{\ell}})\big|_{F_{\ell}(x_{n})}}
\longrightarrow 0,
\]
which contradicts the inequality \eqref{inequality1}, as for all $n$ large enough, $x_{n}$ lies in $U$. Therefore $X$ is not $\ell$-analytically hyperbolic modulo $\Delta$.
\end{proof}

 We will also use the following   elementary lemma in linear algebra.
\begin{lemma}
\label{lemma: basic linear algebra}
Let $A_1, A_2, \ldots $ be a sequence of matrices in $  \mathrm{GL}_{\ell}(\CC)$ with $|\det(A_{n})| \underset{n \rightarrow \infty}{\longrightarrow} \infty$. Then, the sequence $(A_n)$ has a subsequence $(A_{\varphi(n)})$ such that the norm of some fixed minor of $(A_{\varphi(n)})$ tends to infinity.  
\end{lemma}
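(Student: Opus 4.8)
The plan is to reduce the statement to its extreme case, that of a single entry (a $1\times 1$ minor), and then to extract the subsequence by a pigeonhole argument over the finite set of possible positions. Concretely, I would first bound $|\det(A_n)|$ from above by a power of the largest modulus of an entry, deduce from the hypothesis that this largest modulus must diverge, and finally fix the position realizing the maximum along a suitable subsequence. The only point that genuinely requires care is the a priori dependence on $n$ of the position realizing the maximum, which is exactly what the pigeonhole step removes; there is no analytic obstacle here, the lemma being purely combinatorial.

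For the upper bound I would use the Leibniz formula: for every $n$,
\[
|\det(A_n)| = \Big| \sum_{\sigma \in S_{\ell}} \operatorname{sgn}(\sigma) \prod_{i=1}^{\ell} (A_n)_{i,\sigma(i)} \Big| \leq \ell! \cdot \Big( \max_{1\leq i,j \leq \ell} |(A_n)_{ij}| \Big)^{\ell}.
\]
Since $|\det(A_n)| \to \infty$ by hypothesis, this forces $\max_{i,j} |(A_n)_{ij}| \to \infty$. For each $n$ I would then select a pair $(i(n), j(n))$ realizing this maximum; as there are only $\ell^2$ admissible pairs, some fixed pair $(i_0,j_0)$ occurs for infinitely many $n$, and this choice defines the subsequence $\varphi$. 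Along $(A_{\varphi(n)})$ the modulus of the entry $(A_{\varphi(n)})_{i_0 j_0}$ equals the maximal modulus of an entry, hence tends to $\infty$; since an entry is precisely the $1\times 1$ minor sitting at the fixed position $(i_0,j_0)$, this already proves the statement.

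If one prefers (or the application requires) a fixed minor of a prescribed size $k$ strictly smaller than the size of the matrices — for instance to pass from the $(\ell+1)$-metric to the $\ell$-metric, where one must extract a fixed $\ell\times\ell$ minor from an invertible $(\ell+1)\times(\ell+1)$ block — I would run the same scheme through the $k$-th compound matrix $\bigwedge^{k} A_n$, whose entries are exactly the $k\times k$ minors of $A_n$ and which obeys the determinantal identity $\det(\bigwedge^{k} A_n) = \det(A_n)^{\binom{\ell-1}{k-1}}$. Were all $k\times k$ minors of $A_n$ bounded by some constant $B$, the Leibniz estimate applied to $\bigwedge^{k} A_n$ would bound $|\det(A_n)|^{\binom{\ell-1}{k-1}}$, contradicting $|\det(A_n)| \to \infty$; hence some $k\times k$ minor diverges, and the pigeonhole argument over the finite set of $k$-subsets of rows and columns again isolates a fixed one along a subsequence.

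In summary, the argument is elementary and the main (and only) subtlety is the passage to a subsequence along which a \emph{fixed} minor — rather than an $n$-dependent one — blows up; the two ingredients doing the work are the Leibniz bound (respectively the compound-matrix determinant identity), which guarantees that boundedness of all minors of the relevant size would propagate to boundedness of the determinant, and the finiteness of the index set, which makes the pigeonhole step valid.
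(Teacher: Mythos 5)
Your proof is correct, and its decisive ingredient coincides with the paper's: the paper's one-line proof rests on the identity $|\det(\Com(A_n))|=|\det(A_n)|^{\ell-1}$, which is exactly the case $k=\ell-1$ of your compound-matrix identity $\det\bigl(\bigwedge^{k}A_n\bigr)=\det(A_n)^{\binom{\ell-1}{k-1}}$; you simply make explicit the Leibniz bound and the pigeonhole extraction that the paper compresses into ``this follows immediately''. Where you add something genuinely useful is in diagnosing the ambiguity of the word \emph{minor}: your first argument (Leibniz bound on the entries) proves the statement as literally worded, producing a divergent fixed $1\times 1$ minor, but that reading would be useless where the lemma is actually invoked, namely in Proposition \ref{prop:l_to_lplusone}, where one applies it to an $(\ell+1)\times(\ell+1)$ block and needs a divergent fixed minor of size $\ell$, i.e.\ of codimension one --- precisely the comatrix case. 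Your second argument handles exactly this (indeed any prescribed size $k$), so the proposal covers both the letter of the lemma and its actual use, and is if anything slightly more general than the paper's proof.
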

\begin{proof}
This follows immediately from the equality 
\[
|\det(A_{n})|^{\ell-1}=|\det(\Com(A_{n}))| \rightarrow \infty,
\]
where $\Com(A_{n})$ is the co-matrix of $A_{n}$, i.e., the matrix whose entries are  the minors of $A_{n}$.
\end{proof}
We now come to the following proposition (see also \cite{graham1985}, Proposition 2.17)
\begin{proposition}
\label{prop:l_to_lplusone}
If $X$ is $\ell$-analytically hyperbolic modulo $\Delta$, then $X$ is $(\ell+1)$-analytically hyperbolic modulo at least $\Delta$.
\end{proposition}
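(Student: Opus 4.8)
The plan is to argue by contraposition: I will show that if $X$ fails to be $(\ell+1)$-analytically hyperbolic modulo $\Delta$, then it already fails to be $\ell$-analytically hyperbolic modulo $\Delta$. Assuming the former, Lemma \ref{lemma: basic lemma1} (applied with $\ell+1$ in place of $\ell$) produces holomorphic maps $g_n : \mathbb{B}^{\ell+1} \to X$ with $x_n := g_n(0) \to x \notin \Delta$ and
\[
\big|(g_n)_*(\tfrac{\partial}{\partial z_1} \wedge \dotsc \wedge \tfrac{\partial}{\partial z_{\ell+1}})\big|_{F_{\ell+1}(x_n)} \longrightarrow \infty.
\]
The goal is to extract from each $g_n$ a holomorphic map $h_n : \mathbb{B}^{\ell} \to X$, defined on a coordinate slice of $\mathbb{B}^{\ell+1}$, whose $\ell$-vector push-forward blows up in $F_\ell$-norm, so that the reciprocal half of Lemma \ref{lemma: basic lemma1} delivers the desired failure of $\ell$-hyperbolicity.

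I would work in a trivializing chart $U \simeq \mathbb{B}^p$ around $x$, in which the push-forward is encoded by the $((\ell+1)\times p)$-matrix $A_n := A(x_n; (g_n)_*\tfrac{\partial}{\partial z_1}, \dotsc, (g_n)_*\tfrac{\partial}{\partial z_{\ell+1}})$. Since $F_{\ell+1}$ and the local determinant metric $h_{\ell+1, U}$ are equivalent on a compact neighbourhood of $x$ (containing the $x_n$ for $n$ large), the hypothesis says that the maximum of the absolute values of the $(\ell+1)\times(\ell+1)$ minors of $A_n$ tends to $\infty$. After passing to a subsequence I may fix the columns $j_1 < \dotsc < j_{\ell+1}$ realizing the maximum, so that the corresponding square submatrix $B_n \in \mathrm{GL}_{\ell+1}(\mathbb{C})$ satisfies $|\det B_n| \to \infty$. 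Now Lemma \ref{lemma: basic linear algebra} applied to $B_n$ yields, after a further subsequence, a fixed $\ell\times\ell$ minor of $B_n$, obtained by deleting one row $i_0$ and one column, whose modulus $|\det C_n|$ tends to $\infty$.

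The deleted row $i_0$ dictates the slice. Let $\iota : \mathbb{B}^\ell \hookrightarrow \mathbb{B}^{\ell+1}$ be the coordinate inclusion filling in $z_{i_0} = 0$, and set $h_n := g_n \circ \iota$, so that $h_n(0) = x_n \to x \notin \Delta$ and
\[
(h_n)_*(\tfrac{\partial}{\partial w_1} \wedge \dotsc \wedge \tfrac{\partial}{\partial w_\ell}) = (g_n)_*\big(\textstyle\bigwedge_{i \neq i_0} \tfrac{\partial}{\partial z_i}\big).
\]
In the chart, this $\ell$-vector is represented by the $(\ell \times p)$-submatrix of $A_n$ obtained by keeping the rows $i \neq i_0$; its $h_{\ell, U}$-norm is the maximum over all $\ell \times \ell$ minors, hence is at least $|\det C_n|$. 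Invoking once more the equivalence of $F_\ell$ with $h_{\ell, U}$ near $x$, I conclude that the $F_\ell$-norm of the push-forward tends to $\infty$, and the reciprocal direction of Lemma \ref{lemma: basic lemma1} shows that $X$ is not $\ell$-analytically hyperbolic modulo $\Delta$ — the contradiction sought. That the minimal locus may be even smaller (whence ``at least $\Delta$'') is automatic, since any $\Delta$ witnessing $\ell$-hyperbolicity then witnesses $(\ell+1)$-hyperbolicity.

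The only genuinely delicate point is the bookkeeping that lets the dimension of the source drop from $\ell+1$ to $\ell$. The blow-up hypothesis naturally produces a blow-up of an $(\ell+1)\times(\ell+1)$ determinant, and the crucial observation is that Lemma \ref{lemma: basic linear algebra} converts this into the blow-up of an $\ell \times \ell$ minor whose deleted \emph{row} is exactly what allows me to restrict $g_n$ to an $\ell$-dimensional coordinate slice, while the deleted \emph{column} costs nothing because $h_{\ell, U}$ is already defined as a maximum over all column choices. Keeping the base points $x_n$ inside a single compact neighbourhood of $x \notin \Delta$, so that the two metric equivalences apply with uniform constants, is the other point requiring a little care.
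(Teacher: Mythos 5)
Your proof is correct and follows essentially the same route as the paper's: contraposition, extraction of blowing-up maps via Lemma \ref{lemma: basic lemma1}, passage to the local determinant metrics $h_{\ell,U}$, reduction from an $(\ell+1)\times(\ell+1)$ minor to an $\ell\times\ell$ minor via Lemma \ref{lemma: basic linear algebra}, and restriction of $g_n$ to a coordinate slice before applying the reciprocal direction of Lemma \ref{lemma: basic lemma1}. The only difference is cosmetic bookkeeping: the paper re-orders the variables so that the deleted direction is the last one, whereas you track the deleted row $i_0$ explicitly.
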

\begin{proof}
Assume that $X$ is not $(\ell+1)$-analytically hyperbolic modulo $\Delta$. We show that it is not $\ell$-analytically hyperbolic modulo $\Delta$. By our assumption and the Lemma \ref{lemma: basic lemma1}, we can find a sequence of points $x_{n} \in X$, with $x_{n} \rightarrow x \in X\setminus \Delta$,  a sequence of holomorphic maps $f_{n}: \mathbb{B}^{\ell+1} \rightarrow X$ with  $f_{n}(0)=x_{n}$, and a sequence of pure multi-vectors $\xi_{n} \in \bigwedge^{\ell+1}T_{x_{n}}X$ with $|\xi_{n}|_{F_{\ell+1}(x_{n})}=1$ such that
\[
(f_{n})_{*}(\frac{\partial}{\partial z_{1}} \wedge \dotsc \wedge \frac{\partial}{\partial z_{\ell+1}})=R_{n} \xi_{n},
\]
where $R_n$ is a sequence of real numbers such that $R_{n} \rightarrow \infty$. 
 
  Let $U \simeq \mathbb{B}^{p}$ be a trivializing open set around $x$, relatively compact in $X \setminus \Delta$. We now  use the local Finsler metrics $h_{i, U}$ for $1\leq i \leq p$. For $1\leq j \leq \ell+1$, define $v_{j,n}:=(df_{n})_{0}(\frac{\partial}{\partial z_{j}})$; using the notations introduced earlier, we deduce (up to passing to a subsequence) that the determinant of a fixed extracted square matrix of maximal size, i.e. size $(\ell+1)$, of $A(x_{n}; v_{1,n}, \dotsc, v_{\ell+1,n}) \in M_{p, \ell+1}(\CC)$,  tends to infinity in norm. By Lemma \ref{lemma: basic linear algebra}, we deduce (up to passing to a subsequence) that the determinant of a fixed  square matrix of size $\ell$ of $A(x_{n}; v_{1,n}, \dotsc, v_{\ell+1,n})$ also tends to infinity in norm; we can always suppose (up to re-ordering the variables) that this matrix is obtained by keeping the first $\ell$ columns fixed. 
Now, consider the sequence of maps
\[
h_{n}: \mathbb{B}^{\ell} \rightarrow X, \ (z_{1}, \dotsc, z_{\ell}) \mapsto f_{n}(z_{1}, \dotsc, z_{\ell},0).
\]
Note that, by  construction, this sequence satisfies
 \[\big|(h_{n})_{*}(\frac{\partial}{\partial z_{1}} \wedge \dotsc \wedge \frac{\partial}{\partial z_{\ell}})\big|_{h_{\ell,U}(x_{n})} \rightarrow \infty.\] 
In particular, it follows that 
\[\big|(h_{n})_{*}(\frac{\partial}{\partial z_{1}} \wedge \dotsc \wedge \frac{\partial}{\partial z_{\ell}})\big|_{F_{\ell}(x_{n})} \rightarrow \infty.\] 
It now follows from    Lemma \ref{lemma: basic lemma1} that $X$ is not $\ell$-analytically hyperbolic modulo $\Delta$. This concludes the proof.
\end{proof}

\subsection{Finiteness of automorphism groups}
\label{sse: finiteness}
In this section we prove that, if  $X$ is  a compact complex manifold which is $(\dim(X)-1)$-analytically hyperbolic, then the automorphism group $\Aut(X)$ of $X$ is finite. As before, we start with a basic lemma in linear algebra.

\begin{lemma}
\label{lemma: basic lemma2} Let $|.|$ be a norm on $\CC^{p}$, 
let $\varepsilon > 0$ be a real number, and let $A \in M_{p}(\CC)$  be such that
\[
\underset{|v|=1}{\inf} |Av| \geq \varepsilon. 
\]
Then, there exists a real number $K>0$ independant of $A$ such that
\[
K \varepsilon^{p-1}\ ||A|| 
\leq 
|\det(A)|,
\]
where the   norm $||A||$ is computed with respect to $|.|$.  
\end{lemma}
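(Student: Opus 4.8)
The plan is to reduce to the case of a Hermitian norm, where the singular value decomposition makes the statement transparent, and then to transfer the resulting inequality to the arbitrary norm $|\cdot|$ using the equivalence of all norms in finite dimension. The point is that $\det$ is insensitive to the choice of norm, so only the two quantities $\inf_{|v|=1}|Av|$ and $\|A\|$ need to be compared across norms, and these comparisons introduce only constants depending on $|\cdot|$ (equivalently, on $p$), never on $A$ nor on $\varepsilon$.

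First I would fix the standard Hermitian norm $|\cdot|_2$ on $\CC^p$ together with constants $0 < c \leq C$ such that $c\,|v|_2 \leq |v| \leq C\,|v|_2$ for all $v \in \CC^p$. The hypothesis $\inf_{|v|=1}|Av| \geq \varepsilon$ is equivalent to $|Av| \geq \varepsilon\,|v|$ for every $v$; combining this with the two comparisons gives $|Av|_2 \geq \tfrac{1}{C}|Av| \geq \tfrac{\varepsilon}{C}|v| \geq \tfrac{c}{C}\varepsilon\,|v|_2$, so that the smallest singular value $\sigma_p(A)$ of $A$ relative to $|\cdot|_2$ satisfies $\sigma_p(A) \geq \tfrac{c}{C}\varepsilon$. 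An entirely analogous comparison of the induced operator norms yields $\|A\|_2 \geq \tfrac{c}{C}\|A\|$, where $\|\cdot\|_2$ denotes the operator norm attached to $|\cdot|_2$.

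Next, in the Hermitian setting I would invoke the singular value decomposition: writing $\sigma_1 \geq \dots \geq \sigma_p \geq 0$ for the singular values of $A$, one has $\|A\|_2 = \sigma_1$ and $|\det(A)| = \prod_{i=1}^{p}\sigma_i$. Since each $\sigma_i$ with $i \geq 2$ is at least $\sigma_p \geq \tfrac{c}{C}\varepsilon$, I obtain
\[
|\det(A)| = \sigma_1 \prod_{i=2}^{p}\sigma_i \geq \sigma_1 \Bigl(\tfrac{c}{C}\varepsilon\Bigr)^{p-1} = \|A\|_2\,\Bigl(\tfrac{c}{C}\varepsilon\Bigr)^{p-1} \geq \Bigl(\tfrac{c}{C}\Bigr)^{p}\,\|A\|\,\varepsilon^{p-1}.
\]
Thus $K := (c/C)^{p}$ works, and it is manifestly independent of $A$ (and of $\varepsilon$), as required.

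I do not expect a serious analytic obstacle here; the only delicate point is the bookkeeping of constants. One must check that the powers of $\varepsilon$ and of $\|A\|$ emerge exactly as stated—one factor $\tfrac{c}{C}$ coming from the operator-norm comparison and the remaining $(\tfrac{c}{C})^{p-1}$ from the smallest-singular-value comparison—and that all of these are absorbed into the single constant $K$ depending on the norm alone. The finite-dimensional equivalence of norms does all the substantive work, namely replacing the arbitrary $|\cdot|$ by a Hilbertian norm for which singular values, and hence the factorization of $|\det(A)|$, are available.
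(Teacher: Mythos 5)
Your proof is correct and takes essentially the same route as the paper. The paper uses the polar decomposition $A=US$ and diagonalizes the positive factor $S$, whose eigenvalues are exactly your singular values $\sigma_i$, then transfers to the arbitrary norm via equivalence of norms just as you do, arriving at the same constant $K=(c/C)^p$.
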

\begin{proof}
Let us first consider the case of the usual hermitian norm on $\CC^{p}$. Note that, by our assumption, the matrix $A$ is invertible. Now, 
consider the polar decomposition  
\[
A=US
\]
of $A$, where $U$ is a unitary matrix and $S$ is a positive definite hermitian matrix.  As $|\det(A)|=|\det(S)|$, and $|Av|=|USv|=|Sv|$ for any $v \in \CC^{N}$, it suffices to prove the lemma for $S$, i.e., we may and do assume that $A$ is a positive-definite hermitian matrix. 

Let $\varepsilon \leq \lambda_{1} \leq \dotsc \leq \lambda_{p}$ be the (real) eigen values of $A$, where the first inequality comes from the hypothesis $\underset{|v|=1}{\inf} |Av| \geq \varepsilon$. Let $(v_{1}, \dotsc, v_{p})$ be an orthonormal basis of $\CC^{p}$ such that $A$ is diagonal with respect to this basis,   and let $v \in \CC^{p}$ with $|v|=1.$ Then, we may  write   
\[
v
=
\sum\limits_{k=1}^{p} \alpha_{k}v_{k},
\]
where $\alpha_{k} \in \CC$ and $\sum\limits_{k=1}^{p} |\alpha_{k}|^{2}=1$. Therefore, we have that
\[
|Sv|
=
\sqrt{\sum\limits_{k=1}^{p} |\alpha_{k}|^{2}\lambda_{k}^{2}}
\leq 
\lambda_{p}
\leq
\lambda_{p} \frac{\lambda_{p-1}}{\varepsilon}\dotsc \frac{\lambda_{1}}{\varepsilon}
=
\frac{1}{\varepsilon^{p-1}}\det(S).
\]
This shows that the statement of the lemma holds when $|.|$ is the usual hermitian norm on $\mathbb{C}^p$ with $K=1$. 

To conclude the proof, let  $|.|$ be a norm on $\mathbb{C}^p$. Then,  there exist real numbers $0<c \leq C $ such that
\[
c. |.|_{\eucl} \leq |.| \leq C |.|_{\eucl},
\]
where $|.|_{\eucl}$ is the usual hermitian norm. Then, by the above, it follows readily that the lemma holds with  $K:=(\frac{c}{C})^{p}$.
\end{proof}

To prove the desired finiteness of $\Aut(X)$ when $X$ is $(\dim X-1)$-analytically hyperbolic, we will show that $\Aut(X)$ is discrete and compact. The discreteness is well-known, and we record it in the following lemma.
\begin{lemma}
\label{lem:discreteness_measure}
Let $X$ be a compact complex manifold. If $X$ is pseudo$^{top}$-$\dim X$-analytically hyperbolic, then $\Aut(X)$ is discrete.
\end{lemma}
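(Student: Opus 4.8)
The plan is to use the identification of $\Aut(X)$ with a complex Lie group and to translate a hypothetical positive-dimensional identity component into the degeneration of $e_{p}$ on an \emph{open} set, where $p=\dim X$. By the theorem of Bochner--Montgomery (see also \cite{Kobayashi}), the automorphism group of a compact complex manifold is a complex Lie group whose Lie algebra is the space $H^{0}(X,TX)$ of global holomorphic vector fields. Consequently $\Aut(X)$ is discrete if and only if $H^{0}(X,TX)=0$, and I would argue by contradiction, assuming there exists a nonzero $V \in H^{0}(X,TX)$. Since $X$ is compact, $V$ is complete and integrates to a holomorphic one-parameter group, that is, a holomorphic action
\[
\Phi \colon \CC \times X \to X, \qquad (t,x) \mapsto \exp(tV)(x),
\]
whose orbit through a point has velocity $V$ at $t=0$.

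Next I would manufacture a non-degenerate holomorphic map out of $\CC \times \mathbb{B}^{p-1}$. Pick $x_{0}$ with $V(x_{0})\neq 0$ and a holomorphic embedding $\iota \colon \mathbb{B}^{p-1} \to X$ with $\iota(0)=x_{0}$ whose image is a local hypersurface transverse to $V(x_{0})$, and set
\[
g \colon \CC \times \mathbb{B}^{p-1} \to X, \qquad g(t,w)=\Phi\big(t,\iota(w)\big).
\]
At the origin, $dg$ sends $\tfrac{\partial}{\partial t}$ to $V(x_{0})\neq 0$ and sends the remaining coordinate directions (using $\Phi(0,\cdot)=\Id$) isomorphically onto $T_{x_{0}}\iota(\mathbb{B}^{p-1})$, which is transverse to $V(x_{0})$; hence $dg_{(0,0)}$ is an isomorphism of $p$-dimensional spaces. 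Thus $g$ is a local biholomorphism near $(0,0)$, and its image contains an open neighbourhood $W$ of $x_{0}$, every point of which is the image of a non-degenerate point of $g$.

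Finally I would invoke the degeneration observation for $e_{p}$ recorded at the start of the section: for any holomorphic $f \colon \CC \times \mathbb{B}^{p-1} \to X$, precomposing with the dilations $\psi_{R}(z_{1},\dots,z_{p})=(Rz_{1},z_{2},\dots,z_{p})$ of $\mathbb{B}^{p}$ and letting $R \to \infty$ forces $e_{p}$ to vanish on the multivector $f_{*}\big(\tfrac{\partial}{\partial z_{1}}\wedge\dots\wedge\tfrac{\partial}{\partial z_{p}}\big)$ at every non-degenerate point; since $\bigwedge^{p}T_{y}X$ is a line, this says that $e_{p}$ degenerates at each such image point, which therefore lies in $\Delta_{X,p}$. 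Applying this to $g$ yields $W \subset \Delta_{X,p}$, contradicting the empty interior of $\Delta_{X,p}$ granted by the pseudo$^{top}$ hypothesis. Hence $H^{0}(X,TX)=0$ and $\Aut(X)$ is discrete.

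The routine inputs (completeness of $V$ and the Lie-theoretic dictionary) are standard, so I expect the main point to get right to be the final step, namely that the degeneracy set produced actually has \emph{nonempty interior}. This is exactly where $\ell=\dim X$ is essential: because the flow direction together with the $p-1$ transverse directions fills the full tangent space, $g$ is a local biholomorphism and its image is open, which is what allows a contradiction with the empty-interior condition. For $\ell<\dim X$ the analogous image of a flow would be a proper submanifold and would yield no contradiction, which is consistent with the lemma being stated only in the measure-hyperbolic ($\ell=\dim X$) regime.
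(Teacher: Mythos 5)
Your proof is correct and takes essentially the same approach as the paper: invoke Bochner--Montgomery to make $\Aut(X)$ a complex Lie group, and show that a nonzero holomorphic vector field would produce a non-degenerate holomorphic map $\CC \times \mathbb{B}^{p-1} \rightarrow X$ whose open image lies in the degeneracy set of $e_{p}$, contradicting the empty-interior condition. You simply spell out the flow-plus-transverse-slice construction and the dilation argument that the paper's one-line proof leaves implicit.
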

\begin{proof}
As $X$ is a compact complex variety (of dimension $p$), it is known that $\Aut(X)$ is a complex Lie-group by a theorem of Bochner-Montgomery (\cite{bochner1947groups}).  Accordingly, it must be discrete since otherwise, by considering a non-zero complex vector field on $X$, one can construct a non-degenerate holomorphic map
\[
F: \CC \times \mathbb{B}^{p-1} \rightarrow X.
\]
But this contradicts the pseudo $\dim(X)$-analytic hyperbolicity of $X$ as the image of such a map would have a non-empty interior and would be included in the degeneracy set of $e_{p}$.
\end{proof}

We now prove the finiteness of $\Aut(X)$, assuming $X$ is $(\dim X-1)$-analytically hyperbolic, by showing that it is compact.
\begin{proof}[Proof of Theorem \ref{thm: mainthm1}]
To prove the theorem, by Lemma \ref{lem:discreteness_measure}, it is   enough to prove that $\Aut(X)$ is compact.
To show this compactness, 
let $(f_{n})_{n \in \NN}$ be a sequence in $\Aut(X)$. By classic arguments involving Arzela-Ascoli theorem and convergence of holomorphic maps, in order to  prove that $(f_{n})_{n \in \NN}$ has an adherence value in $\Aut(X)$ (i.e. a converging subsequence) it  suffices to prove that 
$$ \underset{n \in \NN}{\liminf} \ \underset{x \in X}{\max} \ ||(df_{n})_{x}|| < \infty.$$
Arguing by contradiction, up to passing to a suitable subsequence, we can suppose that there exists $x_{n} \rightarrow x$, with $y_{n}=f_{n}(x_{n}) \rightarrow y$, satisfying $||(df_{n})_{x_{n}}|| \rightarrow \infty$, where the norm is computed with respect to $F_{1}$.

Let $U \overset{\varphi}{\simeq} \mathbb{B}^{p}$ be  a trivializing open set around $x$ with $\varphi(x)=0$, and let  $V \overset{\psi}{\simeq} \mathbb{B}^{p}$ be a trivializing open set around $y$ with $\psi(y)=0$. Replacing $x_n$ and $y_n$ by subsequences if necessary, we  may  assume that, for every $n$, $x_{n} \in U$ and $y_{n} \in  V$.  Define $z_{n}=\varphi(x_{n})$  and define $g_{n} \bydef \psi \circ f_{n} \circ \varphi^{-1}$; note that this function  $g_n$ is  well-defined in a neighborhood of $z_{n}$ (but maybe not on the whole set $\mathbb{B}^{p}$).

From now on, we are going to use the metrics $h_{\ell,U}$ and $h_{\ell,V}$ defined via the trivializations $\varphi$ and $\psi$, as described in the beginning of this section. 
Let us show that there exist real numbers $0<m \leq M $ such that, for every $n \in \NN$,
\[
m
\leq
\big|(f_{n}\circ \varphi^{-1})_{*}\big(\frac{\partial}{\partial z_{1}} \wedge \dotsc \wedge \frac{\partial}{\partial z_{p}}\big)\big|_{h_{p,V}(y_{n})}
\leq
M.
\]
Indeed, the upper bound follows immediately from the Lemma \ref{lemma: basic lemma1}. To prove the lower bound, let $A_{n} \in \mathrm{GL}_{p}(\CC)$ be the matrix representing the linear map $(dg_{n})_{z_{n}}$ with respect to the canonical basis of $\CC^{p}$, and observe that, by the definition of $h_{p,V}$,   the equality
\[
\big|(f_{n}\circ \varphi^{-1})_{*}\big(\frac{\partial}{\partial z_{1}} \wedge \dotsc \wedge \frac{\partial}{\partial z_{p}}\big)\big|_{h_{p,V}(y_{n})}
=
|\det(A_{n})|
\] holds.
Consider   $f_{n}^{-1}$ and $g_{n}^{-1}=\varphi \circ f_{n}^{-1} \circ \psi^{-1}$. Now, the matrix representing the linear map $(dg_{n}^{-1})_{g_{n}(z_{n})}$ with respect to the canonical basis of $\CC^{p}$ is $A_{n}^{-1}$, and similarly we have that
\[
\big|(f_{n}^{-1}\circ \psi^{-1})_{*}\big(\frac{\partial}{\partial z_{1}} \wedge \dotsc \wedge \frac{\partial}{\partial z_{p}}\big)\big|_{h_{p,U}(x_{n})}
=
|\det(A_{n}^{-1})|
=
\frac{1}{|\det(A_{n})|}.
\]
Thus, by applying once again   Lemma \ref{lemma: basic lemma1}, we obtain the desired lower bound.

Now, we show that, up to extracting and re-ordering the variables,  there exists $v_{n}=(\alpha_{n}^{1}, \dotsc, \alpha_{n}^{p}) \rightarrow v=(\alpha^{1}, \dotsc, \alpha^{p}) \in \CC^{p}$ with $|v_{n}|=|\alpha_{n}^{p}|=1$ such that
\[
\big|A_{n}v_{n}\big| \rightarrow 0,
\]
where we consider the sup norm on $\CC^{p}$, i.e., $|(v_{1}, \dotsc, v_{\ell})|= \underset{1 \leq i \leq \ell}{\max} |v_{i}|$.
Indeed, recall   that $||(df_{n})_{x_{n}}|| \rightarrow \infty$, so that $||A_{n}|| \rightarrow \infty$. Arguing by contradiction, there must exist $\varepsilon > 0$ such that, for every $v \in \CC^{p}$ with $|v|=1$ and every $n \in \NN$, 
\[
|A_{n}v| \geq \varepsilon.
\]
In this case, by   Lemma \ref{lemma: basic lemma2}, there is a real number $K>0$ such that $|\det(A_{n})| \geq K\varepsilon^{p-1}||A_{n}||$. However, as the latter diverges to infinity, this implies that $|\det(A_n)| \rightarrow \infty$. This  contradicts the $(\dim X)$-analytic hyperbolicity by   Lemma \ref{lemma: basic lemma1}, as 
\[\big|(f_{n}\circ \varphi^{-1})_{*}\big(\frac{\partial}{\partial z_{1}} \wedge \dotsc \wedge \frac{\partial}{\partial z_{p}}\big)\big|_{h_{p,V}(y_{n})}
=
|\det(A_{n})|.\]
To conclude the proof, note that
\begin{equation*}
\begin{aligned}
\big|(f_{n}\circ \varphi^{-1})_{*}(\frac{\partial}{\partial z_{1}} \wedge \dotsc \wedge \frac{\partial}{\partial z_{p}})\big|_{h_{p,V}(y_{n})}
&
=
\frac{1}{|\alpha_{n}^{p}|} \big|(f_{n}\circ \varphi^{-1})_{*}(\frac{\partial}{\partial z_{1}} \wedge \dotsc \wedge \frac{\partial}{\partial z_{p-1}}\wedge v_{n})\big|_{h_{p,V}(y_{n})}
\\
&
=
|\det(B_{n})|,
\end{aligned}
\end{equation*}
where  $B_{n}$  is defined to be the matrix $A_{n}$ except that its  last column (equal to $A_{n}\frac{\partial}{\partial z_{p}}$) has been replaced by $A_{n}v_{n}$. 
Now, developing the determinant with respect to the last column, since $A_{n}v_{n} \rightarrow 0$ and   $|\det(A_{n})|$ is bounded from below by $m>0$, we deduce that at least one minor of the matrix representing $\big(A_{n}\frac{\partial}{\partial z_{1}}, \dotsc, A_{n}\frac{\partial}{\partial z_{p-1}}\big)$ has its determinant tending to infinity in norm. This shows that 
\[
\big| (f_{n}\circ \varphi^{-1})_{*}(\frac{\partial}{\partial z_{1}} \wedge \dotsc \wedge \frac{\partial}{\partial z_{p-1}})\big|_{h_{p-1,V}(y_{n})}
\rightarrow \infty.
\]
By   Lemma \ref{lemma: basic lemma1}, we conclude that $X$ is not $(p-1)$-analytically hyperbolic. This contradiction concludes the proof. 
\end{proof}

 \subsection{Pseudo version}

\label{subs: pseudo}

This goal of this section is to obtain the same finiteness result as in the previous part, but by allowing an exceptional locus $\Delta=\Delta_{X,\ell}$ for the $(\dim(X)-1)$ analytic hyperbolicity assumption, and adding accordingly the $\dim(X)$-analytic hyperbolicity condition. Some technical assumptions are to be added on the exceptional locus $\Delta$, and we aim at proving the following slightly more general theorem than Theorem \ref{thm: mainthm2}.
\begin{theorem}
\label{thm: finiteness, pseudo}
Let $X$ be a compact \emph{K\"{a}hler} manifold of dimension $\dim(X)=p$, which is $p$-analytically hyperbolic, and $(p-1)$-analytically hyperbolic modulo $\Delta=\Delta_{X,\ell}$. Suppose furthermore that
\begin{center}
\begin{itemize}
\item{} the $(2p-3)$-Haussdorf measure of $\Delta$ is zero,
\item{} $\Delta$ has a projective neighborhood in $X$ (e.g. $X$, if $X$ is supposed projective).
\end{itemize}
\end{center}
Then $\Aut(X)$ is finite.
\end{theorem}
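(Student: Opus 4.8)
The plan is to run the scheme of Theorem \ref{thm: mainthm1}: by Lemma \ref{lem:discreteness_measure} the $p$-analytic hyperbolicity already forces $\Aut(X)$ to be discrete, so it suffices to prove that $\Aut(X)$ is compact, i.e. that every sequence $(f_n)\subset\Aut(X)$ admits a subsequence converging uniformly on $X$ to an element of $\Aut(X)$. Two preliminary observations drive everything. First, since $e_{p-1}$ is a biholomorphic invariant, its exceptional locus $\Delta=\Delta_{X,p-1}$ is preserved by every automorphism: $f(\Delta)=\Delta$ for all $f\in\Aut(X)$. Second, the Jacobians are \emph{pinched}: because $X$ is compact, $e_p$ is bounded above by a constant multiple of $F_p$, while $p$-analytic hyperbolicity bounds it below, so $e_p$ is comparable to $F_p$; combined with the invariance $e_p(f_n(x),(f_n)_*\xi)=e_p(x,\xi)$ this yields constants $0<m\le M$ with $m\le|\det(df_n)_x|\le M$ (computed in the local frames $h_{p,U}$) for all $n$ and all $x$. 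This is exactly the mechanism producing the bounds $m\le|\det A_n|\le M$ in the proof of Theorem \ref{thm: mainthm1}.

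The heart of the matter is a localized version of the dichotomy of Theorem \ref{thm: mainthm1}: I would show that if $\|(df_n)_{x_n}\|\to\infty$ with $x_n\to x$ and $y_n:=f_n(x_n)\to y$, then necessarily $x\in\Delta$ and $y\in\Delta$. Indeed, repeating the computation of Theorem \ref{thm: mainthm1} verbatim (the determinant bounds hold everywhere since $\Delta_{X,p}=\emptyset$), the blow-up of $\|(df_n)_{x_n}\|$ together with Lemmas \ref{lemma: basic lemma2} and \ref{lemma: basic linear algebra} produces a pure $(p-1)$-vector whose $h_{p-1}$-norm diverges at $y_n$, contradicting $(p-1)$-analytic hyperbolicity modulo $\Delta$ via Lemma \ref{lemma: basic lemma1} unless $y\in\Delta$; applying the same reasoning to $f_n^{-1}$ (whose differentials also blow up, since one of the $(p-1)$-minors of $A_n$, hence $\|A_n^{-1}\|$, diverges) forces $x\in\Delta$. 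Consequently, on every compact $K\subset X\setminus\Delta$ the differentials of $f_n$ and of $f_n^{-1}$ are locally bounded, both families are equicontinuous there, and by Arzela--Ascoli and a diagonal extraction along an exhaustion of $X\setminus\Delta$ I would pass to a subsequence with $f_n\to f$ and $f_n^{-1}\to g$ locally uniformly on $X\setminus\Delta$, where $f,g\colon X\setminus\Delta\to X$ are holomorphic and mutually inverse on a dense open set. The pinching passes to the limit, so $|\det df|\in[m,M]$ on $X\setminus\Delta$; in particular $f$ is a local biholomorphism there and contracts no hypersurface.

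It then remains to extend $f$ across $\Delta$ and to promote it to an automorphism. Here I would invoke Siu's extension theorem \cite{siu1975extension}: since $X$ is compact Kähler, $\Delta$ has a projective neighborhood, and the $(2p-3)$-Hausdorff measure of $\Delta$ vanishes, $f$ extends to a meromorphic map $\tilde f\colon X\dashrightarrow X$ (and likewise $g$ to $\tilde g$), whose indeterminacy locus is an analytic subset of $\Delta$, hence of codimension $\ge 2$. Because $f$ is a local biholomorphism with two-sided Jacobian bounds off $\Delta$, the map $\tilde f$ neither contracts nor creates a hypersurface, and the smallness of $\Delta$ forces the indeterminacy locus to be empty; together with $\tilde g\circ\tilde f=\Id$ on a dense open set this shows $\tilde f\in\Aut(X)$ with $\tilde f(\Delta)=\Delta$. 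Finally, to close the compactness argument I would pass to $h_n:=\tilde f^{-1}\circ f_n\in\Aut(X)$, which converges to $\Id$ on $X\setminus\Delta$ and still has pinched Jacobians; a Bishop-type volume bound on the graphs $\Gamma_{h_n}\subset X\times X$ (again using the Kähler hypothesis) rules out vertical bubbling over $\Delta$ and upgrades $h_n\to\Id$ to uniform convergence on $X$, whence $h_n=\Id$ for $n\gg 0$ by discreteness of $\Aut(X)$.

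I expect the main obstacle to be exactly this last passage near $\Delta$. Away from the degeneracy locus the argument of Theorem \ref{thm: mainthm1} adapts directly, but since the Jacobians are only pinched and not bounded, the differentials $df_n$ may genuinely blow up as one approaches $\Delta$, so equicontinuity cannot be asserted there. Turning the locally uniform limit on $X\setminus\Delta$ into a bona fide element of $\Aut(X)$ — in particular controlling the possible bubbling of the graphs over the small set $\Delta$ — is precisely where the Kähler hypothesis, the Hausdorff-measure condition on $\Delta$, and Siu's extension theorem become indispensable.
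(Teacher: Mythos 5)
Your overall strategy is the paper's strategy: discreteness via Lemma \ref{lem:discreteness_measure}, the dichotomy of Theorem \ref{thm: mainthm1} run off $\Delta$ (using the absolute $p$-analytic hyperbolicity for the determinant pinching and the $(p-1)$-hyperbolicity modulo $\Delta$ for the blow-up alternative) to extract limits $f, g$ holomorphic on $X\setminus\Delta$ and mutually inverse on a dense open set, then Siu's theorem to extend them meromorphically. That part of your argument is sound and matches the paper's Lemma \ref{lemma: aut}. However, there are two genuine gaps, and they sit exactly at the two technically hard points.

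First, you dispose of the indeterminacy locus of $\tilde f$ by asserting that since $\tilde f$ neither contracts nor creates a hypersurface, ``the smallness of $\Delta$ forces the indeterminacy locus to be empty.'' This inference is false as a general principle: a bimeromorphic map which is an isomorphism in codimension one can perfectly well have nonempty indeterminacy locus (a flop contracts no divisor, yet is not defined along a codimension-$2$ center), and an analytic set of codimension $2$ already has zero $(2p-3)$-Hausdorff measure, so no appeal to smallness can rule it out. This is precisely where the paper uses the hypothesis that $\Delta$ has a \emph{projective} neighborhood $N$ --- a hypothesis your proof never uses in any essential way (you cite it when invoking Siu, but Siu's theorem needs only the K\"ahler and Hausdorff-measure assumptions). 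The paper's argument is: choose an irreducible hypersurface $H\subset N$ with $N\setminus H$ Stein and $I_f\not\subset H$; show $A:= I_g\cap\bigl(f(N)\setminus f(H)\bigr)$ is nonempty (otherwise $I_f\subset g(f(H))=H$, excluded by the choice of $H$); then $g$ maps the open set $\bigl(f(N)\setminus f(H)\bigr)\setminus A$ into the Stein manifold $N\setminus H$ with $\codim(A)\geq 2$, so by Andreotti's extension theorem $g$ extends holomorphically across $A$, contradicting the definition of $I_g$. Without an argument of this kind, your $\tilde f$ is only a bimeromorphic self-map, not an element of $\Aut(X)$.

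Second, your closing step --- pass to $h_n=\tilde f^{-1}\circ f_n$, which converges to $\Id$ off $\Delta$, and upgrade to uniform convergence on all of $X$ via ``a Bishop-type volume bound on the graphs'' --- is only a sketch, and the required volume bound is not free: the volume of $\Gamma_{h_n}\subset X\times X$ is governed by the action of $h_n^*$ on $H^{1,1}(X)$, which for automorphisms of a compact K\"ahler manifold need not stay bounded along a sequence (positive-entropy automorphisms are the standard warning); you would have to extract such a bound from the convergence off $\Delta$, which is genuine additional work. The paper sidesteps this entirely: it never upgrades to uniform convergence on $X$. Instead it considers the injective restriction map $r:\Aut(X)\rightarrow\Aut(X\setminus\Delta)$ for the topology of uniform convergence on compacts of $X\setminus\Delta$, notes that Lemma \ref{lemma: aut} makes $G=r(\Aut(X))$ compact, applies Bochner--Montgomery to give $G$ the structure of a compact real Lie group, and uses that $G$ is at most countable (since $\Aut(X)$ is discrete) to conclude $\dim G=0$, hence $G$ is finite, hence so is $\Aut(X)$. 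I recommend replacing your last paragraph by this soft argument, which removes the bubbling analysis altogether.
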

For the notion of Haussdorf measure, we refer for instance to \cite{eisenman1970intrinsic}.
Let us start with the following elementary proposition:
\begin{proposition}
\label{lemma: maps}
Let $X, Y$ be compact complex manifolds of same dimension, with $X$ $\ell$-analytically hyperbolic modulo $\Delta=\Delta_{X, \ell}$. Suppose there exists a (non-degenerate) holomorphic map $f: Y \rightarrow X$. Then $Y$ is $\ell$-analytically  hyperbolic modulo $f^{-1}(\Delta) \cup \Degen(f)$, where $\Degen(f)$ is the set of points where the differential $df$ is not an isomorphism.
\end{proposition}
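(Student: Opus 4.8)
The plan is to exhibit a witnessing pseudo-metric on $Y$ by pulling back along $f$ the one witnessing the hyperbolicity of $X$. By hypothesis there is a continuous Finsler pseudo-metric $\omega$ on pure multi-vectors of $\bigwedge^{\ell}TX$, a metric on $X \setminus \Delta$, with $e_{\ell} \geq \omega$ on $X$. Since $\dim Y = \dim X = p$, the differential of $f$ induces fiberwise a linear map $\bigwedge^{\ell}df : \bigwedge^{\ell}T_{y}Y \to \bigwedge^{\ell}T_{f(y)}X$ sending a pure multi-vector $v_{1}\wedge\dots\wedge v_{\ell}$ to the pure multi-vector $df(v_{1})\wedge\dots\wedge df(v_{\ell})$. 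I would then define, on pure multi-vectors of $\bigwedge^{\ell}TY$,
\[
(f^{*}\omega)(y;\eta) := \omega\big(f(y);\ \textstyle\bigwedge^{\ell}df(\eta)\big),
\]
which is well-defined (it depends only on $\eta$, not on its chosen decomposition) and continuous, being a composition of continuous maps. It then remains to check the two defining properties of $\ell$-analytic hyperbolicity modulo $f^{-1}(\Delta)\cup\Degen(f)$.

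The central point is the inequality $e_{\ell} \geq f^{*}\omega$ on all of $Y$, which follows from the decreasing property of the Kobayashi-Eisenmann pseudo-metric under the holomorphic map $f$. Indeed, for any competitor $g : \mathbb{B}^{\ell} \to Y$ with $g(0)=y$ and $g_{*}(\frac{\partial}{\partial z_{1}}\wedge\dots\wedge\frac{\partial}{\partial z_{\ell}}) = R\,\eta$, the composite $f\circ g : \mathbb{B}^{\ell} \to X$ is holomorphic, sends $0$ to $f(y)$, and satisfies $(f\circ g)_{*}(\frac{\partial}{\partial z_{1}}\wedge\dots\wedge\frac{\partial}{\partial z_{\ell}}) = R\cdot\bigwedge^{\ell}df(\eta)$; taking the infimum over all such $g$ yields $e_{\ell}(f(y);\bigwedge^{\ell}df(\eta)) \leq e_{\ell}(y;\eta)$ (the inequality being trivial when no competitor exists, or when $\bigwedge^{\ell}df(\eta)=0$). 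Composing with $e_{\ell}\geq\omega$ on $X$ then gives $e_{\ell}(y;\eta) \geq \omega(f(y);\bigwedge^{\ell}df(\eta)) = (f^{*}\omega)(y;\eta)$.

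Next I would verify that $f^{*}\omega$ is a genuine metric on $Y \setminus (f^{-1}(\Delta)\cup\Degen(f))$: for such $y$ one has $f(y)\notin\Delta$, so $\omega$ is a metric at $f(y)$, while $y\notin\Degen(f)$ means $df_{y}$ is an isomorphism, hence carries any nonzero pure multi-vector $\eta$ to a nonzero pure multi-vector; thus $(f^{*}\omega)(y;\eta) = \omega(f(y);\bigwedge^{\ell}df(\eta)) > 0$. Finally, $f^{-1}(\Delta)\cup\Degen(f)$ must be seen to be closed of empty interior: $\Degen(f)$ is a proper analytic subset since $f$ is non-degenerate, and $f^{-1}(\Delta)$ is closed with empty interior because, away from $\Degen(f)$, the map $f$ has maximal rank and is therefore an open map, so any open subset of $f^{-1}(\Delta)$ would be sent into the empty-interior set $\Delta$, a contradiction. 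As a finite union of closed nowhere-dense sets is again closed and nowhere-dense, this degeneracy set is admissible.

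The one step demanding care is the decreasing property, and specifically the compatibility $(f\circ g)_{*} = \bigwedge^{\ell}df \circ g_{*}$ together with the preservation of purity and of the scalar $R$; but both are immediate from $\bigwedge^{\ell}df(v_{1}\wedge\dots\wedge v_{\ell}) = df(v_{1})\wedge\dots\wedge df(v_{\ell})$. Everything else is routine verification against the definitions of Section \ref{section: basics}.
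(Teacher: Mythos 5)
Your proof is correct and follows essentially the same route as the paper's: pull back the witnessing continuous Finsler pseudo-metric $\omega$ along $f$, and combine the distance-decreasing property $f^{*}e_{\ell,X}\leq e_{\ell,Y}$ with the inequality $e_{\ell,X}\geq\omega$ to conclude that $f^{*}\omega$ witnesses $\ell$-analytic hyperbolicity of $Y$ modulo $f^{-1}(\Delta)\cup\Degen(f)$. The only difference is one of detail: you spell out the competitor argument for the decreasing property, the non-degeneracy of $f^{*}\omega$ off the exceptional set, and the fact that $f^{-1}(\Delta)\cup\Degen(f)$ is closed with empty interior, all of which the paper treats as immediate.
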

\begin{proof}
By hypothesis, there exists $\omega$ a continuous pseudo-metric on (pure multi-vectors of) $\bigwedge^{\ell} TX$, which is a metric on $X\setminus \Delta$ and such that $e_{\ell,X} \geq \omega$ on $X$.
It is straightforward to check that, as $f$ is holomorphic, almost by definition, 
\[
f^{*}e_{\ell,X} \leq e_{\ell, Y}.
\]
Consider then $\omega'\bydef f^{*}\omega$, which is a continuous pseudo-metric on (pure multi-vectors of) $\bigwedge^{\ell} TX$, a metric on $X\setminus \big(\Degen(f) \cup f^{-1}(\Delta)\big)$, and satisfies
\[
\omega' \leq e_{\ell, Y}
\]
on $X$, so that $Y$ is indeed $\ell$-analytically hyperbolic modulo $f^{-1}(\Delta) \cup \Degen(f)$.
\end{proof}

Applying this in particular to $f$ and $f^{-1}$ in $\Aut(X)$ yields the following corollary
\begin{corollary}
Let $X$ be a compact complex variety which is $\ell$-analytically hyperbolic modulo $\Delta=\Delta_{X, \ell}$. Then any automorphism $f\in \Aut(X)$ induces an automorphism $f \in \Aut(X\setminus \Delta)$, i.e. $f(\Delta)=\Delta$.
\end{corollary}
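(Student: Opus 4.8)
The plan is to apply Proposition \ref{lemma: maps} in the special case $Y = X$, where both the source and target are the same $\ell$-analytically hyperbolic manifold. For an automorphism $f \in \Aut(X)$, the map $f$ is a biholomorphism, so its differential $df$ is an isomorphism at \emph{every} point; hence $\Degen(f) = \emptyset$. Proposition \ref{lemma: maps} then tells us that $X$ is $\ell$-analytically hyperbolic modulo $f^{-1}(\Delta) \cup \Degen(f) = f^{-1}(\Delta)$.

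The key step is then to invoke the minimality of the exceptional locus $\Delta = \Delta_{X,\ell}$, recorded in Remark \ref{remark: smallest}: by definition $\Delta_{X,\ell}$ is the \emph{smallest} closed subset modulo which $X$ is $\ell$-analytically hyperbolic. Since we have just shown that $X$ is $\ell$-analytically hyperbolic modulo $f^{-1}(\Delta)$, and since $f$ is a biholomorphism so that $f^{-1}(\Delta)$ is again a proper closed subset of the appropriate type, minimality forces the inclusion
\[
\Delta \subseteq f^{-1}(\Delta),
\]
that is, $f(\Delta) \subseteq \Delta$. Applying the identical argument to the automorphism $f^{-1} \in \Aut(X)$ yields $f^{-1}(\Delta) \subseteq \Delta$, i.e. $\Delta \subseteq f(\Delta)$. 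Combining the two inclusions gives $f(\Delta) = \Delta$, so $f$ restricts to a biholomorphism of $X \setminus \Delta$ onto itself, i.e. $f \in \Aut(X \setminus \Delta)$.

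I expect no serious obstacle here, as the statement is essentially a formal consequence of the functoriality established in Proposition \ref{lemma: maps} together with the well-definedness of the minimal locus $\Delta_{X,\ell}$ from Remark \ref{remark: smallest}. The only point requiring a little care is to ensure that the \emph{type} of closed set (topologically closed, or analytically closed in the pseudo$^{an}$ case) is preserved under the biholomorphism $f$, so that $f^{-1}(\Delta)$ is genuinely a competitor in the class of sets over which minimality is asserted; this is immediate since biholomorphisms preserve both topological and analytic closedness.
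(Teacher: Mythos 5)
Your proof is correct and matches the paper's own argument: both apply Proposition \ref{lemma: maps} to $f$ (noting $\Degen(f)=\emptyset$ for an automorphism) to get $\ell$-analytic hyperbolicity modulo $f^{-1}(\Delta)$, invoke minimality of $\Delta_{X,\ell}$ to conclude $\Delta \subseteq f^{-1}(\Delta)$, and then repeat with $f^{-1}$ for the reverse inclusion. Your added remarks on the emptiness of the degeneracy set and the preservation of the type of closed set are just explicit versions of what the paper leaves implicit.
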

\begin{proof}
By the Lemma \ref{lemma: maps} applied to $f$, $X$ is $\ell$-analytically hyperbolic modulo $f^{-1}(\Delta)$, so that necessarily $\Delta \subset f^{-1}(\Delta)$, as $\Delta=\Delta_{X, \ell}$ is the smallest closed set modulo which $X$ is $p$-analytically hyperbolic. Applying it to $f^{-1}$ yields the reverse inclusion, and proves the corollary.
\end{proof}

Let us recall the notion of meromorphic map (in the sense of Remmert). A meromorphic map $f: X \dashrightarrow Y$ between two complex manifolds $X,Y$ is a correspondance satisfying the following conditions:
\begin{itemize}
\item{} $\forall \ x \in X$, $f(x)$ is a non-empty compact complex subspace of $Y$,
\item{} the graph $G_{f}=\{(x,y) \in X\times Y \ | \ y \in f(x) \}$ is a connected complex subspace of 
$X \times Y$, with $\dim G_{f}=\dim X$,
\item{} there exists a dense subset $X^{*}$ of $X$ such that for all $x \in X^{*}$, $f(x)$ is a single point.
\end{itemize}
Let $\pi:X\times Y \rightarrow X$ be the first projection, $E\subset G_{f}$ the set of points where $\pi$ degenerates, i.e.
\[
E\bydef \{(x,y) \in G_{f} \ | \ \dim f(x) > 0\},
\]
and $S= \pi(E)$, called the \emph{singular locus of $f$}. All we need to know here is that $S$ is a closed complex subspace of $X$ of codimension $\geq 2$, and $f: X \setminus S \rightarrow Y$ is holomorphic (see e.g. \cite{remmert1957holomorphe}). 

Letting $\Delta \subsetneq X$ be a closed subset (of empty interior), a key property that we will need is the following:
\begin{center}
$(\mathcal{P}_{\Delta})$
\ \
Every holomorphic map $f: X \setminus \Delta \rightarrow X$ extends to a meromorphic map $f: X \dashrightarrow X$.
\end{center}

Now, an important result to prove Theorem \ref{thm: finiteness, pseudo} is the following lemma, whose idea can essentially by found in a paper of Yau (\cite{yau1975intrinsic})
\begin{lemma}
\label{lemma: aut}
Let $X$ be a compact complex manifold which is $\dim(X)$-analytically hyperbolic, $(\dim(X)-1)$-analytically hyperbolic modulo $\Delta=\Delta_{X, \ell}$, and satisfies the property $\mathcal{P}_{\Delta}$. Suppose furthermore that $\Delta$ has a projective neighborhood in $X$. Then from any sequence $(f_{n})_{n} \in \Aut(X)^{\NN}$, we can extract a subsequence that converges uniformly on any compact of $X \setminus \Delta$ to an element of $\Aut(X \setminus \Delta)$, which extends to an element of $\Aut(X)$.
\end{lemma}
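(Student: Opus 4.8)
The plan is to transport the compactness argument from the proof of Theorem \ref{thm: mainthm1} to the open manifold $X \setminus \Delta$, and then to glue the resulting limit back across $\Delta$ by means of the extension property $\mathcal{P}_{\Delta}$. The first reduction is that, by the corollary following Proposition \ref{lemma: maps}, every $f_{n}$ preserves the exceptional locus ($f_{n}(\Delta)=\Delta$), so each $f_{n}$ restricts to an automorphism of $X \setminus \Delta$; the same holds for $f_{n}^{-1}$. Thus the whole problem takes place on $X \setminus \Delta$, where we will first produce limits $f_{\infty}, g_{\infty}$ by normal families, and then extend them to $X$.

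The main analytic step is a local derivative bound: for every compact $K \subset X \setminus \Delta$, after passing to a subsequence, $\sup_{n}\sup_{x \in K}\|(df_{n})_{x}\|_{F_{1}}<\infty$, and similarly for $(f_{n}^{-1})$. I would argue by contradiction exactly as in Theorem \ref{thm: mainthm1}: if $\|(df_{n})_{x_{n}}\|\to\infty$ with $x_{n}\in K$, extract so that $x_{n}\to x\in K$ (hence $x\notin\Delta$) and $y_{n}=f_{n}(x_{n})\to y\in X$, where $y$ may a priori lie in $\Delta$. Writing $A_{n}$ for the Jacobian matrix of $f_{n}$ at $x_{n}$ in trivializing charts around $x$ and $y$, the $\dim(X)$-analytic hyperbolicity of $X$ applied to $f_{n}$ and to $f_{n}^{-1}$ through Lemma \ref{lemma: basic lemma1} bounds $|\det(A_{n})|$ above and below by positive constants, exactly as in the absolute proof (this step is insensitive to $\Delta$, since $\dim(X)$-hyperbolicity is absolute). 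The one place where the absolute proof invoked $(\dim(X)-1)$-hyperbolicity was at the image point $y$; I avoid the possibility $y\in\Delta$ by generating the degenerating $(\dim(X)-1)$-family with $f_{n}^{-1}$ instead of $f_{n}$. Indeed, since $|\det(A_{n})|$ stays bounded while $\|A_{n}\|\to\infty$, the smallest singular value of $A_{n}$ tends to $0$, so $\|A_{n}^{-1}\|\to\infty$; applying Lemma \ref{lemma: basic lemma2} to $A_{n}^{-1}$ produces a near-kernel direction, and the resulting maps $\BB^{p-1}\to X$ are now \emph{based at} $f_{n}^{-1}(y_{n})=x_{n}\to x\notin\Delta$. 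By Lemma \ref{lemma: basic lemma1} this contradicts the $(\dim(X)-1)$-analytic hyperbolicity of $X$ modulo $\Delta$. Running the same argument on $(f_{n}^{-1})$ gives the bound for the inverses. With both bounds in hand, Arzel\`a--Ascoli together with a diagonal extraction over an exhaustion of $X \setminus \Delta$ yields a subsequence with $f_{n}\to f_{\infty}$ and $f_{n}^{-1}\to g_{\infty}$ uniformly on compacts of $X \setminus \Delta$, where $f_{\infty},g_{\infty}\colon X \setminus \Delta \to X$ are holomorphic.

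Next I would control the Jacobian of the limit. Since $X$ is compact and $\dim(X)$-analytically hyperbolic, $e_{p}$ is comparable to $F_{p}$ from above and below (the lower bound is hyperbolicity, the upper bound is automatic on a compact manifold); as each $f_{n}$ is biholomorphic we have $f_{n}^{*}e_{p}=e_{p}$, so the $F_{p}$-Jacobian of every $f_{n}$ is bounded above and below by fixed positive constants on all of $X$. Passing to the limit, with $df_{n}\to df_{\infty}$ by the Cauchy estimates, shows that $|\det(df_{\infty})|$ and $|\det(dg_{\infty})|$ are bounded below on $X \setminus \Delta$; in particular $f_{\infty}$ and $g_{\infty}$ are local biholomorphisms, hence open maps, on $X \setminus \Delta$. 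On the non-empty open locus $W=\{x\in X\setminus\Delta : f_{\infty}(x)\notin\Delta\}$ the relation $f_{n}^{-1}\circ f_{n}=\Id$ passes to the limit (using uniform convergence of $f_{n}^{-1}$ on a compact neighbourhood of $f_{\infty}(x)$ in $X\setminus\Delta$) to give $g_{\infty}\circ f_{\infty}=\Id$ there, and symmetrically $f_{\infty}\circ g_{\infty}=\Id$ on a non-empty open set; thus $f_{\infty}$ and $g_{\infty}$ are mutually inverse wherever the compositions stay off $\Delta$.

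Finally I would extend across $\Delta$. By the hypothesis $\mathcal{P}_{\Delta}$, both $f_{\infty}$ and $g_{\infty}$ extend to meromorphic self-maps $\hat f_{\infty},\hat g_{\infty}\colon X \dashrightarrow X$; since two meromorphic maps that agree on a non-empty open subset of the connected manifold $X$ coincide, the relations above upgrade to $\hat g_{\infty}\circ \hat f_{\infty}=\Id$ and $\hat f_{\infty}\circ \hat g_{\infty}=\Id$, so $\hat f_{\infty}$ and $\hat g_{\infty}$ are bimeromorphic inverses. The uniform lower bound on the Jacobian forbids $\hat f_{\infty}$ from contracting any divisor, and it is here that the hypothesis that $\Delta$ has a projective neighborhood is used, in the spirit of Yau \cite{yau1975intrinsic}, to promote this nowhere-contracting bimeromorphic correspondence to a genuine biholomorphism $\hat f_{\infty}\in\Aut(X)$ restricting to $f_{\infty}$ on $X \setminus \Delta$. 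Once $\hat f_{\infty}\in\Aut(X)$, the corollary following Proposition \ref{lemma: maps} shows it preserves $\Delta$, so $f_{\infty}=\hat f_{\infty}|_{X\setminus\Delta}\in\Aut(X\setminus\Delta)$, as required. I expect this last step — making sense of the meromorphic extension near $\Delta$, ruling out indeterminacy and contraction, and thereby upgrading a bimeromorphic self-map to an honest automorphism — to be the main obstacle; the derivative estimate of the second paragraph, by contrast, is a fairly direct adaptation of Theorem \ref{thm: mainthm1} once one thinks to base the degenerating family off $\Delta$ via $f_{n}^{-1}$.
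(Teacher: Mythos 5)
Your analysis up to the meromorphic extension step is correct, and in one respect it is \emph{more} careful than the paper itself. The paper claims the proof of Theorem \ref{thm: mainthm1} applies ``verbatim'' on $X\setminus\Delta$, but, as you observe, the limit $y$ of the image points $y_{n}=f_{n}(x_{n})$ may lie in $\Delta$, where $(\dim X-1)$-analytic hyperbolicity modulo $\Delta$ gives no contradiction; your repair --- generating the degenerating $(p-1)$-dimensional family through $f_{n}^{-1}$, so that the blowing-up multi-vectors are based at $x_{n}\rightarrow x\notin\Delta$, and then invoking Lemma \ref{lemma: basic lemma1} --- is exactly what is needed. Your two-sided Jacobian bound via the invariance $f_{n}^{*}e_{p}=e_{p}$ and the comparability of $e_{p}$ with $F_{p}$ on a compact manifold is a clean substitute for the paper's contradiction argument, and your derivation that the extensions $\hat{f}_{\infty},\hat{g}_{\infty}$ furnished by $\mathcal{P}_{\Delta}$ are bimeromorphic inverses agrees with the paper.

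The genuine gap is the final step, which you yourself flag as ``the main obstacle'': nothing in your proposal shows that the indeterminacy loci $I_{\hat{f}_{\infty}}$ and $I_{\hat{g}_{\infty}}$ are empty, and the principle you appeal to --- that a bimeromorphic self-map contracting no divisor, with Jacobian bounded below, must be an automorphism --- is false. Pseudo-automorphisms (bimeromorphic self-maps that are isomorphisms in codimension one, e.g. those arising from flops) contract no divisor, are local biholomorphisms off a codimension-two set, have Jacobian bounded above and below with respect to smooth metrics, and still fail to be automorphisms; so the Jacobian bound plus ``the spirit of Yau'' cannot close the argument. This is precisely where the paper does its real work with the projective neighborhood $N$ of $\Delta$: it first establishes $f(X\setminus I_{f})=X\setminus I_{g}$ and $g(X\setminus I_{g})=X\setminus I_{f}$, hence $I_{g}\subset f(I_{f})$ and $I_{f}\subset g(I_{g})$; then, assuming $I_{f}\neq\emptyset$, it chooses an irreducible hypersurface $H\subset N$ with $N\setminus H$ Stein and $I_{f}\not\subset H$, shows that $A:=I_{g}\cap\bigl(f(N)\setminus f(H)\bigr)$ is nonempty (otherwise $I_{f}\subset g(f(H))=H$, contradicting the choice of $H$), and finally extends $g$ holomorphically across the codimension-$\geq 2$ set $A$ into the Stein manifold $N\setminus H$ by Andreotti's extension theorem, contradicting the definition of $I_{g}$. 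Only this yields $I_{f}=I_{g}=\emptyset$, hence $f\in\Aut(X)$. Without this mechanism (or an equivalent one), your argument produces a pseudo-automorphism extending $f_{\infty}$, not an element of $\Aut(X)$.
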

\begin{proof}
The reasoning of the proof of \ref{thm: mainthm1} works verbatim, and allows to show that $(f_{n})_{n \in \NN}$ admits a converging subsequence to an element $f$ of $\Hol(X\setminus \Delta, X)$.
Similarly, up to extraction, we can suppose that $(f_{n}^{-1})_{n \in \NN}$ converges to $g \in \Hol(X \setminus \Delta, X)$. 

Observe that there exists $m>0$ such that for all $x \in X \setminus \Delta$, the following inequality is satisfied
\[
|\det(df_{x})| \geq m > 0,
\]
where $|.|$ is a fixed continuous metric on $\bigwedge^{p}\Omega_{X}$ ($p=\dim(X))$. Indeed, it is enough to prove that such an inequality is satisfied for $f_{n}$ for all $n \in \NN$ and all $x \in X$.
Arguing by contradiction, by compacity of $X$, we can construct $x_{n_{k}} \rightarrow x$, with $y_{n_{k}}=f_{n_{k}}(x_{n_{k}}) \rightarrow y$, such that $\big|\det\big((df_{n_{k}})_{x_{n_{k}}}\big)\big| \rightarrow 0$. But this implies that
\[
\big|\det\big((df_{n_{k}}^{-1})_{y_{n_{k}}}\big)\big| \rightarrow \infty,
\]
which allows to contradict the $\dim(X)$-analytic hyperbolicity via Lemma \ref{lemma: basic lemma1} in the same fashion that it was done in the course of the proof of Theorem \ref{thm: mainthm1}. Obviously, we can suppose that the same inequality is satisfied for $g$.

By the property $(\mathcal{P}_{\Delta})$, $f$ and $g$ extends meromorphically to $X$. Let then $I_{f}$ (resp. $I_{g}$) be the singular locus of the meromorphic map $f$ (resp. $g$), which is of codimension greater than $2$, and observe that $(df)_{x}$ (resp. $(dg)_{x}$) is invertible for all $x \in X \setminus I_{f}$ (resp. $X \setminus I_{g}$): indeed, this follows from the continuity of $df$ on $X \setminus I_{f}$ combined with the density of $X \setminus \Delta$ in $X \setminus I_{f}$ and the previous inequality we have just shown.
Accordingly, $f$ has a local inverse everywhere on $X \setminus I_{f}$. But observe that $g$ is inverse to $f$ on the dense open set $f(X \setminus \Delta)\cap(X \setminus \Delta)$ so that $g$ must actually extend holomorphically to $f(X \setminus I_{f})$. Therefore, we deduce that $f(X \setminus I_{f}) \subset X\setminus I_{g}$, and by doing the same reasoning for $g$, we obtain aswell that $g(X \setminus I_{g}) \subset X\setminus I_{f}$. But as we necessarily have $g\circ f= \Id_{X \setminus I_{f}}$ and $f\circ g=\Id_{X \setminus I_{g}}$, we actually deduce the following equalities
\begin{equation*}
\label{eq: eq1}
\begin{aligned}
f(X \setminus I_{f})=X \setminus I_{g} \ \text{and} \ g(X \setminus I_{g}) = X\setminus I_{f}.
\end{aligned}
\end{equation*}
Recall that for $S$ any set in $X$, the image of $S$ by the meromorphic map $f$ is defined as
\[
\pi_{2}(G_{f}\cap (S\times X)),
\]
where $\pi_{2}$ is the projection onto the second factor. Since $f(X)=X$ ($G_{f} \subset X \times X$ is closed and so is its image by $\pi_{2}$ as it is a proper map; the equality follows as this projection contains a dense open set of $X$), by writing
\[
X=f(X)=f(X \setminus I_{f}) \cup f(I_{f})=(X \setminus I_{g}) \cup f(I_{f}),
\]
we deduce that $I_{g} \subset f(I_{f})$, and similarly, we obtain that $I_{f} \subset g(I_{g})$.

The goal is to prove that $I_{f}=I_{g}=\emptyset$. Arguing by contradiction, we suppose that it is not empty.
Let then $N$ be a projective neighborhood of $\Delta$ (which exists by hypothesis), and $H$ an irreducible hypersurface of $N$ such that $N \setminus H$ is Stein, and $H\cap I_{f} \neq I_{f}$. Observe that $f(N)$ is an open set, and that $f(H)$ is a subvariety of $f(N)$. Furthermore, defining
\[
A\bydef I_{g} \cap \big(f(N)\setminus f(H)\big),
\]
we prove that $A$ is not the empty set.
Indeed, otherwise we have the inclusion $I_{g} \subset f(H)$, so that, using the above, we deduce that
\[
I_{f} \subset g(I_{g}) \subset g(f(H)).
\]
But $g(f(H))$ is a closed irreducible analytic set, which contains the irreducible open set $H \setminus I_{f}$, so that necessarily $g(f(H))=H$, and thus $I_{f} \subset H$, which was excluded by construction.

To conclude, observe that $g$ maps the open set $\big(f(N)\setminus f(H)\big) \setminus A$ into the Stein manifold $N\setminus H$, with $\codim(A)\geq 2$, so that $g$ extends to a holomorphic map through $A\neq \emptyset$ (see e.g.\cite{Andreotti_extension}[Theorem 2, p.316]), which yields a contradiction to the very definition of $I_{g}$. Therefore, we deduce that $I_{g}=\emptyset$. Similarly, we obtain that $I_{f}=\emptyset$. Accordingly, $f$ and $g$ are defined on $X$, and as 
\[
f\circ g=\Id_{X \setminus I_{g}}=\Id_{X}=\Id_{X \setminus I_{f}}=g\circ f,
\]
we deduce that $f$ is indeed in $\Aut(X)$.
And by Proposition \ref{lemma: maps}, we know that $f$ induces aswell an element in $\Aut(X \setminus \Delta)$.

\end{proof}

We are now in position to prove the announced theorem.

\begin{proof}[Proof of Theorem \ref{thm: finiteness, pseudo}]
We equip $X$ with an hermitian metric, with which we define the topology of uniform convergence on $C^{0}(X)$, and the topology of uniform convergence on any compact of $X \setminus \Delta$ on $C^{0}(X \setminus \Delta)$. Note that these two topologies are metrizable. 
As the closed set $\Delta$ is of $(2p-3)$-Haussdorf measure $0$, it is in particular of empty interior. Furthermore, as $X$ is compact and \emph{K\"{a}hler}, a theorem of Siu (\cite{siu1975extension}, Theorem 1) ensures that the property $\mathcal{P}_{\Delta}$ is indeed satisfied for $X$.

Now, by Lemma \ref{lemma: aut}, there is a continuous injective restriction map
\[
r: \Aut(X) \rightarrow \Aut(X \setminus \Delta),
\]
whose image $G=r(\Aut(X))$ is compact. But it is also easily seen that $G$ is a subgroup of $\Aut(X\setminus \Delta)$, therefore acting on $X\setminus \Delta$. By a theorem of Bochner-Montgomery \cite{bochner1946locally}, $G$ has a structure of a compact real Lie group. But $G$ is at most countable (because $\Aut(X)$ is, see Lemma \ref{lem:discreteness_measure}), so that it must be of dimension zero, and as it is compact, it must be finite. Accordingly, we deduce that $\Aut(X)$ is finite, which finishes the proof.
\end{proof}

As closed analytic subset of codimension $\geq 2$ in a compact complex variety $X$ of dimension $p$ has zero $(2p-3)$-Haussdorf measure, and as projective varieties are K\"{a}hler, we immediately deduce the following corollary
\begin{corollary}
Let $X$ be a complex projective manifold of dimension $\dim(X)=p$, which is $p$-analytically hyperbolic, and $(p-1)$-analytically hyperbolic modulo $\Delta$, where $\Delta=\Delta_{X, \ell}$ is an analytic subset of $X$ of codimension $\geq 2$. Then $\Aut(X)$ is finite.
\end{corollary}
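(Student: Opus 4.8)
The plan is to deduce this corollary directly from Theorem \ref{thm: finiteness, pseudo} by verifying that each of its hypotheses is automatically satisfied under the stated assumptions. Since that theorem already delivers the finiteness of $\Aut(X)$ for a compact K\"{a}hler manifold which is $p$-analytically hyperbolic and $(p-1)$-analytically hyperbolic modulo $\Delta=\Delta_{X,\ell}$, subject to the two technical conditions on $\Delta$, the entire task reduces to checking those conditions in the present setting.

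First I would observe that a complex projective manifold is automatically K\"{a}hler: restricting the Fubini--Study metric on the ambient projective space to $X$ produces a K\"{a}hler metric. Thus the K\"{a}hler hypothesis of Theorem \ref{thm: finiteness, pseudo} holds. The two hyperbolicity assumptions, namely $p$-analytic hyperbolicity and $(p-1)$-analytic hyperbolicity modulo $\Delta$, are precisely the standing hypotheses of the corollary, so there is nothing further to verify for those. The projective neighborhood requirement is also immediate: since $X$ is itself projective, $X$ serves as a projective neighborhood of $\Delta$ in the sense required.

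The only genuine point to check is the Hausdorff measure condition. Here I would argue by a dimension count. Because $\Delta$ is an analytic subset of $X$ of complex codimension $\geq 2$, its complex dimension is at most $p-2$, and hence its real (topological, equivalently Hausdorff) dimension is at most $2(p-2)=2p-4$. Since the $s$-dimensional Hausdorff measure of any set whose Hausdorff dimension is strictly smaller than $s$ vanishes, and since $2p-3 > 2p-4$, it follows that the $(2p-3)$-Hausdorff measure of $\Delta$ is zero. This is exactly the remaining hypothesis of Theorem \ref{thm: finiteness, pseudo}.

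With all hypotheses verified, the finiteness of $\Aut(X)$ follows by applying Theorem \ref{thm: finiteness, pseudo}. I do not expect any real obstacle in this argument: it is a purely formal deduction, and the dimension estimate for the Hausdorff measure of an analytic set of codimension $\geq 2$ is routine. The substance of the result lies entirely in Theorem \ref{thm: finiteness, pseudo}, whose proof in turn rests on Lemma \ref{lemma: aut} and Siu's extension theorem; the corollary merely records the clean, purely algebraic special case in which the exceptional locus is an analytic subvariety of codimension at least two.
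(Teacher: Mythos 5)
Your proposal is correct and follows exactly the paper's own (one-sentence) justification: the author likewise deduces the corollary from Theorem \ref{thm: finiteness, pseudo} by noting that projective manifolds are K\"ahler, that $X$ itself is a projective neighborhood of $\Delta$, and that an analytic subset of codimension $\geq 2$ has real dimension at most $2p-4$ and hence vanishing $(2p-3)$-Hausdorff measure. Your write-up simply makes these routine verifications explicit.
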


\subsection{Conditions for intermediate (analytic) hyperbolicity}

\subsubsection{Algebraic condition}
Starting from the classic fact that a projective manifold $X$, with $\Omega_{X}$ ample, is hyperbolic (and hence has $\Aut(X)$ finite),  Demailly proved in \cite{Demailly} the following generalization:
\begin{theorem}[Demailly, \cite{Demailly}]
Let $X$ be a projective manifold of dimension $p$. Let $1\leq \ell \leq p$, and suppose that
\[
\bigwedge^{\ell} \Omega_{X}
\]
is ample. Then $X$ is $\ell$-analytically hyperbolic.
\end{theorem}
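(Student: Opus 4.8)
The plan is to produce, directly, a smooth Finsler metric $\omega$ on the pure multi-vectors of $\bigwedge^{\ell}TX$, positive everywhere, with $e_{\ell}\geq\omega$ on all of $X$; this is exactly $\ell$-analytic hyperbolicity with empty exceptional locus. The input is a negatively curved metric manufactured from the ample bundle $\bigwedge^{\ell}\Omega_{X}$. By definition of ampleness, the tautological line bundle $\mathcal{O}(1)$ on $\mathbb{P}(\bigwedge^{\ell}\Omega_{X})$ carries a smooth Hermitian metric of positive curvature; dualizing, this equips the decomposable vectors of $\bigwedge^{\ell}TX=(\bigwedge^{\ell}\Omega_{X})^{\ast}$ with a smooth Finsler metric $h$ whose curvature, computed along decomposable directions, is negative. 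Since $X$ is compact, this negativity is uniform: there is a constant $c>0$ such that the relevant holomorphic sectional curvature of $h$ is $\leq -c$ at every point.

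The heart of the argument is a generalized Ahlfors--Schwarz estimate applied to an arbitrary holomorphic map $f:\mathbb{B}^{\ell}\to X$. Consider the nonnegative function
\[
\Phi(z)=\big|f_{\ast}\big(\tfrac{\partial}{\partial z_{1}}\wedge\dotsc\wedge\tfrac{\partial}{\partial z_{\ell}}\big)\big|^{2}_{h}
\]
on $\mathbb{B}^{\ell}$, where the argument is a holomorphic section of $f^{\ast}\bigwedge^{\ell}TX$ taking values in the decomposable cone, so that $|\cdot|_{h}$ makes sense. On the open set where $\Phi>0$, the standard curvature computation for a section of a (Finsler) bundle produces a lower bound of the shape
\[
i\,\partial\overline{\partial}\log\Phi \;\geq\; -f^{\ast}\Theta_{h} \;\geq\; c'\,\Phi^{1/\ell}\,\beta,
\]
with $\beta$ the Euclidean K\"ahler form and $c'>0$ coming from the uniform curvature bound. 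Comparing this with the Poincar\'e (equivalently, Kobayashi--Eisenman) metric of the ball $\mathbb{B}^{\ell}$, the Ahlfors--Schwarz lemma (cf. \cite{Kobayashi} for $\ell=\dim X$ and \cite{Demailly}, \cite{graham1985} in general) yields a bound $\Phi(0)\leq C$ by a constant $C$ depending only on $c$ and $\ell$, hence independent of $f$. The zeros of $\Phi$, where $\log\Phi=-\infty$, are harmless: one runs the maximum principle on $\{\Phi>0\}$ in the usual way.

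Finally I translate the bound into the desired inequality. If $f(0)=x$ and $f_{\ast}(\tfrac{\partial}{\partial z_{1}}\wedge\dotsc\wedge\tfrac{\partial}{\partial z_{\ell}})=R\,\xi$ with $\xi$ a pure multi-vector normalized by $|\xi|_{h}=1$, then $\Phi(0)=R^{2}\leq C$, so $R\leq\sqrt{C}$; taking the infimum over all such $f$ gives $e_{\ell}(x,\xi)\geq C^{-1/2}$. By the degree-one homogeneity of both $e_{\ell}$ and $|\cdot|_{h}$ this extends to $e_{\ell}(x,\xi)\geq C^{-1/2}\,|\xi|_{h}$ for every pure $\xi$ and every $x\in X$. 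Thus $\omega:=C^{-1/2}\,|\cdot|_{h}$ is a smooth Finsler metric, positive on all pure multi-vectors because $h$ is a genuine metric, and satisfies $e_{\ell}\geq\omega$ on $X$; equivalently, using that $\omega$ and $F_{\ell}$ are comparable on the compact $X$, no sequence as in Lemma \ref{lemma: basic lemma1} can exist. Hence $X$ is $\ell$-analytically hyperbolic.

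I expect the main obstacle to be the curvature step in the second paragraph: making precise the negativity of the Finsler metric $h$ on the decomposable cone extracted from the positivity of $\mathcal{O}(1)$, and turning it into a differential inequality for $\log\Phi$ in a form to which Ahlfors--Schwarz applies. This is where the passage from the honest line bundle $\mathcal{O}(1)$ on the projectivization to the Finsler metric on $\bigwedge^{\ell}TX$ must be handled carefully, for instance by working on $\mathbb{P}(\bigwedge^{\ell}\Omega_{X})$ throughout and only projecting down at the very end.
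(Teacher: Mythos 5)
The paper itself offers no proof of this statement (it is quoted verbatim from \cite{Demailly}), so your proposal can only be measured against the standard argument, whose overall shape you do follow: extract from ampleness of $\bigwedge^{\ell}\Omega_{X}$ a negatively curved Finsler metric $h$ on decomposable vectors of $\bigwedge^{\ell}TX$ (equivalently, a positively curved metric on $\mathcal{O}(1)$ over the projectivization), run an Ahlfors--Schwarz estimate on $\Phi=|f_{*}(\partial_{z_{1}}\wedge\dotsc\wedge\partial_{z_{\ell}})|_{h}^{2}$ for $f:\mathbb{B}^{\ell}\to X$, and convert the uniform bound $\Phi(0)\leq C$ into $e_{\ell}\geq C^{-1/2}|\cdot|_{h}$; your first and last paragraphs are fine. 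The genuine gap is the displayed inequality $i\partial\overline{\partial}\log\Phi\geq -f^{*}\Theta_{h}\geq c'\Phi^{1/\ell}\beta$: its second half is \emph{false} whenever $\ell\geq 2$. What curvature positivity actually gives is $i\partial\overline{\partial}\log\Phi=\tilde f^{*}\Theta_{\mathcal{O}(1),h^{*}}\geq\epsilon\,\tilde f^{*}\omega_{P}\geq\epsilon\,f^{*}\omega$, where $\tilde f=[s]$ is the lift to the projectivized bundle, $\omega_{P}$ a K\"ahler form there and $\omega$ one on $X$; but $f^{*}\omega$ controls $\Phi$ only through its \emph{determinant} ($\det f^{*}\omega$ is comparable to $\Phi$), and a positive hermitian form can have determinant $1$ and arbitrarily small least eigenvalue. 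Concretely, take $f(z_{1},\dotsc,z_{\ell}):=g(\delta z_{1},z_{2},\dotsc,z_{\ell})$ with $g$ a fixed immersion and $\delta$ small: then $\Phi$ is of order $\delta^{2}$, so the right-hand side $c'\Phi^{1/\ell}\beta$ has all eigenvalues of order $\delta^{2/\ell}$, whereas $f^{*}\omega$, $\tilde f^{*}\omega_{P}$, and hence also $-f^{*}\Theta_{h}$ (which is bounded above by a multiple of $\tilde f^{*}\omega_{P}$, both being smooth forms on a compact manifold) have an eigenvalue of order $\delta^{2}\ll\delta^{2/\ell}$. This is exactly why the pointwise argument works for $\ell=1$ --- there $f^{*}\omega=|f'|_{\omega}^{2}\beta$ is itself scalar --- and breaks for $\ell\geq2$: the AM--GM comparison goes the wrong way. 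So the step you yourself flagged as the main obstacle is not merely technical; the scalar differential inequality you want is unavailable, and the scalar Ahlfors--Schwarz lemma cannot be invoked.

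The repair, which is how the argument is actually run in \cite{Demailly} (and in \cite{Kobayashi} for $\ell=\dim X$, \cite{graham1985} in general), is to use the volume-form (Ricci) version of the Ahlfors--Schwarz lemma: if $\gamma$ is an almost everywhere positive hermitian form on $\mathbb{B}^{\ell}$ with $-\mathrm{Ricci}(\gamma):=i\partial\overline{\partial}\log\det\gamma\geq A\gamma$ in the sense of currents, then $\det\gamma(0)\leq((\ell+1)/A)^{\ell}$. Apply this not to $f^{*}\omega$ itself but to the conformal rescaling $\gamma:=\big(\Phi/\det f^{*}\omega\big)^{1/\ell}f^{*}\omega$. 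Then $\det\gamma=\Phi$, so $-\mathrm{Ricci}(\gamma)=i\partial\overline{\partial}\log\Phi\geq\epsilon f^{*}\omega=\epsilon\big(\det f^{*}\omega/\Phi\big)^{1/\ell}\gamma\geq A\gamma$ with $A=\epsilon c_{0}^{1/\ell}>0$: the ratio $\det f^{*}\omega/\Phi$ is the pullback under $\tilde f$ of a continuous positive function on the compact projectivized cone of decomposable vectors, hence is bounded below by some $c_{0}>0$ independently of $f$. The lemma then yields $\Phi(0)\leq C$ uniformly in $f$, and your final paragraph (normalization $\Phi(0)=R^{2}$, homogeneity, comparability of $|\cdot|_{h}$ with $F_{\ell}$, and Lemma \ref{lemma: basic lemma1}) goes through verbatim; zeros of $\Phi$ and the degeneracy locus of $f$ are handled in the sense of currents, as you indicate.
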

In \cite{Etesse2019Ampleness}, the author studied complete intersections in projective spaces satisfying such a positivity property, and proved that a general complete intersection $X \subset \P^{N}$ of $c \geq \frac{N}{\ell +1}$ hypersurfaces of large enough degrees (with an explicit very large bound) has its $\ell$th exterior power of its cotangent bundle $\bigwedge^{\ell} \Omega_{X}$ ample.
\subsubsection{Curvature condition}
It is well known that negative curvature properties imply hyperbolic properties, which therefore should imply finiteness results. For example, a compact complex manifold endowed with a \emph{smooth hermitian metric} of negative holomorphic sectional curvature is known to be hyperbolic (see \cite{Kobayashi}, and therefore has finite automorphisms group. A natural question is to extend this result to the setting of a \emph{singular metric}. Regarding this question, a recent work of Guenancia \cite{Guenancia} generalizing results of Wu-Yau \cite{WY} provides the following corollary.

\begin{theorem}[Guenancia \cite{Guenancia}]\label{sing-finite}
Let $X$ be complex projective manifold endowed with a smooth closed semipositive $(1,1)$-form $\omega$ such that there exists a Zariski open subset where $\omega$ defines a K\"ahler metric which has uniformly negative holomorphic sectional curvature . Then $X$ is of general type (and therefore has finite automorphisms group).
\end{theorem}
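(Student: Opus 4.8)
The statement that $X$ is of general type means precisely that $K_X$ is big, and once this is established the finiteness of $\Aut(X)$ is exactly the classical fact recalled at the start of this section (a projective manifold of general type has finite automorphism group). So the plan is to prove bigness of $K_X$ by producing an approximate negatively curved K\"{a}hler--Einstein metric, following the continuity/Schwarz-lemma strategy of Wu--Yau, adapted to the degenerate setting in which $\omega$ is only semipositive and the curvature hypothesis holds on a Zariski open set $U=X\setminus Z$.

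\emph{Setup and regularization.} Since $X$ is projective, fix an auxiliary K\"{a}hler form $\omega_{0}$ and, for $\epsilon>0$, set $\omega_{\epsilon}=\omega+\epsilon\,\omega_{0}$, a genuine K\"{a}hler metric degenerating to $\omega$ as $\epsilon\to 0$. I would then run the Aubin--Yau continuity method along the path, for $t\in(0,1]$, of solutions $\omega_{t,\epsilon}$ of the complex Monge--Amp\`ere equation
\[
\mathrm{Ric}(\omega_{t,\epsilon})=-\omega_{t,\epsilon}+t\,\omega_{\epsilon},
\]
whose cohomology class is $2\pi c_{1}(K_X)+t[\omega_{\epsilon}]$. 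Existence near the top of the path, together with openness, is standard; the entire difficulty is \emph{closedness}, i.e.\ uniform a priori estimates as $t\to 0^{+}$ and $\epsilon\to 0^{+}$.

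\emph{The crux: Schwarz lemma with Royden's refinement.} The key estimate is a uniform upper bound for the trace $\mathrm{tr}_{\omega_{t,\epsilon}}\omega$ on compact subsets of $U$. One applies the maximum principle to $\log\,\mathrm{tr}_{\omega_{t,\epsilon}}\omega$, corrected by a multiple of a potential to control the region where $\omega$ degenerates. Computing the $\omega_{t,\epsilon}$-Laplacian of this quantity produces a curvature term that involves only the curvature tensor of $\omega$; here Royden's lemma converts the hypothesis ``holomorphic sectional curvature $\leq-\kappa<0$'' into the precise negative quadratic inequality needed to dominate the remaining terms, the Ricci lower bound for $\omega_{t,\epsilon}$ being supplied by the equation itself. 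This is exactly where uniform negativity (rather than mere non-positivity) is indispensable. The outcome is a bound $\mathrm{tr}_{\omega_{t,\epsilon}}\omega\leq C$, uniform in $t$ and $\epsilon$, on compacts of $U$; equivalently, since the eigenvalues of $\omega$ relative to $\omega_{t,\epsilon}$ are nonnegative and sum to at most $C$, one gets $\omega_{t,\epsilon}\geq C^{-1}\omega$ there.

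\emph{Passing to the limit and bigness.} Letting $\epsilon\to 0$ and then $t\to 0$, the uniform estimates produce a limiting positive current in the class $2\pi c_{1}(K_X)$, so $K_X$ is nef. To upgrade nefness to bigness, use the volume comparison: from $\omega_{t,\epsilon}\geq C^{-1}\omega$ on $U$ one gets $\int_{X}\omega_{t,\epsilon}^{\,n}\geq C^{-n}\int_{U}\omega^{\,n}>0$ uniformly, whence in the limit $\int_{X}c_{1}(K_X)^{n}>0$. Thus $K_X$ is nef and big, so $X$ is of general type, and by the classical fact cited above $\Aut(X)$ is finite. The main obstacle is precisely the a priori estimate in the degenerate regime: because $\omega$ is only semipositive and collapses along $Z$, the naive maximum-principle argument breaks down near $Z$, and one must introduce correction terms and exploit the \emph{uniformity} of the negative curvature on $U$ to absorb the error contributions coming both from the $\epsilon\,\omega_{0}$ regularization and from approaching $Z$, independently of $t$ and $\epsilon$; the same genuine (not merely non-positive) negativity on the Zariski open set is what allows nef to be improved to big.
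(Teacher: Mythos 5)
First, a point of comparison: the paper does not prove this statement at all — it is quoted as a black box from Guenancia \cite{Guenancia} (which generalizes Wu--Yau \cite{WY}), and is only used as an input to deduce the corollary that follows it. So your proposal can only be measured against the argument in the literature, and at the level of strategy you have identified it correctly: regularization $\omega_{\epsilon}=\omega+\epsilon\omega_{0}$, the Wu--Yau continuity path, a Royden-type Schwarz lemma giving a trace bound uniform in $(t,\epsilon)$, and a volume estimate upgrading nef to big.

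As a proof, however, the sketch has genuine gaps exactly where the theorem is hard. (i) The path cannot start at $t=1$: solving $\mathrm{Ric}(\omega_{t,\epsilon})=-\omega_{t,\epsilon}+t\,\omega_{\epsilon}$ by Aubin--Yau requires the class $2\pi c_{1}(K_X)+t[\omega_{\epsilon}]$ to be K\"ahler, and at $t=1$ this is unknown — no positivity of $K_X$ is available a priori, since that is what is being proved. One must start at $t\gg 1$ (where openness of the K\"ahler cone makes the class K\"ahler), and the entire content of ``closedness'' is that the set of admissible $t$ reaches down to $0$; calling existence at the top of the path ``standard'' conceals this. (ii) ``A limiting positive current in $2\pi c_{1}(K_X)$, so $K_X$ is nef'' is a non sequitur: a class containing a positive current is pseudo-effective, not nef. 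Nefness — which you genuinely need, because $\int_{X}c_{1}(K_X)^{n}>0$ implies bigness only for a \emph{nef} class — comes instead from exhibiting $c_{1}(K_X)$ as a limit of the K\"ahler classes $c_{1}(K_X)+t[\omega_{\epsilon}]$, i.e.\ again from closedness of the continuity method. (iii) In the Schwarz-lemma step, the maximum of $\log\mathrm{tr}_{\omega_{t,\epsilon}}\omega$ may a priori occur on $Z=X\setminus U$, where no curvature hypothesis holds, and for the volume estimate to survive the limit the bound $\mathrm{tr}_{\omega_{t,\epsilon}}\omega\leq C$ must hold with one constant on all of $U$, not on each compact with a compact-dependent constant; your phrase ``corrected by a multiple of a potential'' gestures at Guenancia's resolution of precisely this difficulty, but it is the heart of \cite{Guenancia} and cannot be waved through. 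None of this makes the strategy wrong — it is the strategy of \cite{WY} and \cite{Guenancia} — but each of (i)--(iii) is a missing step rather than a routine verification.
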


It is natural to try to extend these kind of results to intermediate hyperbolicity. Graham and Wu have introduced in \cite{graham1985} curvature conditions which imply $\ell$-analytic hyperbolicity. Namely, Theorem 4.5 in \cite{graham1985} show that \emph{strongly negative $\ell$-th Ricci curvature} (see loc. it. for a definition) imply $\ell$-analytic hyperbolicity. 

An immediate corollary of this statement and Theorem \ref{thm: mainthm1} is the following.

\begin{corollary}
Let $X$ be complex compact hermitian manifold with strongly negative $\ell$-th Ricci curvature for some $1\leq \ell \leq \dim X$. Then $\Aut(X)$ is finite.
\end{corollary}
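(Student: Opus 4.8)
The plan is to reduce everything to Theorem \ref{thm: mainthm1} by first manufacturing $(p-1)$-analytic hyperbolicity, where $p=\dim X$. Three ingredients are available: the cited theorem of Graham--Wu (\cite{graham1985}, Theorem 4.5), which converts the curvature hypothesis into an intermediate hyperbolicity statement; the monotonicity Proposition \ref{prop:l_to_lplusone}, which lets one \emph{raise} the index of analytic hyperbolicity; and Theorem \ref{thm: mainthm1} itself, which turns $(p-1)$-analytic hyperbolicity into finiteness of $\Aut(X)$.

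Concretely, I would first invoke Graham--Wu to obtain that strongly negative $\ell$-th Ricci curvature makes $X$ be $\ell$-analytically hyperbolic in the absolute sense (degeneracy set $\Delta=\emptyset$, since the curvature bound holds everywhere on the compact manifold). Assuming $\ell\le p-1$, I would then apply Proposition \ref{prop:l_to_lplusone} repeatedly with $\Delta=\emptyset$: from $\ell$-analytic hyperbolicity one gets $(\ell+1)$-, then $(\ell+2)$-, and after finitely many steps $(p-1)$-analytic hyperbolicity. Feeding this into Theorem \ref{thm: mainthm1} yields that $\Aut(X)$ is finite. (When $p=1$ the statement is just the classical fact that a compact hyperbolic curve has finite automorphism group, so one may assume $p\ge 2$.)

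The delicate point — and the genuine obstacle — is the boundary case $\ell=p$. Here Graham--Wu delivers only $p$-analytic hyperbolicity (strong measure hyperbolicity), and since Proposition \ref{prop:l_to_lplusone} only lets the index increase, one cannot descend to the $(p-1)$-analytic hyperbolicity demanded by Theorem \ref{thm: mainthm1}; this is precisely the content of the open Conjecture \ref{conjecture: finiteness}. Thus the chain above is effective exactly for $1\le \ell\le p-1$, and the corollary is \emph{immediate} in that range. For $\ell=p$ an additional input is required: if moreover $X$ is K\"ahler, strongly negative $p$-th Ricci curvature means the Ricci form is negative definite, so $c_1(K_X)$ is represented by a K\"ahler class and $X$ is of general type, whence $\Aut(X)$ is finite by the classical finiteness theorem for automorphisms of varieties of general type. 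In the purely Hermitian case with $\ell=p$, however, the corollary goes beyond what Theorem \ref{thm: mainthm1} and Proposition \ref{prop:l_to_lplusone} provide and would rely on Conjecture \ref{conjecture: finiteness}.
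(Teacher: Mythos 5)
Your argument for the range $1\le\ell\le\dim X-1$ is exactly the paper's proof: \cite{graham1985}[Theorem 4.5] converts the curvature hypothesis into $\ell$-analytic hyperbolicity (with empty degeneracy set), Proposition \ref{prop:l_to_lplusone} applied repeatedly raises this to $(\dim X-1)$-analytic hyperbolicity, and Theorem \ref{thm: mainthm1} gives finiteness of $\Aut(X)$. That part is correct and matches the paper.

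The genuine gap is your treatment of $\ell=\dim X$: you assert that in the purely Hermitian case the corollary ``goes beyond'' the available tools and would have to rely on the open Conjecture \ref{conjecture: finiteness}. It does not. The paper settles this case in one line: if $\ell=\dim X$, then $X$ is of general type, and one concludes by the classical finiteness theorem for automorphisms of projective manifolds of general type recalled in the introduction (\cite{Kobayashi}[Chapter 7, Section 6]). The K\"ahler hypothesis you add is superfluous, because the top ($\ell=\dim X$) Ricci curvature of a Hermitian metric $g$ is the curvature of the metric induced by $\det(g)$ on the anticanonical bundle, namely the closed $(1,1)$-form $-i\partial\bar\partial\log\det(g)$. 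Strong negativity of this form says exactly that the dual metric on $K_X$ has curvature form $i\partial\bar\partial\log\det(g)>0$, i.e. $K_X$ is a positive line bundle; its curvature form is itself a K\"ahler form, so $X$ is in particular K\"ahler. Kodaira's embedding theorem --- which needs only a positive line bundle, not that $g$ itself be K\"ahler --- then makes $X$ projective with $K_X$ ample, hence of general type, and finiteness of $\Aut(X)$ follows. So the case $\ell=\dim X$ is not an instance of Conjecture \ref{conjecture: finiteness}; as written, your proof establishes the corollary only for $\ell\le\dim X-1$ (plus the K\"ahler subcase of $\ell=\dim X$), and it becomes complete once this observation replaces your final paragraph.
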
 

\begin{proof}
If $\ell=\dim X$ then $X$ is of general type and we are done.

If $\ell < \dim X$ then Theorem 4.5 of \cite{graham1985} implies that $X$ is $\ell$-analytically hyperbolic, which implies that $X$ is $(\dim X - 1)$-analytically hyperbolic by Proposition \ref{prop:l_to_lplusone}.
Then Theorem \ref{thm: mainthm1} implies that $\Aut(X)$ is finite.
\end{proof}

\begin{remark}
As noted by Graham and Wu (\cite{graham1985} p. 638), it is unknown if the existence of a Hermitian metric of strongly negative $\ell$-th Ricci curvature implies the existence of one with strongly negative $(\ell+1)$-th Ricci curvature.
\end{remark}

\begin{remark}
It would be interesting to generalize this statement to the setting of singular metric such as in Theorem \ref{sing-finite} above. It does not seem obvious how to use Theorem \ref{thm: mainthm2} in this context. More precisely, the following question seems interesting to us. Suppose that $X$ is a complex compact hermitian manifold with pseudo-strongly negative $\ell$-th Ricci curvature for some $1\leq \ell \leq \dim X$. Then is it true that $\Aut(X)$ is finite?
\end{remark}

 \section{Intermediate Picard hyperbolicity}
 \label{section: Picard}
Motivated by the extension property $(\mathcal{P}_{\Delta})$ introduced during the proof of Theorem \ref{thm: mainthm2}, we generalize the notion of Picard hyperbolicity introduced in \cite{JKuch} and \cite{deng2020big} to the context of intermediate hyperbolicity.

\begin{definition}
Let $X$ be a compact complex manifold and $\Delta$ a proper closed subset of empty interior. $X$ is said \emph{$\ell$-Picard hyperbolic} modulo $\Delta$ if for every $p,q \in \NN$, $p+q=\ell$, every non-degenerate holomorphic map $f: \BB^p \times (\BB^*)^q \to X$ whose image is not contained in $\Delta$ extends meromorphically to a map $f:\BB^{\ell} \dashrightarrow X$.
\end{definition}

\begin{remark}
\label{remark: bimero}
Note that if $X$ is $\ell$-Picard hyperbolic, then so is any compact complex manifold $X'$ bimeromorphic to $X$, as we are interested in \emph{meromorphic} extensions.
\end{remark}

If $X$ is moreover supposed to be \emph{K\"ahler}, then in order to obtain the pseudo $\ell$-Picard hyperbolicity, it is enough to check the conditions only for $q=1$, i.e. for non-degenerate holomorphic maps $f: \BB^{\ell-1} \times \BB^* \to X$ whose image is not contained in the exceptional locus:
\begin{lemma}
\label{lemma: Picard}
Let $X$ is a compact K\"ahler manifold, and let $1\leq \ell \leq \dim(X)$. Then $X$ is $\ell$-Picard hyperbolic modulo $\Delta$ if and only if  any non-degenerate holomorphic map $f: \BB^{\ell-1} \times \BB^{*} \to X$  whose image is not included in $\Delta$ extends meromorphically to $\BB^{\ell}$.
\end{lemma}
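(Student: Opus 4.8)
The forward implication is immediate: $\ell$-Picard hyperbolicity modulo $\Delta$ demands the meromorphic extension for \emph{every} decomposition $p+q=\ell$, so in particular for $p=\ell-1,\ q=1$. The plan is therefore to prove the converse, i.e.\ to bootstrap from the single-puncture case to an arbitrary non-degenerate $f\colon \BB^p\times(\BB^*)^q\to X$ with $p+q=\ell$, $q\geq 1$, and image not contained in $\Delta$. Rather than inducting on $q$ (which would force me to feed \emph{meromorphic} maps back into a hypothesis about \emph{holomorphic} ones), I would extend $f$ across the entire missing locus $E:=\BB^\ell\setminus(\BB^p\times(\BB^*)^q)=\bigcup_{j=1}^{q}\{w_j=0\}$ at once, where $w_1,\dots,w_q$ denote the coordinates on the punctured factors. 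The strategy has two stages: fill the smooth codimension-one part of $E$ using the hypothesis, then fill the codimension-$\geq 2$ crossing locus using Siu's theorem.

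First I would work near a generic point $x_0$ of $E$, say $x_0\in\{w_q=0\}$ with $w_j(x_0)\neq 0$ for $j<q$. A small polydisc chart about $x_0$ meets the domain in a set biholomorphic to $\BB^{\ell-1}\times\BB^*$, where $\BB^{\ell-1}$ records the directions $(z_1,\dots,z_p,w_1,\dots,w_{q-1})$ (all genuine discs near $x_0$) and $\BB^*$ records $w_q$ punctured at $0$. The restriction of $f$ to this chart is still non-degenerate: the degeneracy locus $Z\subset\BB^p\times(\BB^*)^q$, cut out by the $\ell\times\ell$ minors of the Jacobian, is a proper analytic subset, hence has empty interior and cannot contain the chart. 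Granting that the chart also has image not contained in $\Delta$, the single-puncture hypothesis applies and yields a meromorphic extension of $f$ across $\{w_q=0\}$ near $x_0$.

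These local extensions all agree with $f$ on the dense domain, hence agree on overlaps by uniqueness of meromorphic maps, and so glue to a meromorphic map on an open connected set $W:=(\BB^p\times(\BB^*)^q)\cup N$, with $N$ a neighborhood of the ``good'' part of $E$. The set $\BB^\ell\setminus W$ is closed and contained in the crossing locus $E^{\mathrm{sing}}=\bigcup_{j\neq k}\{w_j=w_k=0\}$ together with the locus on each hyperplane where the hypothesis could not be invoked. When $\Delta$ is analytic this latter locus is empty, since then $f^{-1}(\Delta)$ is a proper analytic subset with empty interior, so no chart can have its whole image in $\Delta$; in general one must check that it is contained in a set of codimension $\geq 2$. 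In either case $\BB^\ell\setminus W$ lies in an analytic set of codimension $\geq 2$, hence has Hausdorff dimension $\leq 2\ell-4\leq 2\dim(X)-4$. Since $X$ is compact K\"ahler, Siu's extension theorem \cite{siu1975extension} then extends the map across $\BB^\ell\setminus W$, producing the desired $f\colon\BB^\ell\dashrightarrow X$.

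The hard part, and the only place the K\"ahler hypothesis enters, is this last globalization step: the single-puncture hypothesis only gives extension across a generic (codimension-one) point of each $\{w_j=0\}$, and reaching the mutual crossings — the genuinely higher-codimension stratum of $E$ — requires Siu's theorem, which fails without the K\"ahler assumption. The accompanying bookkeeping, namely ensuring that the non-extendable locus (controlling where $f$ degenerates or sends an entire chart into $\Delta$) really has codimension $\geq 2$, is the technical core; by contrast, the observation that the local model near a generic hyperplane point is exactly $\BB^{\ell-1}\times\BB^*$ is routine once the chart is set up.
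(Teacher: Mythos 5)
Your proposal is correct and takes essentially the same approach as the paper's proof: both reduce to the local model around boundary points, invoke the single-puncture hypothesis at the smooth (depth-one) stratum of the missing divisor, and use Siu's extension theorem for compact K\"ahler targets to cross the remaining codimension-$\geq 2$ locus, the only difference being that the paper organizes this as an induction on the stratum depth $j$ (applying Siu at each stage), while you extend across all smooth boundary points first and apply Siu once. The subtlety you flag --- a chart near the boundary being mapped entirely into $\Delta$ when $\Delta$ is merely closed rather than analytic --- is likewise left untreated in the paper's own proof, so you are, if anything, more careful on that point.
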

\begin{proof}
This is an application of the extension theorem of Siu \cite{siu1975extension}[Theorem1], which implies in particular that any meromorphic map (without any further condition) from $\BB^{\ell} \setminus E$ to $X$ extends meromorphically as soon as $E$ is a closed subset of codimension $\geq 2$. Indeed, let $q\geq1$, and let $f: \BB^{p}\times (\BB^{*})^{q} \rightarrow X$, $p+q=\ell$, be a non-degenerate holomorphic map whose image is not included in $\Delta$. Extending $f$ meromorphically is a local question, and observe that around a point $z_{0} \in \BB^{p} \times (\BB^{*})^{q-j}\times \{0\}^{j}$, $0 \leq j \leq q$, up to a renormalization, $f$ can be interpreted as a holomorphic map from $\BB^{\ell-j} \times (\BB^{*})^{j}$ to $X$. If $j=1$, we can extend $f$ meromorphically by hypothesis. If $j>1$, an obvious induction shows that we can extend meromorphically $f$ to $\BB^{\ell-j} \times \BB^{j} \setminus {(0, \dotsc, 0)}$. Now, as $j>1$, we can apply Siu's extension theorem, so that $f$ actually extends meromorphically to $\BB^{\ell}$, and we are done.
\end{proof}

   One also sees from \cite{siu1975extension} that if $X$ is $1$-Picard hyperbolic then $X$ is $\ell$-Picard hyperbolic for all $1 \leq \ell \leq \dim X$ (see \cite{deng2020big}[Prop. 3.4]).  In fact, one has the more general property that if $X$ is pseudo-$\ell$-Picard hyperbolic then $X$ is pseudo-$(\ell+1)$-Picard hyperbolic as shown in the next proposition.

\begin{proposition}
\label{Picard: ell to ell plus one}
Let $X$ be a compact K\"ahler manifold. If $X$ is $\ell$-Picard hyperbolic modulo $\Delta$ then $X$ is $\ell+1$-Picard hyperbolic modulo $\Delta$.
\end{proposition}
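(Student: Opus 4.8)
The plan is to reduce the statement, via Lemma~\ref{lemma: Picard}, to a single extension problem across a hypersurface, solve it on generic slices using the $\ell$-Picard hypothesis, and then promote the slicewise extensions to a joint one by means of Siu's theorem \cite{siu1975extension}. Since $X$ is K\"ahler, Lemma~\ref{lemma: Picard} applied at level $\ell+1$ shows it suffices to prove that every non-degenerate holomorphic map $g:\BB^{\ell}\times\BB^{*}\to X$ whose image is not contained in $\Delta$ extends meromorphically across the hypersurface $H=\BB^{\ell}\times\{0\}$ to a meromorphic map $\BB^{\ell+1}\dashrightarrow X$. I write the coordinates on $\BB^{\ell}\times\BB^{*}$ as $(a,y,w)$ with $a\in\BB$, $y\in\BB^{\ell-1}$ and $w\in\BB^{*}$, so that $H=\{w=0\}$.

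The first step is to extend $g$ on the slices transverse to the first coordinate. Restricting the differential of $g$ (which has maximal rank $\ell+1$ on a dense open set) to the slice directions drops the rank by at most one, so the slice
\[
g_{a}\bydef g(a,\cdot,\cdot):\BB^{\ell-1}\times\BB^{*}\to X
\]
is non-degenerate for every $a$ outside a proper analytic subset of $\BB$; likewise, since the image of $g$ is not contained in the set $\Delta$ of empty interior, the image of $g_{a}$ fails to lie in $\Delta$ for all $a$ outside a nowhere dense closed subset. Hence there is a proper closed $\Sigma\subsetneq\BB$ such that, for $a\in\BB\setminus\Sigma$, the map $g_{a}$ is a non-degenerate holomorphic map from $\BB^{\ell-1}\times\BB^{*}$ with image not contained in $\Delta$; by the $\ell$-Picard hyperbolicity of $X$ modulo $\Delta$ and Lemma~\ref{lemma: Picard}, each such $g_{a}$ extends to a meromorphic map $\widetilde{g}_{a}:\BB^{\ell-1}\times\BB\dashrightarrow X$ across $w=0$.

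The core of the argument is to glue these slicewise extensions into a genuine meromorphic extension of $g$ across $H$, and this is where compactness and the K\"ahler hypothesis are indispensable. I would consider the closure $\overline{G}$ of the graph $G_{g}\subset(\BB^{\ell}\times\BB^{*})\times X$ inside $(\BB^{\ell}\times\BB)\times X$. Over $\BB^{\ell}\times\BB^{*}$ it is the honest $(\ell+1)$-dimensional graph of $g$, and over each good slice $\{a\}\times\BB^{\ell-1}\times\BB$ it contains the graph of $\widetilde{g}_{a}$, an analytic set of locally finite $2\ell$-volume. Fixing a K\"ahler metric on $X$ and a product metric on $(\BB^{\ell}\times\BB)\times X$, the slice extensions provide control, uniform over $a$, of the volume of the slice graphs near $\{w=0\}$; the key use of the K\"ahler form is to propagate this into a local finiteness of the $(2\ell+2)$-volume of $G_{g}$ in a neighborhood of $H\times X$. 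Once such a volume bound is in place, $\overline{G}$ is a closed set, analytic off the hypersurface $H\times X$ and of locally finite volume there, so by the analyticity criterion underlying Siu's extension theorem \cite{siu1975extension} it is a pure $(\ell+1)$-dimensional analytic subset of $(\BB^{\ell}\times\BB)\times X$, the exceptional slices in $\Sigma$ and any residual indeterminacy locus of codimension $\geq 2$ being absorbed by the same theorem (as in the proof of Lemma~\ref{lemma: Picard}).

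Finally, since $X$ is compact, the projection $\pi:\overline{G}\to\BB^{\ell+1}$ is proper, and it is biholomorphic over the dense open set $\BB^{\ell}\times\BB^{*}$ where $\overline{G}=G_{g}$; a pure $(\ell+1)$-dimensional analytic subset of $\BB^{\ell+1}\times X$ with proper, generically one-to-one projection to $\BB^{\ell+1}$ is precisely the graph of a meromorphic map $\BB^{\ell+1}\dashrightarrow X$ in the sense recalled in Section~\ref{subs: pseudo}, and this map extends $g$. Together with Lemma~\ref{lemma: Picard} this yields that $X$ is $(\ell+1)$-Picard hyperbolic modulo $\Delta$. I expect the promotion step to be the main obstacle: turning the slicewise finiteness of volume into a genuine volume bound for $\overline{G}$ across the codimension-one locus $\{w=0\}$, uniformly including the slices mapping into $\Delta$, is exactly the point that forces the target to be K\"ahler and is the heart of Siu's extension theory.
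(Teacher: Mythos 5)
Your first half coincides with the paper's proof: reduce, via Lemma \ref{lemma: Picard}, to extending a non-degenerate $g:\BB^{\ell}\times\BB^{*}\to X$ with image not in $\Delta$ across $\BB^{\ell}\times\{0\}$, and observe that for all parameters $a$ in a nonempty open subset of $\BB$ the slice $g_{a}:\BB^{\ell-1}\times\BB^{*}\to X$ is non-degenerate with image not contained in $\Delta$, hence extends meromorphically by $\ell$-Picard hyperbolicity. (One small imprecision there: the set of $a$ with $g_{a}(\BB^{\ell-1}\times\BB^{*})\subset\Delta$ is closed but need not be nowhere dense when $\ell+1<\dim X$, since $\Delta$ of empty interior may still contain analytic sets of dimension $\geq\ell+1$; what is true, and suffices, is that the set of good parameters contains a nonempty open set, obtained by fixing a point $(y_{0},w_{0})$ with $g(a_{0},y_{0},w_{0})\notin\Delta$ and letting $a$ vary near $a_{0}$, then intersecting with the dense open set $\pi(U)$.)

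The genuine gap is your promotion step. You assert that the slicewise extensions $\widetilde{g}_{a}$ "provide control, uniform over $a$, of the volume of the slice graphs near $\{w=0\}$", and that the K\"ahler form "propagates" this to local finiteness of the $(2\ell+2)$-volume of the closure of the graph of $g$. Neither claim is proved, and neither is routine: meromorphic extendability of each $\widetilde{g}_{a}$ gives locally finite mass of each individual slice graph but no uniformity in $a$ whatsoever, and even granting integrability in $a$ of the slice masses (which controls the terms of the volume expansion with at least one base factor), the top term $\pi_{2}^{*}\omega_{X}^{\ell+1}$ integrated over the graph is not controlled by any Fubini argument over slices; taming it via closedness of $\omega_{X}$ and Stokes-type monotonicity is exactly the content of Siu's theorem. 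So as written you have reduced the proposition to re-proving Siu's extension theorem and left its key estimate unproved --- as you yourself concede in your final sentence. The paper sidesteps this entirely: after the slicing step it notes that, a fortiori, the one-variable restrictions $f(w,\cdot):\BB^{*}\to X$ extend meromorphically for every $w$ in the nonempty open set $p(\pi^{-1}(O))\subset\BB^{\ell}$, and then quotes Siu's result (*) of \cite{siu1975extension} (p.~442) as a black box: a map from $\BB^{\ell}\times\BB^{*}$ into a compact K\"ahler manifold whose restrictions to the punctured disks $\{w\}\times\BB^{*}$ extend meromorphically for a set of $w$ of positive measure extends meromorphically across $\BB^{\ell}\times\{0\}$. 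Replacing your volume-bound sketch by this citation (your open set of good parameters certainly has positive measure) closes the gap; trying instead to establish the uniform mass bound from scratch is not a step in a proof but a rewriting of Siu's paper.
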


\begin{proof}
By Lemma \ref{lemma: Picard}, in order to prove the $(\ell+1)$-Picard hyperbolicity modulo $\Delta$, it is enough to show that any non-degenerate holomorphic map
\[
f: \BB^{\ell} \times \BB^{*} \rightarrow X
\]
whose image is not included in $\Delta$ extends to a meromorphic map. Since $f$ is non-degenerate, there exists a dense Zariski open set $U$ on which the differential $df$ is of maximal rank. Denoting $\pi: \BB \times (\BB^{\ell-1} \times \BB^{*}) \rightarrow \BB$  the projection on the first factor, it is easily seen that for any $z$ in the open set $\pi(U)$, the restriction map  $f(z,.): \BB^{\ell-1}\times \BB^{*} \rightarrow X$ remains non-degenerate. Furthermore, one can find $O \subset \pi(U)$ open set such that $f(z,.)$ is not included in $\Delta$ for every $z \in O$. In particular, since $X$ is $\ell$-Picard hyperbolic modulo $\Delta$, this map extends to a meromorphic map, which \`a fortiori implies that for any $w$ in the open set $p(\pi^{-1}(O))$, where $p$ is the projection $p: \BB^{\ell}\times \BB^{*} \rightarrow \BB^{\ell}$, the map $f(w,.): \BB^{*} \rightarrow X$ extends to a meromorphic map.

Now, by invoking the extension result of Siu \cite{siu1975extension}[result (*), p.442], this actually implies that the holomorphic map $f: \BB^{\ell}\times \BB^{*} \rightarrow X$ extends to a meromorphic map, which indeed shows that $X$ is $(\ell+1)$-Picard hyperbolic modulo $\Delta$.
\end{proof}

Examples of Picard hyperbolic varieties are provided by projective manifolds whose cotangent bundles satisfy some positivity properties as shown by the following result of Noguchi \cite{Noguchi77} which generalizes the classical fact that varieties of general type are $\dim X$-Picard hyperbolic.

\begin{theorem}[Noguchi]
Let $X$ be a smooth projective variety over $\CC$ and let $\Delta\subset X$ be a proper Zariski-closed subset.
If  $\bigwedge^p \Omega^1_X$ is  ample modulo $\Delta$, then $X$ is $p$-Picard hyperbolic modulo $\Delta$.
\end{theorem}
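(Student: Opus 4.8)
The plan is to reduce the statement to a one–variable extension problem and then run a big-Picard-type argument powered by the negativity coming from the ampleness hypothesis. Since $X$ is projective it is in particular K\"ahler, so by Lemma \ref{lemma: Picard} (applied with $\ell=p$) it suffices to treat the case $q=1$: given a non-degenerate holomorphic map
\[
f: \BB^{p-1}\times \BB^{*}\rightarrow X
\]
whose image is not contained in $\Delta$, I must show that $f$ extends meromorphically across the divisor $\{w=0\}$, where $w$ denotes the coordinate on the last (punctured) factor. Set $\Lambda f \bydef f_{*}\big(\tfrac{\partial}{\partial z_{1}}\wedge \dotsc \wedge \tfrac{\partial}{\partial z_{p-1}}\wedge \tfrac{\partial}{\partial w}\big)$, a holomorphic section of $f^{*}\bigwedge^{p}TX$ with values in the cone of pure (decomposable) multi-vectors; it is not identically zero precisely because $f$ is non-degenerate.

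Next I would convert the ampleness of $\bigwedge^{p}\Omega^{1}_{X}$ modulo $\Delta$ into quantitative negativity. Following the construction of Demailly underlying the theorem quoted above, the ampleness modulo $\Delta$ produces a continuous Finsler metric $\omega$ on the pure multi-vectors of $\bigwedge^{p}TX$, which is a genuine metric on $X\setminus \Delta$, is dominated by $e_{p}$, and — crucially for extension — is negatively curved along decomposable directions on $X\setminus\Delta$ in the sense needed for a Schwarz lemma. The heart of the argument is then a generalized Ahlfors–Schwarz estimate: measuring $\Lambda f$ in $\omega$ and comparing with the product of Poincar\'e-type metrics on $\BB^{p-1}\times \BB^{*}$ yields an a priori bound
\[
\big|\Lambda f\big|_{\omega}\ \lesssim\ \prod_{i=1}^{p-1}\frac{1}{(1-|z_{i}|^{2})^{2}}\cdot \frac{1}{|w|^{2}(\log|w|)^{2}}
\]
wherever the image avoids $\Delta$. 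The decisive factor is $1/\big(|w|^{2}(\log|w|)^{2}\big)$, the Poincar\'e metric of $\BB^{*}$ near the puncture, which forces the relevant ``rotation'' of $f$ to decay as $w\to 0$.

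With this growth control in hand I would run the classical big-Picard mechanism (as in \cite{Kobayashi}[Chapter 6] for $p=1$, upgraded to the present setting). Pulling back the sections of a large power of $\bigwedge^{p}\Omega^{1}_{X}$ that realize the ampleness, the estimate above becomes a bounded-growth (finite order) statement for $f$ near $\{w=0\}$; a removable-singularity argument for maps of finite order into a projective variety then produces a meromorphic extension across the divisor. The hypothesis that the image of $f$ is not contained in $\Delta$ guarantees simultaneously that $\Lambda f\not\equiv 0$ and that $f$ stays generically off the locus where $\omega$ degenerates, so that the Schwarz estimate is not vacuous; the degeneracy set $\Delta$ is exactly the locus one is allowed to absorb into the indeterminacy of the meromorphic extension.

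I expect the main obstacle to be the Ahlfors–Schwarz lemma in the intermediate regime $p<\dim X$: there $\omega$ is only a Finsler metric on \emph{decomposable} $p$-vectors, so the curvature computation and the comparison with the Poincar\'e metric must be performed on the cone of pure multi-vectors (equivalently, on the associated Grassmann bundle), and one must check that the negativity supplied by Demailly's construction genuinely persists along these directions. The second delicate point is turning the infinitesimal estimate into an honest meromorphic extension across a codimension-one puncture — the situation not covered for free by Siu's theorem, which handles only codimension $\geq 2$ — and this codimension-one step is precisely where the big-Picard (rather than Hartogs-type) content of the statement resides. A value-distribution argument via the logarithmic derivative lemma applied to the pulled-back sections of $\bigwedge^{p}\Omega^{1}_{X}$ would be the natural alternative route to this final extension.
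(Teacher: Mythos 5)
Note first that the paper offers no proof of this statement: it is quoted verbatim from Noguchi \cite{Noguchi77}, so the only thing to assess is whether your sketch stands on its own. It does not. Your reduction to the case $q=1$ via Lemma \ref{lemma: Picard} is correct (and matches how the paper uses Siu's theorem \cite{siu1975extension} elsewhere), but the two ``delicate points'' you flag at the end are not technical remainders --- they are the entire content of the theorem, and your proposal assumes them rather than proves them. What ampleness of $\bigwedge^p\Omega^1_X$ modulo $\Delta$ buys you, via Demailly's construction \cite{Demailly}, is a Finsler pseudo-metric on decomposable $p$-vectors satisfying a Schwarz-type inequality for maps from $\BB^p$, i.e.\ $p$-analytic hyperbolicity modulo $\Delta$. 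Passing from that infinitesimal negativity to a meromorphic extension across the codimension-one divisor $\{w=0\}$ is precisely the big-Picard statement to be established; there is no off-the-shelf ``removable-singularity argument for maps of finite order into a projective variety'' in the intermediate regime $1<p<\dim X$, where one controls only the $p$-fold wedge of the differential. (For $p=1$ this extension step is Kwack's theorem as in \cite{Kobayashi}; for $p=\dim X$ it is Kobayashi--Ochiai; the intermediate case is exactly Noguchi's contribution.) Invoking ``the classical big-Picard mechanism, upgraded to the present setting'' is therefore circular.

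Two concrete points where the sketch breaks down. First, the standard way to convert a growth estimate into a meromorphic extension is to bound the volume of the graph of $f$ near the divisor and apply Bishop/Remmert--Stein; that volume expands as a sum of mixed terms $\int f^*\omega_X^k\wedge\omega_{\eucl}^{\,p-k}$ for $0\le k\le p$, whereas your Ahlfors--Schwarz estimate on $|\Lambda f|_\omega$ controls only the top term $k=p$. When only $\bigwedge^p\Omega^1_X$ is positive (and not $\Omega^1_X$ itself), nothing controls the lower-order terms, and this is exactly why Noguchi's actual argument runs through value distribution theory --- Nevanlinna characteristics and logarithmic-derivative estimates applied to pulled-back sections of symmetric powers of $\bigwedge^p\Omega^1_X$ --- the route you relegate to a single closing sentence as an ``alternative.'' Second, the ``modulo $\Delta$'' hypothesis is not handled: your Schwarz estimate degenerates wherever the image approaches $\Delta$, and the assumptions (non-degeneracy, image not contained in $\Delta$) do not prevent $f$ from clustering on $\Delta$ as $w\to 0$, which is the only regime that matters for extension. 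Controlling this requires using that $\Delta$ is Zariski closed --- e.g.\ sections vanishing on $\Delta$ and a first-main-theorem type bound --- not merely that it is a closed set of empty interior. As written, the proposal is a correct identification of the classical strategy together with an accurate list of its obstructions, but none of the obstructions is overcome.
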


It is natural to try to give positivity conditions which will still ensure some intermediate Picard hyperbolicity. In this direction, one can state the following corollary of a recent result of \cite{CP19}.
\begin{theorem}[Campana, P\u{a}un]
Let $X$ be a smooth projective variety over $\CC$ such that some tensor power of $\Omega^1_X$ is big. Then $X$ is of general type, in particular $\dim X$-Picard hyperbolic.
\end{theorem}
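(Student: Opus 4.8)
The plan is to factor the statement into two implications: first, that bigness of some tensor power of $\Omega^1_X$ forces $X$ to be of general type; and second, that a projective manifold of general type is $\dim(X)$-Picard hyperbolic. The second implication is precisely the classical fact recalled just above the theorem of Noguchi, and indeed it is the case $p=\dim(X)$ of that theorem \cite{Noguchi77}: since $\bigwedge^{\dim(X)}\Omega^1_X=K_X$ and ``$K_X$ big'' is by definition general type, the hypothesis ``$\bigwedge^{\dim(X)}\Omega^1_X$ ample modulo $\Delta$'' is exactly general type with $\Delta$ the augmented base locus of $K_X$. So the entire content of the theorem is concentrated in the first implication, and the rôle of the present argument is really to record a reduction.

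For the first implication I would begin by making precise what ``some tensor power of $\Omega^1_X$ is big'' means: there is an integer $m\geq 1$ such that $(\Omega^1_X)^{\otimes m}$ is a big vector bundle, i.e.\ the tautological (Serre) line bundle on its projectivization is big. I would then invoke the main positivity theorem of Campana--P\u{a}un \cite{CP19} in its contrapositive form: if $X$ is \emph{not} of general type, i.e.\ $K_X$ is not big, then no tensor power of $\Omega^1_X$ can be big. Morally, the mechanism is that of generic semipositivity: if $K_X$ fails to be big, then by the structure theory of the minimal model program (Miyaoka--Mori together with the Boucksom--Demailly--P\u{a}un--Peternell characterization of pseudoeffectivity of $K_X$ via covering families of curves), the restriction of the cotangent sheaf to a general complete intersection curve carries a quotient of non-positive slope; this slope bound propagates to every tensor power and precludes bigness of $(\Omega^1_X)^{\otimes m}$ for all $m$. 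Taking the contrapositive yields that bigness of some $(\Omega^1_X)^{\otimes m}$ implies $K_X$ big, that is, $X$ of general type.

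The main obstacle is therefore \emph{not} in the reduction but in the first implication itself, which is genuinely the deep result of \cite{CP19}; I would treat it as a black box rather than reprove it. Once general type is secured, the second step is soft: by the classical big-Picard property of varieties of general type (Kobayashi--Ochiai, equivalently the theorem of Noguchi \cite{Noguchi77} with $p=\dim(X)$ applied to $K_X$), $X$ is $\dim(X)$-Picard hyperbolic, possibly modulo the base locus of $K_X$. This places $X$ squarely among the examples of $\dim(X)$-Picard hyperbolic manifolds discussed in this section and concludes the proof.
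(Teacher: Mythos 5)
Your proposal is correct and takes essentially the same route as the paper, which offers no proof beyond exactly this decomposition: cite \cite{CP19} as a black box for ``some tensor power of $\Omega^1_X$ big $\Rightarrow$ $X$ of general type,'' and then invoke the classical Kobayashi--Ochiai fact (the $p=\dim X$ case generalized by Noguchi's theorem) for ``general type $\Rightarrow$ $\dim X$-Picard hyperbolic.'' Your hedge ``possibly modulo the base locus of $K_X$'' is harmless: when $\ell=\dim X$, a non-degenerate map $f:\BB^p\times(\BB^*)^q\to X$ has image with non-empty interior, hence is never contained in a proper closed subset of empty interior, so $\dim X$-Picard hyperbolicity modulo any such $\Delta$ coincides with $\dim X$-Picard hyperbolicity outright, and the stated conclusion follows in full.
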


\begin{remark}
A natural question is to provide negative curvature conditions (such as negative holomorphic sectional curvature for $\ell=1$) which will guarantee Picard hyperbolicity. Graham and Wu have introduced in \cite{graham1985} curvature conditions which imply $\ell$-analytic hyperbolicity. It seems unknown whether such conditions guarantee $\ell$-Picard hyperbolicity. In the singular setting, it is shown recently in \cite{DLSZ} that a projective manifold equipped with a singular metric of negative holomorphic sectional curvature (in the sense of currents) is $1$-Picard hyperbolic modulo the singular set of the metric.
\end{remark}

We now justify why the notion of intermediate Picard hyperbolicity is suitable for dealing with extension problems such as Conjecture \ref{conjecture: extension}. (See also \cite{deng2020big}[Proposition 3.4]).
\begin{proposition}
\label{prop: Picard and extension}
Let $X$ be a projective manifold which is $\ell$-Picard hyperbolic, $1\leq \ell \leq \dim X$. Let $Y$ be a projective manifold of dimension $\dim(Y) \geq \ell$, and let $H \subset Y$ be a proper closed subset. Then any non-degenerate holomorphic map
\[
f: Y \setminus H \rightarrow X
\]
extends meromorphically through $H$.
\end{proposition}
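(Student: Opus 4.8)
The plan is to reduce the global meromorphic extension to a \emph{local} extension across a smooth divisor, and there to exploit $\ell$-Picard hyperbolicity by slicing the source into $\ell$-dimensional polydisks.

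First I would record that, $X$ being projective and hence compact K\"ahler, Siu's extension theorem (\cite{siu1975extension}, Theorem 1) allows any meromorphic map into $X$ to be extended across a closed analytic subset of codimension $\geq 2$. Since meromorphic extension is a local question on $Y$, this reduces the problem to extending $f$ meromorphically across a dense open subset of the smooth locus of each codimension-one component of $H$: once $f$ is known to be meromorphic off a closed analytic set of codimension $\geq 2$ (the union of the higher-codimension part of $H$, the singular loci of its divisorial components, and the indeterminacy locus produced by the partial extension), Theorem 1 of \cite{siu1975extension} completes the extension. If $H$ itself has codimension $\geq 2$ everywhere, no divisorial extension is needed and Siu's theorem applies directly; in that case the non-degeneracy hypothesis is not used at all.

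Second, near a smooth point of a divisorial component of $H$ I would choose coordinates identifying $Y\setminus H$ locally with $\BB^{m-1}\times \BB^{*}$, $m=\dim Y$, where $\BB^{*}$ is the punctured disk transverse to $H$; the task becomes to extend $f\colon \BB^{m-1}\times \BB^{*}\to X$ meromorphically across $\BB^{m-1}\times\{0\}$. Writing $\BB^{m-1}=\BB^{m-\ell}\times \BB^{\ell-1}$ for a \emph{generic} choice of linear coordinates (keeping the transverse $\BB^{*}$ factor fixed), I would consider the projection $\pi\colon \BB^{m-\ell}\times(\BB^{\ell-1}\times\BB^{*})\to \BB^{m-\ell}$ and the slices $f(z,\cdot)\colon \BB^{\ell-1}\times \BB^{*}\to X$. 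The key point, and the main obstacle, is to check that for $z$ in a non-empty open subset of $\pi(U)$, where $U$ is the dense open locus (complement of a proper analytic subset) on which $df$ has maximal rank, these slices are again non-degenerate. This is exactly where $\ell\leq \dim X$ enters: at a maximal-rank point the kernel of $df$ has dimension $m-\min(m,\dim X)$, and a dimension count shows that a generic $\ell$-dimensional slice direction, containing the transverse direction (which generically does not lie in $\ker df$, for otherwise $f$ is locally independent of that variable and extends trivially), meets this kernel only at the origin, so that $df$ restricted to the slice remains of rank $\ell$. Granting this, $\ell$-Picard hyperbolicity together with Lemma \ref{lemma: Picard} extends each such slice meromorphically to $\BB^{\ell}$; in particular, for $w$ in a non-empty open subset of $\BB^{m-1}$, the one-variable slice $f(w,\cdot)\colon \BB^{*}\to X$ extends meromorphically across $0$.

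Finally, with meromorphic extendability of the one-variable slices over an open set of parameters in hand, I would invoke Siu's result (\cite{siu1975extension}, result (*), p.~442) exactly as in the proof of Proposition \ref{Picard: ell to ell plus one} to conclude that $f\colon \BB^{m-1}\times\BB^{*}\to X$ extends meromorphically across $\BB^{m-1}\times\{0\}$; combined with the first step this yields meromorphic extension across all of $H$. The delicate part throughout is the slicing: one must arrange the coordinates so that a positive-dimensional family of slices is simultaneously non-degenerate and not everywhere tangent to the fibres of $f$, which is precisely where the inequality $\ell\leq\dim X$ and the genericity of the slicing are essential. When $m\leq \dim X$ the kernel is trivial and every slice through a maximal-rank point is automatically non-degenerate, so the transversality issue genuinely arises only when $\dim Y>\dim X$.
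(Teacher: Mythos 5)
Your proof is correct, but it takes a genuinely different route from the paper's in both of its steps. For the reduction, the paper encloses $H$ in a hypersurface, makes it simple normal crossing by resolution of singularities, and uses bimeromorphic invariance of Picard hyperbolicity (Remark \ref{remark: bimero}) so that every point of $H$ acquires a local model $\BB^{p}\times(\BB^{*})^{q}$ with $p+q=\dim Y$; you instead extend only across the smooth locus of the divisorial part of $H$ and absorb the remaining codimension $\geq 2$ locus with Siu's Theorem 1, avoiding resolution altogether. For the local step, the paper simply iterates Proposition \ref{Picard: ell to ell plus one} to promote $\ell$-Picard hyperbolicity to $\dim(Y)$-Picard hyperbolicity and then applies the definition to the local model, whereas you slice $\BB^{m-1}\times\BB^{*}$ directly into $\ell$-dimensional slices and justify, by a genericity and dimension count, that generic slices containing the transverse direction stay non-degenerate: at a maximal-rank point $\ker df$ has dimension $m-\min(m,\dim X)\leq m-\ell$, so a generic slice direction through the transverse vector meets it trivially, once the case where $f$ is independent of the transverse variable (which extends trivially) is set aside. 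This dimension count is correct, and the extra care buys something real: the assertion inside the paper's proof of Proposition \ref{Picard: ell to ell plus one} that \emph{every} slice $f(z,\cdot)$, $z\in\pi(U)$, remains non-degenerate is false as stated once the slice dimension reaches $\dim X$ (for instance $f(z_{1},\dotsc,z_{\ell},t)=g(z_{1},t)$ with $g$ non-degenerate and $\dim X=2\leq \ell$ is non-degenerate, yet all its coordinate slices over the first variable are degenerate), and this regime is unavoidable in the present proposition whenever $\dim Y>\dim X$; a generic choice of slicing directions, which is exactly what you make, repairs this, so your argument is in this respect more robust than the chain of citations the paper relies on. The one imprecision you share with the paper is in the final appeal to Siu's result (*): the one-variable slices are only known to extend for parameters outside a measure-zero set (lines contained in, or passing through, the singular loci of the extended $\ell$-dimensional slices must be discarded), rather than for all parameters in an open set; since a positive-measure set of extendable slices suffices for Siu's statement, both versions of the argument go through.
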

\begin{proof}
As we can always take an hypersurface containing the proper closed subset $H$, it is enough to prove the result when $H$ is an hypersurface. Having in mind Remark \ref{remark: bimero}, one sees that, by using a resolution of singularities (see e.g. \cite{Lazzie1}[Theorem 4.3.1]), it is enough to prove the result when $H$ is a simple normal crossing divisor. But in this situation, the local picture around a point $h \in H$ is clear: there exists a neighborhood $U_{h} \subset Y$ of $h$
such that
\[
U_{h} \setminus (H\cap U_{h}) \simeq \BB^{p} \times (\BB^{*})^{q}
\]
for some $p,q$ such that $p+q=\dim(Y) \geq \ell$. Since $X$ is $\ell$-Picard hyperbolic, we know by Proposition \ref{Picard: ell to ell plus one} that it is also $\dim(Y) \geq \ell$-Picard hyperbolic, so that any non-degenerate holomorphic map
\[
U_{h} \setminus (H\cap U_{h}) \rightarrow X
\]
extends meromorphically through $H \cap U_{h}$. As this is true for any $h \in H$, this shows in particular the announced result.
\end{proof}

 We now end this section by proving the announced Theorem \ref{thm: mainthm3}.
\begin{proof}[Proof of Theorem \ref{thm: mainthm3}]
In view of the scheme of proof of Theorem \ref{thm: mainthm2}, it is enough to check that under the assumption of $\dim(X)$-Picard hyperbolicity, Conjecture \ref{conjecture: extension} is satisfied. But this is in particular the content of Proposition \ref{prop: Picard and extension}, so that the result follows.
\end{proof}
 
 From all of this, it is obviously tempting to make the following conjecture:
 \begin{conjecture}
 A projective manifold $X$ is $\ell$-Picard hyperbolic if and only if it is $\ell$-analytically hyperbolic.
 \end{conjecture}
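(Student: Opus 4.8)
The plan is to reduce the global extension problem to the local normal-crossing model that appears in the definition of $\ell$-Picard hyperbolicity. First I would note that extending $f$ meromorphically through $H$ is a local question near $H$, and that it suffices to treat the case where $H$ is a hypersurface: any proper closed subset is contained in a hypersurface $H'$, and a meromorphic extension of $f$ through $H'$ restricts to the desired extension through $H$, by uniqueness of meromorphic continuation on the dense open set $Y \setminus H'$ where $f$ is already holomorphic.

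Next I would apply Hironaka's resolution of singularities to reduce to a simple normal crossing configuration. Choose a proper modification $\pi : \widetilde{Y} \to Y$, biholomorphic over $Y \setminus H'$, such that $D := \pi^{-1}(H')$ is a simple normal crossing divisor, and set $g := f \circ \pi$ on $\widetilde{Y} \setminus D$. Then $g$ is still non-degenerate, since $\pi$ is biholomorphic away from the exceptional locus. In the spirit of Remark~\ref{remark: bimero}, any meromorphic extension of $g$ to $\widetilde{Y}$ will descend, after composing with the birational inverse $\pi^{-1} : Y \dashrightarrow \widetilde{Y}$, to a meromorphic extension of $f$ to $Y$; this is legitimate precisely because we only seek a \emph{meromorphic} extension.

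It then remains to extend $g$ across $D$. Around any point of $D$, the local picture is explicit: there is a neighborhood $U \subset \widetilde{Y}$ with $U \setminus D \simeq \BB^p \times (\BB^*)^q$ for some $p, q$ with $p + q = \dim(Y) \geq \ell$. Since $X$ is $\ell$-Picard hyperbolic, Proposition~\ref{Picard: ell to ell plus one} applied inductively shows that $X$ is $\dim(Y)$-Picard hyperbolic. Before invoking this I must check that $g|_{U \setminus D}$ is non-degenerate: as $g$ is non-degenerate on the connected manifold $\widetilde{Y} \setminus D$, its degeneracy locus is a proper analytic subset, hence nowhere dense, so the open set $U \setminus D$ meets the non-degenerate locus. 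The $\dim(Y)$-Picard hyperbolicity of $X$ then provides a meromorphic extension of $g$ across $D \cap U$.

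Finally I would glue these local extensions. They agree with $g$, hence with one another, on the overlaps, so by uniqueness of meromorphic continuation their graphs patch together with the graph of $g$ over $\widetilde{Y} \setminus D$ into a closed analytic subvariety of $\widetilde{Y} \times X$, i.e. the graph of a meromorphic map $\widetilde{Y} \dashrightarrow X$; descending as above yields the desired map $Y \dashrightarrow X$. The step I expect to require the most care is the reduction: verifying that the meromorphic extension on $\widetilde{Y}$ genuinely descends to one on $Y$ extending the original $f$, and that the simple normal crossing local model realizes $U \setminus D$ as $\BB^p \times (\BB^*)^q$ with the correct dimension count $p + q = \dim(Y)$. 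Once these are secured, the Picard-hyperbolicity bootstrap and the gluing are routine.
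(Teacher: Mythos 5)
The statement you were asked to prove is the paper's closing conjecture: for a projective manifold $X$, $\ell$-Picard hyperbolicity is equivalent to $\ell$-analytic hyperbolicity. Your proposal never engages with that equivalence. It takes ``$X$ is $\ell$-Picard hyperbolic'' as the standing hypothesis and concludes that every non-degenerate holomorphic map $f\colon Y \setminus H \to X$ extends meromorphically through $H$; nowhere does the Kobayashi--Eisenmann pseudo-metric $e_{\ell}$, a Finsler metric, or the notion of $\ell$-analytic hyperbolicity appear. What you have written is, in substance, the paper's own proof of Proposition \ref{prop: Picard and extension} (reduction to a hypersurface, resolution of singularities to a simple normal crossing divisor, the local model $U \setminus D \simeq \BB^{p}\times(\BB^{*})^{q}$ with $p+q=\dim Y \geq \ell$, the bootstrap from $\ell$ to $\dim Y$ via Proposition \ref{Picard: ell to ell plus one}, and gluing). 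That proposition is a consequence of $\ell$-Picard hyperbolicity; it is not the conjectured equivalence, and proving it establishes neither implication of the conjecture.

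Concretely, a proof of the conjecture would require two arguments that are entirely absent from your proposal: (i) that the extension property for maps $\BB^{p}\times(\BB^{*})^{q} \to X$ forces $e_{\ell}$ to dominate a continuous Finsler metric on pure $\ell$-vectors (Picard implies analytic), and (ii) that degeneracy of $e_{\ell}$ --- i.e., a sequence of maps $\BB^{\ell}\to X$ with unbounded derivative in the sense of Lemma \ref{lemma: basic lemma1} --- produces a non-degenerate map from some $\BB^{p}\times(\BB^{*})^{q}$ admitting no meromorphic extension (analytic implies Picard). Note also that there is no proof in the paper to compare against: the statement is left as an open conjecture, with the explicit remark that neither direction seems easy and that even the weaker implication from $(\ell-1)$-analytic hyperbolicity to $\ell$-Picard hyperbolicity is only a feasible-looking first step. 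So, whatever the merits of your argument as a rederivation of Proposition \ref{prop: Picard and extension}, as a proof of the statement in question it has a fatal gap: it proves a different statement.
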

However, none of the direction seems easy at first. A first step that seems feasible would be to prove that $(\ell-1)$-analytic hyperbolicity implies $\ell$-Picard hyperbolicity.

\bibliography{refsperiod}{}
\bibliographystyle{alpha}

\end{document}